\documentclass[10pt]{article}
\usepackage{amsmath,amssymb,amsthm}

\title{Nonabelian Rauzy-Veech Renormalization}
\author{Dmitri Scheglov}

\theoremstyle{plain}
\newtheorem{Lemma}{Lemma}[section]
\newtheorem{Theorem}{Theorem}[section]

\newtheorem{defn}{Definition}   
\begin{document}

\maketitle
\setlength{\parindent}{0pt}

\begin{abstract}
\noindent

For any Lie group $G$ we introduce a renormalization map $\mathcal{R}$ on the space of simple $G$-extensions of interval exchange transformations. Using  $\mathcal{R}$ we prove weak mixing and cohomological non-equivalence for typical simple compact $G$-extensions of IETs. This extends a well-known result of Avila and Forni for $G=U(1)$ to any compact connected Lie group $G$. 
\

It is also the first result on nonabelian extensions of interval exchange transformations.
\end{abstract}

\section{Introduction}
\subsection{ \textit{The results and aim of the paper}}

The aim of this paper is two-fold. First we introduce a nonabelian version $\mathcal{R}$ of Rauzy-Veech induction (or Rauzy-Veech renormalization) with suitable modifications known also as Rauzy-Veech-Zorich induction.
\

\

 Second we apply $\mathcal{R}$ to prove ergodic properties of $G$-extensions of Interval Exchange Transformations (from now and further IETs), where $G$ is any compact connected Lie group. More precisely our main application of $\mathcal{R}$ is the following theorem:

\begin{Theorem}[Weak mixing] For a typical IET $T=(\lambda, \pi)$, not isomorphic to an irrational rotation, and a typical simple function $\phi:[0, 1]\rightarrow G$, the $G$-extension $T_{\phi}:[0, 1]\times G\rightarrow [0, 1]\times G$, given by formula $T_{\phi}(x,y)=(Tx, \phi(x)y)$, is weakly mixing. If $T=(\lambda, \pi)$ is a rotation, then $T_{\phi}$ is typically ergodic.
\end{Theorem}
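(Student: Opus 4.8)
\emph{Proof plan.} The plan is to reduce weak mixing of $T_\phi$ to an eigenvalue--exclusion statement for the unitary cocycles attached to the irreducible representations of $G$, and then to push that exclusion through the renormalization map $\mathcal{R}$.

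\textbf{Step 1: Peter--Weyl reduction.} First I would decompose $L^2([0,1]\times G)=\bigoplus_{\rho\in\hat G}L^2([0,1])\otimes V_\rho\otimes V_\rho^*$, the sum over the unitary dual $\hat G$. Since $T_\phi$ commutes with the right $G$-action, each eigenspace is a sum of $\rho$-isotypic pieces, so it suffices to understand eigenfunctions inside one $\rho$-component. Writing such a function as $f(x,y)=\mathrm{tr}\!\bigl(\rho(y)A(x)\bigr)$ and imposing $f\circ T_\phi=\mu f$, $\mu\in S^1$, turns the eigenvalue problem into the twisted cocycle equation $A(Tx)=\mu\,A(x)\rho(\phi(x))^{-1}$ for a nonzero measurable $A:[0,1]\to\mathrm{End}(V_\rho)$, and conversely every such $A$ produces an eigenfunction. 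Hence $T_\phi$ is weakly mixing iff $T$ is weakly mixing \emph{and} for every nontrivial $\rho$ and every $\mu\in S^1$ there is no nonzero $A$; when $T$ is an irrational rotation, the claimed ergodicity of $T_\phi$ is exactly this statement with $\mu=1$. For a typical non-rotation $T$, weak mixing of $T$ itself is the theorem of Avila--Forni, so the whole problem is the exclusion of the sections $A$.

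\textbf{Step 2: Renormalization.} Next I would read $\mathcal{R}$ as acting on triples $(\lambda,\pi,\phi)$: the $\phi$-coordinate transforms by the Rauzy--Veech words, so after $n$ steps the simple function $\phi^{(n)}$ equals on the interval $I^{(n)}_\alpha$ a group element $\Phi^{(n)}_\alpha\in G$, namely the ordered product of the original $\phi$-values along the corresponding orbit segment (an explicit word in $(g_\beta)_{\beta\in\mathcal A}$ determined by the Rauzy--Veech matrix $B^{(n)}=B_n\cdots B_1$), while the Rohlin tower over $I^{(n)}_\alpha$ has some height $q^{(n)}_\alpha$. Since an eigenfunction is reconstructed from its restriction to $I^{(n)}$ by applying the twisted Birkhoff words up the towers, the existence of $A$ is equivalent, via a Veech-type criterion, to a compatible family of base sections along the $\mathcal{R}$-orbit; consistency between successive levels forces the near-triviality condition $\bigl\|\mu^{q^{(n)}_\alpha}\rho(\Phi^{(n)}_\alpha)\,a^{(n)}_\alpha-a^{(n)}_\beta\bigr\|\to 0$ for the winner/loser pair $(\alpha,\beta)$ at step $n$, with $a^{(n)}_\alpha\in\mathrm{End}(V_\rho)$ not all zero. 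Thus the $\rho$-twist of $\mathcal{R}$ is a $U(V_\rho)$-valued cocycle built from random products of the matrices $\rho(g_\beta)$ along Rauzy--Veech orbits, the base driven by the Masur--Veech measure and the fibre $G^{\mathcal A}$ by Haar measure.

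\textbf{Step 3: Exclusion, and the main obstacle.} Finally I would prove that for Haar-almost-every $(g_\beta)_{\beta\in\mathcal A}$ and almost every Rauzy--Veech orbit the near-triviality condition fails whenever $\rho$ is nontrivial (and for every $\mu\in S^1$); a Fubini argument over the product of the Masur--Veech and Haar measures then yields the theorem, and the rotation case is the specialization $\mu=1$ with the Gauss map as renormalization. The ingredients are: a typical tuple $(g_\beta)$ generates a dense subgroup of $G$ because $G$ is compact \emph{and connected}, so the random products $\rho(\Phi^{(n)}_\alpha)$ equidistribute in $U(V_\rho)$; the hyperbolicity theory of the Rauzy--Veech/Kontsevich--Zorich cocycle (Forni, Avila--Viana), together with the growth of $q^{(n)}_\alpha$ and genericity of the combinatorial path, provides the independence and the time scales; and a large-deviation estimate then shows the synchronization is violated along a positive-density set of times with positive probability, hence almost surely. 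The genuinely new difficulty compared with the abelian case $G=U(1)$ of Avila--Forni is exactly this step: the cocycle can no longer be diagonalized, so the hard part will be to control the \emph{nonabelian} random products $\rho(\Phi^{(n)}_\alpha)$ uniformly enough, coupling equidistribution of a random walk on the compact group $U(V_\rho)$ with the hyperbolic dynamics of the base renormalization.
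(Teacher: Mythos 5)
Your Steps 1 and 2 track the paper's strategy: the Peter--Weyl decomposition is in substance the Keynes--Newton criterion (Theorems 4.1--4.2 of the paper), and the passage to a near-invariance condition along the $\mathcal{R}$-orbit is the content of Theorem 4.4, obtained from Veech's properties $P_1(\epsilon,m)$, $P_2(\epsilon,m)$ and a Lusin-type approximation; the conclusion there is that all the renormalized group elements $g^m_k$ nearly fix a common unit vector $w^m$. The genuine gap is Step 3. You assert that the exclusion will follow from equidistribution of the nonabelian random products $\rho(\Phi^{(n)}_\alpha)$ in $U(V_\rho)$, hyperbolicity of the Kontsevich--Zorich cocycle, and a large-deviation estimate, and you yourself flag the coupling of these as ``the hard part'' --- but you do not supply it, so the argument stops exactly where the new nonabelian difficulty lives. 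The paper needs none of that machinery: the fiber maps of $\mathcal{R}$ are the Rauzy maps $A,B$ on $G^n$, which are compositions of Nielsen moves and hence preserve Haar measure (Lemma 3.1); the bad set $S\subseteq G^n$ of tuples admitting a common fixed unit vector is compact of Haar measure zero, because a typical pair in a compact connected Lie group generates a dense subgroup and $\Theta$ is irreducible (Lemma 4.1); and the elementary Lemma 4.3 (under any sequence of measure-preserving maps, the set of points eventually trapped in a set $A$ has measure at most $\mu(A)$) shows that the set of $g$ whose fiber orbit converges to $S$ is null. No equidistribution, no Oseledets theory, no large deviations.

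Two further omissions. First, your quantifier ``for every $\mu\in S^1$'' ranges over an uncountable set, so an almost-everywhere exclusion for each fixed $\mu$ does not add up to what you need. The paper sidesteps this for $d\geq 2$ (the Keynes--Newton criterion it invokes requires only the untwisted equation in that case), and for $d=1$ it must import the Avila--Forni bound $\dim_H W\leq g(\pi)$ on the exceptional set, so that the union of $W$ over the one-parameter family of twists still has Hausdorff dimension at most $g(\pi)+1<n$ and hence Lebesgue measure zero (Theorems 4.6--4.8). Your plan does not address this issue at all. Second, the rotation case is not ``the specialization $\mu=1$ with the Gauss map'': Avila--Forni does not treat $n=2$, and the paper handles it by a separate soft argument (Theorem 4.10: the generalized eigenvalues form a Borel subgroup of $\mathbb{T}^n$, which is either null or all of $\mathbb{T}^n$, and the latter would force an uncountable discrete spectrum), combined again with the fiber-bundle pullback through the one-dimensional representations.
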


Here $\lambda=(\lambda_1,\dots, \lambda_n)$, $\lambda_1+\dots + \lambda_n=1$ is a vector of interval lengths and $\phi:[0, 1]\rightarrow G$ is a function, constant on each of the intervals. The word \textit{typical} means normalized Lebesgue measure for $\lambda$ and normalized Haar measure for $\phi$.
\

\

Theorem 1.1 extends to any compact connected Lie group $G$ the result of Avila and Forni $[1]$ which crucial part, the cohomological equation, can be reinterpreted (via Anzai criterion) as a weak-mixing of a typical $U(1)$-extension over IET. The result of Avila and Forni not only gives a full measure of weakly-mixing $U(1)$-extensions, but also estimates the \textit{Hausdorff dimension} of exceptions. In case of a more general Lie group $G$ it is not (yet?) possible to obtain such a delicate information due to the presence of \textit{higher-dimensional irreducible representations}, which are absent in case of $G=U(1)$. 
\

\

As a consequence we obtain an almost sure convergence result for a stochastic process on the Lie group $G$, generated by a typical IET:

\begin{Theorem}[\textit{Convergence to Haar measure}] For a typical IET $T=(\lambda, \pi)$ of $n\geq 2$ intervals and a typical $n$-tuple $A=(A_1,\dots, A_n)\in G^n$, for almost all $x\in[0, 1]$ the sequence of discrete measures ${\mu}^k(A,x)$ converges to the normalized Haar measure $\nu$ on $G$.
\end{Theorem}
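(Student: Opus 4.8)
The plan is to derive Theorem 1.2 from the \emph{ergodicity} contained in Theorem 1.1 by a soft argument; the real content sits in Theorem 1.1, and what is left is an application of Birkhoff's ergodic theorem, Fubini's theorem, and the bi-invariance of Haar measure on the compact group $G$.

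First I would observe that a simple function $\phi:[0,1]\to G$ (constant on the intervals $I_1,\dots,I_n$ of $T$) is precisely the datum of the tuple $A=(A_1,\dots,A_n)\in G^n$ with $A_j=\phi|_{I_j}$, and that a \emph{typical} $A$ (product Haar measure on $G^n$) is by definition a \emph{typical} $\phi$. Under this identification $\mu^k(A,x)$ is the occupation measure of the $G$-valued process $i\mapsto g_i(x)$, where $g_i(x)=\phi(T^{i-1}x)\cdots\phi(x)$ is the fibre coordinate of $T_\phi^{\,i}(x,e)$. Then I would invoke Theorem 1.1: for $\lambda$ in a full-measure subset of the simplex and $\phi$ (equivalently $A$) in a full-Haar-measure subset of $G^n$, the extension $T_\phi$ is ergodic with respect to $\mathrm{Leb}\times\nu$ --- weakly mixing, hence ergodic, when $T$ is not a rotation, and ergodic by the second clause of Theorem 1.1 when it is; in either case $T_\phi$ is ergodic for a.e.\ pair $(\lambda,A)$. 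By Fubini this holds for a.e.\ $\lambda$ and a.e.\ $A$, so I fix such a pair.

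Next, fix a countable dense family $\{f_m\}\subset C(G)$ and apply Birkhoff's ergodic theorem to $(T_\phi,\mathrm{Leb}\times\nu)$ and the functions $F_m(x,y)=f_m(y)$. Since $F_m(T_\phi^{\,i}(x,y))=f_m(g_i(x)y)$, this produces a $(\mathrm{Leb}\times\nu)$-full set $E\subset[0,1]\times G$ on which $\langle (R_y)_*\mu^k(A,x),\,f_m\rangle=\tfrac1k\sum_{i<k}f_m(g_i(x)y)\to\int_G f_m\,d\nu$ for every $m$, where $R_y(g)=gy$. By Fubini again, for Lebesgue-a.e.\ $x$ the section $E_x=\{y:(x,y)\in E\}$ has full $\nu$-measure; choose any $y_0=y_0(x)\in E_x$. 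Density of $\{f_m\}$ in $C(G)$ then gives $(R_{y_0})_*\mu^k(A,x)\to\nu$ weakly, and applying the weak-continuous map $(R_{y_0^{-1}})_*$, which fixes $\nu$ by right-invariance of Haar measure and satisfies $R_{y_0^{-1}}\circ R_{y_0}=\mathrm{id}$, yields $\mu^k(A,x)=(R_{y_0^{-1}})_*(R_{y_0})_*\mu^k(A,x)\to(R_{y_0^{-1}})_*\nu=\nu$, which is the assertion of Theorem 1.2.

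The only subtlety --- and it is minor --- is that the process $i\mapsto g_i(x)$ corresponds to the orbit of the single fibre point $(x,e)$, which is $(\mathrm{Leb}\times\nu)$-null, so the conclusion cannot be read directly off Birkhoff's theorem and must be transported from a Haar-generic fibre point $y_0$ by a translation; this is exactly the step that uses compactness of $G$ (bi-invariance of $\nu$). One should also take the dense family $\{f_m\}$ countable so that a single Lebesgue-null exceptional set of $x$ works for all test functions simultaneously. Beyond this there is no obstacle: all the difficulty of Theorem 1.2 is already spent in proving Theorem 1.1.
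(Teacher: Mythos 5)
Your proof is correct and follows the same route the paper takes: the paper states this result (as Theorem 4.14) as ``an immediate consequence of ergodicity of $T_\phi$'' established in Section 4, giving no further details. Your argument supplies exactly the details the paper leaves implicit, and your handling of the null fibre $\{(x,e)\}$ by passing to a Haar-generic $y_0$ and translating back by $R_{y_0^{-1}}$ is the right way to make the Birkhoff-theorem step rigorous.
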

 
What Theorem 1.2 says is that for a typical IET $T=(\lambda, \pi)$ and a typical $x\in[0, 1]$ if one assigns symbols $1, \dots, n $ to the exchanged intervals  then one has an infinite coding word $ w(x)=w_1w_2 \dots w_k\dots$ in the alphabet $\{ 1,\dots,n \}$. Now an $n$-tuple $A$ produces a sequence ${\mu}^k(A,x)$ of finitely supported normalized measures on $G$, given by formula ${\mu}^k(A,x)=\frac{1}{k}({\delta}_{A_{w_1}}+{\delta}_{A_{w_2}\cdot A_{w_1}}+\dots+{\delta}_{A_{w_k}\cdots A_{w_1}})$. Theorem 1.2 asserts that ${\mu}^k(A,x)\rightarrow\nu$ for a typical pair $(A, x)$.
\

\

One more application of $\mathcal{R}$ cohomologically distinguishes different simple $G$-extensions over an IET $T$:

\begin{Theorem}$\{$Typical cohomological non-equivalence$\}$. Let $G$ be a compact connected Lie group. Then for a typical IET $T=(\lambda, \pi)$ and a typical pair of simple functions $\phi, \psi:[0, 1]\rightarrow G$, the $G$-extensions $T_{\phi}$ and $T_{\psi}$ are not measurably cohomologous.
\end{Theorem}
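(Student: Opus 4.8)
The plan is to derive Theorem 1.3 from the weak-mixing Theorem 1.1 applied to the compact connected Lie group $G\times G$. The mechanism is a dichotomy: if $\phi$ and $\psi$ are measurably cohomologous then the $(G\times G)$-extension $T_{(\phi,\psi)}$ fails to be ergodic, whereas Theorem 1.1 guarantees that for typical data $T_{(\phi,\psi)}$ is ergodic --- indeed weakly mixing off the rotation locus. Throughout I assume $G\neq\{e\}$, since otherwise the statement is vacuously false.

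First I would record the cohomology obstruction. Unwinding the definition, $T_\phi$ and $T_\psi$ are measurably cohomologous precisely when there is a measurable $u:[0,1]\to G$ with $\psi(x)=u(Tx)\,\phi(x)\,u(x)^{-1}$ for a.e. $x$; equivalently the map $J(x,g,h)=(x,g,u(x)h)$ is a measure-preserving isomorphism from the ``diagonal'' extension $T_{(\phi,\phi)}$ onto $T_{(\phi,\psi)}$. Since $(x,g,h)\mapsto g^{-1}h$ is a non-constant $T_{(\phi,\phi)}$-invariant function (non-constant because $G\neq\{e\}$), it follows that $(x,g,h)\mapsto g^{-1}u(x)^{-1}h$ is a non-constant $T_{(\phi,\psi)}$-invariant function --- a one-line computation with the coboundary relation verifies the invariance directly. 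Hence cohomological equivalence of $\phi,\psi$ implies $T_{(\phi,\psi)}$ is not ergodic.

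Next I would match up the genericity. The group $G\times G$ is again compact and connected, and $(\phi,\psi)$ is a simple function into $G\times G$ whenever $\phi,\psi$ are simple into $G$. Crucially, the normalized Haar measure on $(G\times G)^n$ is exactly the product of the two copies of normalized Haar measure on $G^n$ that govern $\phi$ and $\psi$ independently, so a ``typical pair $(\phi,\psi)$'' is literally a ``typical simple function into $G\times G$.'' Applying Theorem 1.1 to $G\times G$: for typical $T=(\lambda,\pi)$ not isomorphic to an irrational rotation (the generic situation, e.g. whenever $n\geq 3$ and $\pi$ is a non-rotation type), $T_{(\phi,\psi)}$ is weakly mixing for typical $(\phi,\psi)$; and on the rotation locus it is typically ergodic. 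Either way $T_{(\phi,\psi)}$ is ergodic, and a Fubini argument over $G^n\times G^n$ gives: for typical $T$ and typical $(\phi,\psi)$, $T_{(\phi,\psi)}$ is ergodic. Combined with the previous step, $\phi$ and $\psi$ are then not cohomologous, which is the assertion.

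I do not expect a genuinely hard step here: the entire analytic weight sits in Theorem 1.1, and once that is available for arbitrary compact connected groups --- hence for $G\times G$ --- Theorem 1.3 is a formal consequence. The only points needing care are bookkeeping ones: (i) checking that the product of two Haar-typical $G$-cocycles really is a Haar-typical $(G\times G)$-cocycle, so that the reduction to Theorem 1.1 is legitimate; (ii) handling the rotation / non-rotation clauses of Theorem 1.1 according to the combinatorics of $T$; and (iii) excluding the trivial group. A quantitative refinement (bounding the Hausdorff dimension of the exceptional set of pairs) would inherit exactly the obstruction from higher-dimensional irreducible representations noted after Theorem 1.1, and I would not attempt it.
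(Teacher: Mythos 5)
Your proposal is correct, but it takes a genuinely different route from the paper's. The paper proves the statement (as Theorem 5.2) by a direct renormalization argument: Theorem 5.1 shows that a measurable solution $f$ of the cohomological equation forces, along a Veech subsequence of times for $\mathcal{R}$, the renormalized values $g_1^m$ and $h_1^m$ to become asymptotically conjugate; Lemma 5.1 shows via Sard's theorem that the locus of conjugate pairs in $G\times G$ has Haar measure zero; and since the Rauzy maps preserve Haar measure (Lemma 4.1), the ``eventually stays in $A$'' Lemma 4.3 shows that the set of pairs $(g,h)$ whose renormalized orbit converges to this compact null set is itself null. You instead reduce to Theorem 1.1 applied to the compact connected group $G\times G$: cohomologous cocycles make the fibered product $T_{(\phi,\psi)}$ non-ergodic --- your invariant function $(x,g,h)\mapsto g^{-1}u(x)^{-1}h$ does the job, and the identification of Haar-typical pairs $(\phi,\psi)$ with Haar-typical $(G\times G)$-valued simple functions is legitimate since the Haar measure on $(G\times G)^n$ is exactly the product measure on $G^n\times G^n$ --- while Theorem 1.1 makes $T_{(\phi,\psi)}$ typically ergodic in both the rotation and non-rotation cases. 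Your route is shorter and exhibits the theorem as a formal corollary of Theorem 1.1; the paper's route is independent of the Keynes--Newton criterion and of the representation-theoretic analysis of Section 4, and isolates a concrete obstruction (asymptotic conjugacy of the renormalized cocycle values) that is the natural starting point for the noncompact version raised in Problem 4, where no Haar probability measure and no Keynes--Newton criterion are available. Your caveat that the statement is vacuous (indeed false) for the trivial group is a fair edge-case observation that the paper does not make.
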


The extensions $T_{\phi}$ and $T_{\psi}$ are said to be \textit{measurably cohomologous} if there exists a measurable function $f:X\rightarrow G$ such that the following cohomological equation holds almost everywhere on $X$: $f(Tx)\cdot\phi(x)=\psi(x)\cdot f(x)$.  Existence of a measurable cohomology implies that $T_{\phi}$ and $T_{\psi}$ are isomorphic as measure-preserving maps. More precisely the  map 
$F: X\times G\rightarrow X\times G$, given by equation $F(x,g)=(x, f(x)g)$ satisfies $F\circ T_{\phi}= T_{\psi}\circ F$. 
\

\

Theorems 1.1, 1.2, 1.3 are first results on the ergodic theory of $G$-extensions of IETs for a nonabelian group $G$. The previous results only treat the abelian cases.
\

\

After the first draft of this paper was written, it was immediately suggested by Forni that $\mathcal{R}$ should be intimately related to Forni-Goldman's construction $[5]$ of Teichmuller flow on the variety of representations of the surface group, and so $\mathcal{R}$ is interesting by itself. As following study has shown the renormalization map $\mathcal{R}$ and Forni-Goldman's Teichmuller flow on character variety of a \textit{punctured} surface of genus $g$ are related analogously to how usual Rauzy-Veech induction is related to classical Teichmuller geodesic flow via Veech Zippered Rectangle construction. 
\

As this geometric continuous interpretation of $\mathcal{R}$ is not used in this paper we refer an interested reader to $[6]$ and $[7]$ for more details. However one should notice that, except the $U(1)$ case, Forni-Goldman Teichmuller flow \textit{can not} be applied directly to $G$-extensions of IETs. The main reason is that unlike IETs, the $G$-extensions of IETs are \textit{not}  first return maps of some naturally defined flows on $G$-bundles over flat surfaces.  So it is absolutely necessary to introduce $\mathcal{R}$ and study its ergodic properties.
\

\subsection{ \textit{Overview of the previous results}}
The Rauzy-Veech induction was originally introduced by Rauzy in $[10]$, and later extensively studied by Veech in a series of papers$[15]$,$[16]$,$[17]$. As it turned out the original Rauzy-Veech induction possessed an infinite absolutely continuous invariant measure $\mu$ and so it was later modified by Zorich $[20]$ to Rauzy-Veech-Zorich induction with a finite absolutely invariant measure $\mu$. 
\

\

The applications of Rauzy-Veech and Rauzy-Veech-Zorich induction to the study of IETs and more generally flows on compact surfaces are too numerous to be all stated here, but the one of particular interest to us is the result by Avila and Forni $[1]$, establishing weak-mixing for typical IETs and translation flows. 
\

\

The crucial part of $[1]$ deals with a particular cohomological equation and establishes an upper bound on the Hausdorff dimension of exceptional parameters. Similar cohomological equation appears when we deal with one-dimensional representations of $G$. We notice however that for example in case of semi-simple $G$ there are no one-dimensional representations, and so the \textbf{essential new ingredient} of our approach is dealing with \textit{higher-dimension representations of $G$}, which is the issue not appearing in $[1]$ as all the irreducible representations of $U[1]$ are one-dimensional.
\

\

  Given a map $T: X\rightarrow X$, which preserves a probability measure $\mu$  and a family of maps $S_x:Y\rightarrow Y$ , each preserving a probability measure $\nu$ on the measurable space $Y$, one has a skew product transformation $T\rtimes S_x: X\times Y\rightarrow X\times Y$ defined by formula $T\rtimes S_x(x,y)=( T(x), S_x(y))$ which, if measurable, preserves a measure $\mu\times\nu$.
\

\

If $G$ is a compact topological group with the Haar measure $\nu$ then one can take a measurable function $\phi: X\rightarrow G$ and form a skew product $T_{\phi}(x,y)=(Tx,\phi(x)y)$  which in this special case is called \textit{$G$-extension} of $T$ (often, \textit{skew-shift} over $T$).  
\

\

For a comprehensive survey of ergodic theory of general $G$-extensions we refer an interested reader to Parry and Pollicott$[9]$. Some more references can also be found in Lind$[7]$. Regarding more specific case, when the base map $T$ is an interval exchange transformation, one has to separate the case when $T$ is an irrational rotation, and when $T$ is an IET of $n\geq3$ intervals.
\

\

For results about skew products over irrational rotations we refer to the works of Pask$[10]$, Conze, Piekniewska$[4]$. See also Conze and Fraczek $[3]$ for more comprehensive list of references about this type of skew products. 
\

\

Regarding the results for skew-product over IETs, except already mentioned result $[1]$, Conze and Fraczek$[3]$ studied ergodic properties of cocycles with values in some locally compact abelian groups. Fraczek and Ulcigrai $[5]$ proved some non-ergodicity results for specific $\mathbb{Z}$-valued cocycles arising in the study of billiards with infinite periodic obstacles. Recently Chaika and Robertson$[2]$ have shown ergodicity of piecewise constant cocycles with values in $\mathbb{R}$ for some special class of interval exchange transformations, which they call linearly recurrent. And the most recent work of Forni  $[5]$ establishes \textit{effective} weak mixing for typical $S^1$-extensions of flows on flat surfaces, which can be thought of as an effective continuous counterpart of our results, when $G=U(1)$ as the suitable first return map of such a flow is a $U(1)$- extension of an IET.

\subsection{Checklist of notations.}

Throughout the paper we will \textit{persistently} use the following notations for the convenience of a reader:
\

\

$n$, the number of interchanged intervals
\

$\lambda=({\lambda}_1,\dots, {\lambda}_n)$, the vector of interval lengths
\

${\mathbb{R}}^n_+=\{(x_1,\dots, x_n)\in{\mathbb{R}}^n| x_i>0, 1\leq i\leq n\}$, the set of positive vectors
\

$S_n$, the symmetric group on $n$ symbols
\

$S^0_n$, the subset of $S_n$ consisting of irreducible permutations
\

$I=(0, |\lambda|)$, the interval
\

$I_k$, $1\leq k\leq n$, the interchanged subintervals of $I$
\

$T:I\rightarrow I$, an interval exchange transformation or
\

$T:X\rightarrow X$, an automorphism of the probability space $(X, \mu)$ or 
\

$T: H\rightarrow H$, a bounded linear operator on the Hilbert space $H$
\

${\Delta}_{n-1}\subseteq {\mathbb{R}}^n$, the $(n-1)$-dimensional simplex of normalized interval exchange transformations
\

$G$, compact connected Lie group
\

$\nu$, normalized Haar measure on $G$ or on $G^n$
\

$\phi$, $\psi:I\rightarrow G$, simple functions
\

$T_{\phi}$ or $T_{\psi}: I\times G\rightarrow I\times G$, the simple $G$-extensions over $T$
\

$\Upsilon$, Rauzy-Veech Renormalization map
\

$I^m$, the $m$-th $\Upsilon$-induced interval 

$\mathcal{R}$, the Extended Rauzy-Veech Renormalization map
\

$\Omega$, full measure set of IETs, satisfying Veech Properties $P_1(\epsilon, m)$ and $P_2(\epsilon, m)$
\

$S$, a compact subset of measure zero of $G^n$ or of $G^n\times G^n$ with specific properties
\

$U(d)$, the group of unitary matrices of dimension $d$
\

$\Theta: G\rightarrow U(d)$, $d$-dimensional unitary irreducible representation of $G$
\

$\gamma:G\rightarrow U(1)$, one-dimensional nontrivial unitary representation of $G$
\

$B(H)$, the space of bounded linear operators on the Hilbert space $H$

\

\textbf{Remark on the use of subscripts and superscripts.} In this paper the \textit{subscript} index means a fixed non-asymptotic index usually with the range $[1, n]$. For example $I_k$ means $k$-th exchanged interval, $g_k$ means $k$-the component of a vector $g=(g_1,\dots, g_n)\in G^n$. Whereas \textit{superscript} index means asymptotic index with unbounded range. For example $I^m$ means the $m$-th $\Upsilon$-induced interval, $T^m$ means the $m$-th pover of $T$ etc. The same rule applies when \textit{both} notations are used. For example $I^m_k$ means $k$-th subinterval of $I^m$.

\section{Acknowledgements}
We would like to thank Pablo Carrasco for several fruitful and enlightening conversations during the work on this paper.
\

\

We would like to thank Giovanni Forni for explaining to us the delicate aspects of his work $[1]$ with Avila regarding the upper bounds on the  Hausdorff dimension and the structure of the exceptional set, interest to the paper and many fruitful discussions, especially about geometric interpretation of 
$\mathcal{R}$. 
\

\

We would like to thank Andrey Gogolev for several inspiring and enlightening discussions during the work on this paper.
\

\

We would like to thank Nikolay Goussevski for many enlightening discussions and constant support during the work on this paper.
\

\

We would like to thank Narutaka Ozawa for giving us the idea on the proof of Lemma 7.2

\section{The Extended Rauzy-Veech Renormalization}

\subsection{ \textit{Interval Exchange Transformations}}

 Throughout the paper we will persistently use the following notations for the convenience of the reader. Let $n\geq 2$ and $\lambda=({\lambda}_1,...,{\lambda}_n)\in{\mathbb{R}}^n_+$ be a length vector with all positive coordinates. Let $S_n$ be a symmetric group on $n$ symbols and $\pi\in S_n$. A permutation $\pi$ is called \textit{irreducible} if for any $k, 1\leq k< n$, $\pi\{1,...,k\}\neq\{1,...,k\}$. 
  $S^0_n$ denotes the set of all irreducible permutations on $n$ symbols. The cut points are defined as $\beta_0=0$ and $\beta_k=\sum\limits_{i=1}^k\lambda_i$, $1\leq k\leq n$. Also the intervals are defined as $I_k=[{\beta}_{k-1}, {\beta}_k)$, $1\leq k\leq n$ and $I$. 
  
  \begin{defn}
  An \textbf{Interval Exchange Transformation} (from now and further \textbf{IET}) defined by pair $(\lambda, \pi)$ is a transformation 
  $T:[0, |\lambda|]\rightarrow [0,|\lambda|]$ interchanging intervals $I_k$ as solid segments, with respect to the permutation $\pi$. 
  \end{defn}
  
  Any IET is a piecewise isometry, preserving Lebesgue measure on $[0, |\lambda|]$.
 \
 
 \
 
\subsection{ \textit{Rauzy-Veech Induction}}

Given an interval exchange $T=(\lambda, \pi)$ of n intervals such that ${\lambda}_n\neq{\lambda}_{{\pi}^{-1}(n)}$ we have two possibilities:
\

\

$1)$ \textbf{Rauzy rule A.} ${\lambda}_n<{\lambda}_{{\pi}^{-1}(n)}$. In this case put $\overline{I}=[0, |\lambda|-{\lambda}_n]$
\

\

$2)$ \textbf{Rauzy rule B.} ${\lambda}_n>{\lambda}_{{\pi}^{-1}(n)}$. In this case put $\overline{I}=[0, |\lambda|-{\lambda}_{{\pi}^{-1}(n)}]$
\

\

The first return map of $T$ on $\overline{I}$ is again an IET $\overline{T}=(\overline{\lambda}, \overline{\pi})$ of $n$ intervals. The new permutation  depends only on A or B and is denoted $A\pi$ or $B\pi$.
\

\
\begin{defn} The \textbf{Rauzy-Veech induction} is the map $\Upsilon$ defined on the full measure subset of ${\mathbb{R}}^n_+\times S^0_n$ by formula $\Upsilon (\lambda, \pi)=(\overline{\lambda}, \overline{\pi})$
\end{defn}
\

\subsection{ \textit{G-extensions}}
\

Let $T:X\rightarrow X$ be a measure preserving transformation of a probability space $(X,\mu)$, $G$ be a compact topological group with the normalized Haar measure $\nu$ and $\phi: X\rightarrow G$ be a measurable function. 

\begin{defn}
The \textbf{G-extension} is a transformation $T_{\phi}: X\times G\rightarrow X\times G$ given by formula $T_{\phi}(x,y)=(Tx,\phi(x)y)$.
\end{defn}
\

 $T_{\phi}$ preserves the product measure $\mu\times\nu$.
\

\

\subsection{ \textit{Rauzy maps A and B}}
Let $G$ be a compact connected Lie group with the normalized Haar measure $\nu$. Then the Haar measure for $G^n$ is the product measure
 $\nu\times...\times\nu$ which from now and further we will also denote by $\nu$ without the risk of confusion.
 \

\begin{defn}
 The Rauzy map  \textbf{A}$:G^n\rightarrow G^n$ is defined by formula 
 \
 
 $A(g_1,...,g_n)=(h_1,...,h_n)$, where:

\begin{equation}
    h_k=
    \begin{cases}
      g_k, & \text{if}\ 1\leq k\leq {\pi}^{-1}(n) \\
      g_ng_{{\pi}^{-1}(n)}, & \text{if}\ k={\pi}^{-1}(n)+1\\
      g_{k-1}, & \text{if}\  {\pi}^{-1}(n)+2\leq k\leq n\ (such\ k\ may\ not\ exist)
    \end{cases}
  \end{equation}
  \end{defn}
  \
  
  \begin{defn}
  The Rauzy map \textbf{B}$:G_n\rightarrow G_n$ is defined by formula 
  \
  
  $B(g_1,...,g_n)=(h_1,...,h_n)$, where:
  
  \begin{equation}
    h_k=
    \begin{cases}
      g_k, & \text{if}\ 1\leq k\leq {\pi}^{-1}(n)-1 \ (such\ k\ may\ not\ exist) \\
      g_ng_{{\pi}^{-1}(n)}, & \text{if}\ k={\pi}^{-1}(n)\\
      g_{k}, & \text{if}\  {\pi}^{-1}(n)+1\leq k\leq n
    \end{cases}
  \end{equation}
  \end{defn}
  \
  
  \
  \begin{Lemma} The Rauzy maps $A$ and $B$ preserve the measure $\nu$ on $G^n$.
  \end{Lemma}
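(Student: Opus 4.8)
The plan is to observe that both maps $A$ and $B$ are built from the unary left-multiplication operation on a single coordinate of $G^n$, post-composed with a coordinate permutation, and to check each of these two building blocks preserves $\nu$. The key fact is the following: for a fixed index $j$ and a fixed ``multiplier'' that depends only on coordinates \emph{other than} a chosen target coordinate, the map replacing that target coordinate $g$ by $(\text{multiplier})\cdot g$ preserves the Haar measure $\nu$ on $G^n$. This is immediate from Fubini, disintegrating $\nu = \nu \times \cdots \times \nu$ along the target coordinate: for $\nu$-almost every choice of the remaining coordinates the multiplier is a fixed element $c \in G$, and left translation $g \mapsto c g$ preserves $\nu$ on $G$ by the defining left-invariance of Haar measure; then integrate back over the other coordinates.

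First I would rewrite $A$ as a composition. Let $m = \pi^{-1}(n)$. Decompose $A = \sigma \circ L$, where $L: G^n \to G^n$ multiplies the $m$-th coordinate on the left by $g_n$ (which is a different coordinate since $m \le n-1$ by irreducibility, and $g_n$ is untouched by this step) and leaves all other coordinates fixed, and $\sigma$ is the cyclic reindexing sending $(g_1,\dots,g_n)$ to the tuple that places the old coordinates $m+1,\dots,n$ into slots $m+2,\dots,n+1 \bmod n$ appropriately — concretely $\sigma$ is the permutation of coordinates realizing the ``shift'' in the third branch of Definition~2.4 together with putting the modified coordinate into slot $m+1$. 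One checks directly that $\sigma \circ L = A$ by comparing the three cases. Since $L$ preserves $\nu$ by the Fubini argument above and $\sigma$, being a coordinate permutation, preserves the product measure $\nu$ trivially, the composition $A$ preserves $\nu$. The same argument handles $B$: with $m = \pi^{-1}(n)$, the map $B$ multiplies the $m$-th coordinate on the left by $g_n$ and fixes every other coordinate (no permutation is needed, since in Definition~2.5 all indices stay put), so $B = L$ in that case and we are done.

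The only point requiring a little care — and the mild obstacle — is the bookkeeping that the ``multiplier'' $g_n$ genuinely depends only on coordinates distinct from the target coordinate $m$; this is where irreducibility of $\pi$ enters, guaranteeing $\pi^{-1}(n) \ne n$ so that $m \le n-1$ and the coordinate $g_n$ is left intact by the multiplication step. Once that is in place, the Fubini/left-invariance computation is routine and the coordinate permutation is measure-preserving by inspection, so no further estimates are needed.
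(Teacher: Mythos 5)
Your overall strategy --- write each Rauzy map as a coordinate permutation composed with a map that translates a single coordinate by an element depending only on the \emph{other} coordinates, then invoke Fubini together with translation invariance of the Haar measure --- is sound and is essentially the paper's argument, which phrases the same decomposition in terms of elementary Nielsen maps. Your treatment of $B$ is correct: with $m=\pi^{-1}(n)$, $B$ really is the map replacing $g_m$ by $g_ng_m$ and fixing every other coordinate, and since irreducibility gives $m\neq n$ the multiplier $g_n$ is independent of the target coordinate, so the Fubini/left-invariance computation applies.

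The decomposition of $A$, however, is wrong, and the check you defer (``one checks directly that $\sigma\circ L=A$'') does not go through. From the three cases in the definition of $A$, its output is $(g_1,\dots,g_m,\;g_ng_m,\;g_{m+1},\dots,g_{n-1})$: the original $g_m$ \emph{survives} in slot $m$, the product $g_ng_m$ sits in slot $m+1$, and $g_n$ no longer appears as a standalone entry. Your $L$ replaces the $m$-th coordinate by $g_ng_m$ and keeps $g_n$ in slot $n$, so its output has entry multiset $\{g_1,\dots,g_{m-1},\,g_ng_m,\,g_{m+1},\dots,g_{n-1},\,g_n\}$, which contains $g_n$ but has lost $g_m$; no coordinate permutation $\sigma$ can turn this into the output of $A$. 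Already for $n=2$ one has $L(g_1,g_2)=(g_2g_1,g_2)$ while $A(g_1,g_2)=(g_1,g_2g_1)$, and these are not related by permuting the two slots. The correct building block is $L'(g_1,\dots,g_n)=(g_1,\dots,g_{n-1},\,g_ng_m)$, i.e.\ \emph{right} multiplication of the $n$-th coordinate by $g_m$, followed by the cycle that moves slot $n$ into slot $m+1$ and shifts slots $m+1,\dots,n-1$ up by one; $L'$ preserves $\nu$ by the same Fubini argument, using that Haar measure on a compact group is bi-invariant, so right translations preserve it as well. With that one correction your proof closes.
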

  
    \begin{proof} The maps $A$ and $B$ are compositions of \textbf{elementary Nielsen maps} 
  $N^{\alpha}_{ij}: G^n\rightarrow G^n, 1\leq i< j\leq n$ and $N^{\beta}: G^n\rightarrow G^n$ defined by 
  \
  
  $N^{\alpha}_{ij}(g_1,...,g_i,...,g_j,...,g_n)=(g_1,...,g_j,...,g_i,...,g_n)$
   and 
\
   
   $N^{\beta}(g_1, g_2,...,g_n)=(g_2g_1, g_2,...,g_n)$. Both 
  $N^{\alpha}_{ij}$ and $N^{\beta}$ are easily seen to preserve $\nu$.
  
  \end{proof}
\subsection{ \textit{Extended Rauzy-Veech induction, extended Veech Cocycle}}
Let us consider an IET $T=(\lambda, \pi)$ with permuted intervals $I_1,..., I_n$. 

\begin{defn}
The \textbf{simple function} $\phi: I\rightarrow G$ is defined as
$\phi(x)=g_k$, if $x\in I_k$, $1\leq k\leq n$, where the $n$-tuple $g=(g_1,...,g_n)\in G^n$. 
\end{defn}

\begin{defn}
The \textbf{simple G-extension}  is a G-extension  $T_{\phi}$ defined by an IET $T$ and a simple function $\phi$.
 \end{defn}

From definition 7 it follows that ${\mathbb{R}}^n_+\times S^0_n\times G^n$ is the space of simple G-extensions.
\

For a simple $G$-extension $T_{\phi}=(\lambda, \pi, g)$ let $(\overline{\lambda}, \overline{\pi})=\Upsilon(\lambda,\pi)$. One easily sees that the first return map of $T_{\phi}$ on the set $[0,|\overline{\lambda}|]\times G$ is again a simple $G$-extension given by the triple $(\overline{\lambda},\overline{\pi},\overline{g})$ where $\overline{g}=$\textbf{A}$g$ or $\overline{g}=$\textbf{B}$g$ depending on which Rauzy rule was used for $(\lambda, \pi)$. 

\begin{defn}
The \textbf{Extended Rauzy-Veech Renormalization} is a map 
\

$\mathcal{R}:{\mathbb{R}}^n_+\times S^0_n\times G^n\rightarrow {\mathbb{R}}^n_+\times S^0_n\times G^n$, defined for full measure set of $(\lambda, \pi)$ by formula
$\mathcal{R}(\lambda,\pi, g)=(\overline{\lambda},\overline{\pi},\overline{g})$.
\end{defn}

\begin{defn}
 The \textbf{Extended Veech Cocycle} is a map
 \
 
  $\Gamma:{\mathbb{R}}^n_+\times S^0_n\rightarrow Homeo({G^n})$, defined for almost every $(\lambda, \pi)$ by 
 
  \begin{equation}
    \Gamma(\lambda, \pi)g=
    \begin{cases}
      Ag, & \text{if}\ {\lambda}_n<\lambda_{{\pi}^{-1}(n)} \\
      Bg, & \text{if}\ {\lambda}_n>\lambda_{{\pi}^{-1}(n)}
    \end{cases}
  \end{equation}
\end{defn}
\

From the definitions of $\mathcal{R}$ and $\Gamma$ follows identity 
$\mathcal{R}(\lambda, \pi, g)=(\Upsilon(\lambda, \pi), \Gamma(\lambda, \pi)g)$ so $\mathcal{R}$ itself is a skew product over $\Upsilon$.
\

\section{Weak mixing for typical compact G-extensions over interval exchange transformations}
Clearly for $T_{\phi}$ to be ergodic or weakly mixing it is necessary that the base transformation $T$ itself is ergodic or weakly mixing. The sufficient condition for $T_{\phi}$ to be weakly mixing is given by the following criterion due to Keynes and Newton[6],[8],[9].
\

\begin{Theorem}\textbf{\textit{Keynes-Newton criterion of ergodicity.}}
\

\

 Let $T:X\rightarrow X$ be an ergodic measure-preserving transformation of a probability space $(X,\mu)$, $G$ be a compact Lie group with the normalized Haar measure $\nu$ and  $\phi:X\rightarrow G$ be a measurable function. Then the $G$-extension  
 $T_{\phi}:X\times G\rightarrow X\times G$ is ergodic if and only if:
\

\

$1)$ For any unitary irreducible representation $\Theta: G\rightarrow U(d)$ of dimension $d\geq 2$ the equation 

\begin{equation}
F(Tx)=\Theta(\phi(x))F(x)
\end{equation}
\

does not have nonzero solutions $F\in L^2(X, {\mathbb{C}}^d)$.
\

\

$2)$ For any non-trivial representation $\gamma: G\rightarrow U(1)$ the equation

\begin{equation}
f(Tx)=\gamma(\phi(x))f(x)
\end{equation}
\

does not have nonzero solutions $f\in L^2(X, \mathbb{C})$.
\end{Theorem}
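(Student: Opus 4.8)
The plan is to deduce the criterion from the Peter--Weyl theorem and the Schur orthogonality relations. The crucial structural fact is that $T_{\phi}$ commutes with the right translation action $R_h(x,y)=(x,yh)$ of $G$ on $X\times G$, since $T_{\phi}(x,yh)=(Tx,\phi(x)yh)=R_h(T_{\phi}(x,y))$. Hence the Koopman operator $U_{T_{\phi}}$ on $L^2(X\times G,\mu\times\nu)$ commutes with the unitary representation $h\mapsto R_h$ of $G$. By Peter--Weyl, $L^2(G)=\bigoplus_{\Theta}\mathcal{H}_{\Theta}$ is an orthogonal sum over the equivalence classes of unitary irreducible representations $\Theta$ of $G$, with $\mathcal{H}_{\Theta}$ the finite-dimensional span of the matrix coefficients of $\Theta$; each $\mathcal{H}_{\Theta}$ is $R$-invariant, so $L^2(X\times G)=\bigoplus_{\Theta}\bigl(L^2(X)\otimes\mathcal{H}_{\Theta}\bigr)$ splits into $R$-invariant summands, which $U_{T_{\phi}}$ must therefore preserve. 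The $T_{\phi}$-invariant functions then decompose as the sum of their components in the individual summands. On the trivial summand $L^2(X)\otimes\mathcal{H}_{\mathrm{triv}}=L^2(X)$ the operator $U_{T_{\phi}}$ acts as $U_T$, so by ergodicity of $T$ the only invariants there are constants; since the nontrivial irreducible representations are exactly those of dimension $\geq2$ together with the nontrivial characters, it follows that $T_{\phi}$ is ergodic if and only if for every such $\Theta$ the summand $L^2(X)\otimes\mathcal{H}_{\Theta}$ contains no nonzero $T_{\phi}$-invariant vector.

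The second step is to identify these invariants with solutions of the twisted cocycle equations. Writing $d=\dim\Theta$, the Schur orthogonality relations let one write every element of $L^2(X)\otimes\mathcal{H}_{\Theta}$ uniquely as $F(x,y)=\operatorname{tr}\!\bigl(A(x)\,\Theta(y)\bigr)$ with $A\in L^2(X,M_d(\mathbb{C}))$ and $\|F\|_{L^2(X\times G)}$ proportional to $\|A\|_{L^2(X,M_d)}$. The invariance $F(Tx,\phi(x)y)=F(x,y)$ becomes $\operatorname{tr}\!\bigl(A(Tx)\Theta(\phi(x))\Theta(y)\bigr)=\operatorname{tr}\!\bigl(A(x)\Theta(y)\bigr)$ for a.e.\ $x$ and all $y$ (both sides being continuous in $y$ for a.e.\ fixed $x$), and since the matrix coefficients of $\Theta$ are linearly independent this is equivalent to $A(Tx)\Theta(\phi(x))=A(x)$, i.e.\ $A(Tx)=A(x)\Theta(\phi(x))^{-1}$. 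Transposing and using $(\Theta(g)^{T})^{-1}=\overline{\Theta(g)}$ for unitary $\Theta(g)$, each column $v$ of $A(\cdot)^{T}$ satisfies $v(Tx)=\overline{\Theta(\phi(x))}\,v(x)$ in $L^2(X,\mathbb{C}^{d})$; if $F\not\equiv0$ then some column is nonzero, giving a nonzero solution of equation (4) for the irreducible representation $\overline{\Theta}$ when $d\geq2$, or of equation (5) for the nontrivial character $\overline{\gamma}$ when $d=1$. Conversely, from a nonzero $L^2$ solution $F_0$ of (4) (respectively (5)) one builds $\widetilde F(x,y)=\langle F_0(x),\Theta(y)w\rangle$ with $w\neq0$ fixed: unitarity of $\Theta$ gives $T_{\phi}$-invariance, Schur orthogonality gives $\|\widetilde F\|_2^{2}=\tfrac{|w|^{2}}{d}\|F_0\|_2^{2}<\infty$, and lying in a nontrivial isotypic summand makes $\widetilde F$ non-constant, so $T_{\phi}$ is not ergodic. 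Because the two conditions of the theorem quantify over \emph{all} representations of the stated types, the passage to the conjugate representations $\overline{\Theta}$ and $\overline{\gamma}$ costs nothing, and the two implications combine into the asserted equivalence.

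The tools used --- Peter--Weyl, Schur orthogonality, continuity of the matrix coefficients of a finite-dimensional representation --- are all standard, so I do not anticipate a real obstacle. The one point that needs a little care is verifying that the component of a non-constant invariant function in a single nontrivial isotypic subspace is a genuine $L^2$ (and not merely measurable) solution of the twisted equation, which is exactly what the proportionality between $\|F\|_{L^2(X\times G)}$ and $\|A\|_{L^2(X,M_d)}$ supplies. The conceptual core of the whole argument is the commutation of $U_{T_{\phi}}$ with the right $G$-action, which is what makes the Peter--Weyl decomposition compatible with $T_{\phi}$-invariance.
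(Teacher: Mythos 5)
The paper does not prove this statement at all: it is quoted as a known criterion and attributed to Keynes and Newton (with a pointer to their paper and to Parry--Pollicott), so there is no internal proof to compare yours against. That said, your argument is a correct and complete derivation of the criterion, and it follows the standard route one would find in those references: commutation of $T_{\phi}$ with the right $G$-action, the Peter--Weyl splitting $L^2(X\times G)=\bigoplus_{\Theta}L^2(X)\otimes\mathcal{H}_{\Theta}$ into $U_{T_{\phi}}$-invariant isotypic summands, identification of invariant vectors in the $\Theta$-summand with $L^2$ solutions of the twisted equation for $\overline{\Theta}$, and the observation that passing to conjugate representations is harmless because the hypotheses quantify over all representations of each type. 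The two points that genuinely need care --- upgrading the a.e.\ identity in $y$ to an identity of matrix coefficients via continuity and linear independence, and the Schur-orthogonality computation showing that the invariant function built from a solution $F_0$ is a nonzero, non-constant element of $L^2$ --- are both handled correctly. I see no gap; the only cosmetic remarks are that in the forward direction you should say explicitly that a non-constant invariant function, after subtracting its (constant) trivial-isotypic component, still has a nonzero invariant component in some nontrivial summand, which you do implicitly, and that $|w|^2$ should read $\|w\|^2$.
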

\

\begin{Theorem}\textbf{\textit{Keynes-Newton criterion of weak mixing.}}
\

\

 Let $T:X\rightarrow X$ be a weakly mixing measure-preserving transformation of a probability space $(X,\mu)$, $G$ be a compact Lie group with the normalized Haar measure $\nu$ and  $\phi:X\rightarrow G$ be a measurable function. Then the skew shift  
 $T_{\phi}:X\times G\rightarrow X\times G$ is weakly mixing if and only if:
\

\

$1)$ For any unitary irreducible representation $\Theta: G\rightarrow U(d)$ of dimension $d\geq 2$ the equation 

\begin{equation}
F(Tx)=\Theta(\phi(x))F(x)
\end{equation}
\

does not have nonzero solutions $F\in L^2(X, {\mathbb{C}}^d)$.
\

\

$2)$ For any non-trivial representation $\gamma: G\rightarrow U(1)$ and any $\alpha\in\mathbb{C}, |\alpha|=1$ the equation

\begin{equation}
f(Tx)=\alpha\gamma(\phi(x))f(x)
\end{equation}
\

does not have nonzero solutions $f\in L^2(X, \mathbb{C})$.
\end{Theorem}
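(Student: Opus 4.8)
The plan is to characterize weak mixing of $T_\phi$ through its eigenfunctions and to expand in the $G$-variable via the Peter--Weyl theorem. Recall that a measure-preserving transformation is weakly mixing exactly when the associated Koopman operator has no eigenfunctions other than the constants (equivalently: it is ergodic and has no eigenvalue $\ne 1$). So I would work on $L^2(X\times G,\mu\times\nu)$ with the unitary $U_{T_\phi}H(x,y)=H(Tx,\phi(x)y)$. Writing $\widehat G$ for the set of irreducible unitary representations of $G$ and $\Theta_0$ for the trivial one, there is a Peter--Weyl decomposition $L^2(X\times G)=\bigoplus_{\Theta\in\widehat G}\mathcal H_\Theta$ with $\mathcal H_\Theta=L^2(X)\otimes M_\Theta$, where $M_\Theta$ is the span of the conjugated matrix coefficients $\overline{\Theta_{ij}(y)}$, $1\le i,j\le d:=\dim\Theta$. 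Using $\Theta_{ij}(\phi(x)y)=\sum_k\Theta_{ik}(\phi(x))\Theta_{kj}(y)$ one checks that each $\mathcal H_\Theta$ is $U_{T_\phi}$-invariant and that $U_{T_\phi}|_{\mathcal H_\Theta}$ is unitarily equivalent to $d$ copies of the twisted Koopman operator
\begin{equation}
\mathcal U_\Theta:L^2(X,\mathbb C^{d})\to L^2(X,\mathbb C^{d}),\qquad(\mathcal U_\Theta F)(x)=\Theta(\phi(x))^{-1}F(Tx).
\end{equation}
For $\Theta=\Theta_0$ this is $U_T$ on $L^2(X)$, and the constants of $X\times G$ lie entirely in $\mathcal H_{\Theta_0}$; since $T$ is weakly mixing, $U_T$ has no nonconstant eigenfunction, so the $\Theta_0$-block never obstructs weak mixing of $T_\phi$. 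Thus $T_\phi$ is weakly mixing if and only if, for every nontrivial $\Theta\in\widehat G$ and every $\alpha\in\mathbb C$ with $|\alpha|=1$, the equation $F(Tx)=\alpha\,\Theta(\phi(x))F(x)$ has no nonzero solution in $L^2(X,\mathbb C^{d})$. When $d=1$ this is literally condition $2)$ (with $\gamma=\Theta$), and the case $\alpha=1$, $d\ge 2$ is condition $1)$; so it remains to show that for $d\ge2$ the phase $\alpha$ is redundant, which is the only substantive point.

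To remove $\alpha$: suppose $d\ge2$ and $F\ne0$ solves $F(Tx)=\alpha\,\Theta(\phi(x))F(x)$. Since $\Theta(\phi(x))$ is unitary and $|\alpha|=1$, the function $\|F\|$ is $T$-invariant, hence a positive constant by ergodicity of $T$; normalize it to $1$. Then $M(x):=F(x)F(x)^{*}$ is essentially bounded (so in $L^2$), takes values in the rank-one projections, and a one-line computation in which $\alpha\overline\alpha=1$ cancels yields
\begin{equation}
M(Tx)=\Theta(\phi(x))\,M(x)\,\Theta(\phi(x))^{-1}.
\end{equation}
This is an equation of the same type, now $\alpha$-free, for the representation $\mathrm{Ad}\circ\Theta\cong\Theta\otimes\overline\Theta$ on $\mathrm{End}(\mathbb C^{d})$. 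By Schur's lemma the only one-dimensional $(\mathrm{Ad}\circ\Theta)$-invariant subspace of $\mathrm{End}(\mathbb C^{d})$ is $\mathbb C\cdot I_d$; subtracting that component, $M(x)-\tfrac1d I_d$ is a nonzero $L^2$ solution of the same equation on the trace-zero matrices --- nonzero because $M(x)$ has rank one a.e.\ and a rank-one matrix is never scalar when $d\ge2$. Decomposing the trace-zero matrices into $(\mathrm{Ad}\circ\Theta)$-irreducible subrepresentations (all nontrivial) and projecting onto a nonvanishing component gives a nontrivial $\Theta'\in\widehat G$ and a nonzero solution of $F'(Tx)=\Theta'(\phi(x))F'(x)$: this contradicts $1)$ if $\dim\Theta'\ge2$ and $2)$ (with $\alpha=1$) if $\dim\Theta'=1$. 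Hence, assuming $1)$ and $2)$, no nontrivial $\Theta$ and no $\alpha$ admit a solution, and with the first paragraph this gives weak mixing of $T_\phi$. The converse --- that failure of $1)$ or of $2)$ destroys weak mixing --- is immediate from the same dictionary: a nonzero solution of either equation embeds as a nonzero $U_{T_\phi}$-eigenfunction orthogonal to the constants (eigenvalue $1$ in case $1)$, eigenvalue $\alpha$ in case $2)$), so $T_\phi$ fails to be ergodic when the eigenvalue is $1$ and has nontrivial point spectrum otherwise.

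I expect the one genuinely non-routine step to be exactly this removal of the phase $\alpha$ for $d\ge2$ --- the passage to $\mathrm{Ad}\circ\Theta$ and the use of Schur's lemma to split off the scalar subrepresentation; this is the ingredient with no analogue in the $U(1)$ theory, where all irreducibles are one-dimensional and the criterion is the single scalar equation $f(Tx)=\alpha\gamma(\phi(x))f(x)$. The rest is Peter--Weyl bookkeeping, the only care needed being the precise identification of the $U_{T_\phi}$-action on each isotypic block $\mathcal H_\Theta$. (Alternatively one could try to deduce the statement from the preceding ergodicity criterion applied to the $(G\times G)$-extension $T_\phi\times T_\phi$ of the transformation $T\times T$, which is weakly mixing and hence ergodic; but unwinding the irreducible representations of $G\times G$ reproduces essentially the same computation, including the elimination of $\alpha$.)
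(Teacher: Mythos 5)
Your proof is correct. Note, however, that the paper does not prove this statement at all: it quotes the criterion as a known result of Keynes--Newton (with references to Parry and Parry--Pollicott) and uses it as a black box, so there is no internal proof to compare against. Your argument is a sound self-contained derivation along the standard lines: the Peter--Weyl decomposition of $L^2(X\times G)$ into isotypic blocks, the identification of $U_{T_\phi}$ on the $\Theta$-block with $d$ copies of the twisted Koopman operator, and the reduction of weak mixing to the absence of nonzero solutions of $F(Tx)=\alpha\,\Theta(\phi(x))F(x)$ for nontrivial $\Theta$ and unimodular $\alpha$ are all correct. You also correctly isolate and resolve the one genuinely nontrivial point, namely why the phase $\alpha$ can be dropped when $d\ge 2$: passing to $M(x)=F(x)F(x)^{*}$ (with $\|F\|$ constant by ergodicity and unitarity) kills $\alpha$, Schur's lemma guarantees the trivial subrepresentation of $\Theta\otimes\overline{\Theta}$ is exactly $\mathbb{C}\cdot I_d$, the rank-one projection $M(x)$ is never scalar for $d\ge2$ so the traceless part is a nonzero $L^2$ cocycle solution, and projecting onto a nonvanishing irreducible component of the traceless matrices (all nontrivial, and unitary for the Hilbert--Schmidt inner product) contradicts condition $1)$ or condition $2)$ with $\alpha=1$. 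The converse direction via explicit eigenfunctions orthogonal to the constants is likewise fine. This is exactly the content that distinguishes the nonabelian criterion from the $U(1)$ case, and your writeup could serve as a proof of the theorem the paper leaves to the literature.
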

\

We now remind two properties of generic IETs by  Veech, which we will combine with Keynes-Newton criterion. Let $m\in{\mathbb{Z}}_+$ and $({\lambda}^m,{\pi}^m)={\Upsilon}^m(\lambda, \pi)$ and $I^m=[0, |{\lambda}^m|]$.

\begin{defn}$\{$Veech property $P_1(\epsilon, m)$.$\}$ An IET $T=(\lambda, \pi)$ is said to satisfy property $P_1(\epsilon, m)$ if there exists 
$b\geq \epsilon\frac{|\lambda|}{|{\lambda}^m|}$, such that ${\beta}_i(\lambda)\notin T^kI^m$, for $1\leq i\leq n-1$ and $0\leq k<b$.
\end{defn}

\begin{defn}$\{$Veech property $P_2(\epsilon, m)$.$\}$ An IET $T=(\lambda, \pi)$ is said to satisfy property $P_2(\epsilon, m)$ if 
${\lambda}_{i}^m\geq\epsilon|{\lambda}^m|$ for $1\leq i\leq n$. 
\end{defn}

\begin{Theorem}$\{$Veech$\}$. For any $n\geq 2$ there is an  $\epsilon>0$ and a full measure set $\Omega\subseteq{\mathbb{R}}^n_+\times S^0_n$ such that for any IET $T\in\Omega$ there exists an infinite set $E\in{\mathbb{Z}}_+$
\end{Theorem}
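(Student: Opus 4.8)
The plan is to deduce this from the ergodicity of Zorich's acceleration of $\Upsilon$ ([20]). Recall that grouping together consecutive iterations of $\Upsilon$ that use the same Rauzy rule (\textbf{A} versus \textbf{B}) defines the Zorich map $\Upsilon_Z$, and that after normalizing lengths $\Upsilon_Z$ preserves, on $\Delta_{n-1}\times\mathcal{C}$ for each Rauzy class $\mathcal{C}$, a probability measure $\mu_Z$ equivalent to the product of Lebesgue measure and counting measure, with respect to which it is ergodic; in particular "$\mu_Z$-almost every" coincides with "Lebesgue-almost every". I will exhibit a set $\Delta^{*}\subseteq\Delta_{n-1}\times\mathcal{C}$ with $\mu_Z(\Delta^{*})>0$ and a constant $C=C(n)$ such that whenever the normalized Rauzy--Veech orbit $\hat\Upsilon^{m}(\lambda,\pi)$ lies in $\Delta^{*}$ there is a time $m'$ with $m\le m'\le m+C$ at which both $P_1(\epsilon,m')$ and $P_2(\epsilon,m')$ hold; Poincar\'e recurrence for $\Upsilon_Z$ then gives, for a.e. $(\lambda,\pi)$, infinitely many such $m$, and $E$ is the set of the corresponding times $m'$.

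For the balance property $P_2$, the natural object is the compact set $K_\epsilon=\{\mu:\mu_i\ge\epsilon|\mu|,\ 1\le i\le n\}$ of $\epsilon$-balanced length data. It has full-dimensional interior and $\mu_Z$ has a positive density there, so $\mu_Z(K_\epsilon)>0$; moreover $\mu_Z(K_\epsilon)\to1$ as $\epsilon\to0$ since $\bigcup_{\epsilon>0}K_\epsilon$ is the interior of the simplex. By definition a visit of $\hat\Upsilon^{m}(\lambda,\pi)$ to $K_\epsilon$ is exactly the statement $P_2(\epsilon,m)$. I will take $\Delta^{*}$ to be a subset of $K_\epsilon$, which already secures $P_2(\epsilon,m)$; the role of $\Delta^{*}$ and of $C$ is to additionally produce $P_1$ at a nearby time.

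The substantive point is $P_1$. First the reinterpretation: for $m$ large the interval $I^{m}=[0,|\lambda^{m}|]$ contains none of the discontinuities $\beta_1,\dots,\beta_{n-1}$, so on the full-measure set of IETs satisfying Keane's condition (where the backward $T$-orbits of the $\beta_i$ never meet the endpoints of the tower floors) one has $\beta_i\in T^{k}I^{m}\iff T^{-k}\beta_i\in I^{m}$; hence $P_1(\epsilon,m)$ asserts precisely that the backward orbit of each $\beta_i$ needs at least $\epsilon|\lambda|/|\lambda^{m}|$ steps to first enter $I^{m}$, equivalently that no $\beta_i$ lies in the bottom $\epsilon|\lambda|/|\lambda^{m}|$ floors of the Rokhlin tower over $I^{m}$, whose columns have heights $h^{m}_j=\sum_i B^{(m)}_{ij}$, the column sums of the Rauzy--Veech cocycle matrix $B^{(m)}$ defined by $\lambda=B^{(m)}\lambda^{m}$. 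The mechanism I would use is to choose $\Delta^{*}\subseteq K_\epsilon$ and $C=C(n)$ so that whenever $\hat\Upsilon^{m}(\lambda,\pi)\in\Delta^{*}$ there is $m'\in[m,m+C]$ with $\lambda^{m'}$ still $\epsilon'$-balanced, $|\lambda^{m'}|\asymp|\lambda^{m}|$, and the matrix $B^{(m,m')}$ (defined by $\lambda^{m}=B^{(m,m')}\lambda^{m'}$) strictly positive; the existence of such $\Delta^{*}$ and $C$ is Veech's uniform structure for Rauzy--Veech induction ([15]--[17]), obtained by intersecting $K_\epsilon$ with a small piece of a cylinder of the Rauzy diagram along which a strictly positive transition is forced within a bounded window. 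For such $m'$: the $\epsilon$-balance at time $m$ gives $\sum_j h^{m}_j\asymp|\lambda|/|\lambda^{m}|$; strict positivity of $B^{(m,m')}$ yields $h^{m'}_l=\sum_j h^{m}_j B^{(m,m')}_{jl}\ge\sum_j h^{m}_j$ for every column $l$, hence $h^{m'}_l\asymp|\lambda|/|\lambda^{m'}|$; and the same positivity forces every $\beta_i$, which occupies level $0$ of the coarser tower over $I^{m_i}$ at the generation $m_i\le m$ where it last belonged to the induced cut set, to reappear in the tower over $I^{m'}$ only after a complete passage through an intermediate tower, hence at level $\gtrsim|\lambda|/|\lambda^{m'}|$. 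This gives $P_1(\epsilon',m')$ as well as $P_2(\epsilon',m')$ for a suitable $\epsilon'=\epsilon'(n,\epsilon,C)$; shrinking $\epsilon$ to this $\epsilon'$ once and for all completes the construction, with $\Omega$ the intersection of the full-measure set on which $\Upsilon_Z$, Keane's condition and unique ergodicity hold, with the full-measure set of IETs whose $\Upsilon_Z$-orbit meets $\Delta^{*}$ infinitely often.

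The step I expect to be the real obstacle is the last one: passing from strict positivity of the intermediate cocycle matrix to the lower bound on the level of the \emph{discontinuities} $\beta_i$, rather than of generic points (for which equidistribution under unique ergodicity would already suffice). The difficulty is that the tower over $I^{m'}$ is itself built from the $T$-orbits of the $\beta_i$, so equidistribution cannot be invoked directly; instead one must track, step by step through the Rauzy--Veech algorithm, the generation at which each $\beta_i$ drops out of the cut set of the induced IET and verify that every subsequent strictly positive block of the cocycle raises its tower level by at least the minimal column height of the preceding tower. This bookkeeping, together with the construction of $\Delta^{*}$ with the stated boundedness and positivity properties, is essentially the content of Veech's analysis in [15]--[17], which I would follow; the remaining ingredients — the recurrence argument for $\Upsilon_Z$, the equivalence of $P_1$ with the backward-orbit statement, and the computation with column sums — are routine.
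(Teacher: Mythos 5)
A preliminary remark on the comparison itself: the paper gives no proof of this theorem. It is stated as a quoted result of Veech, and moreover the statement as printed is truncated --- the conclusion should continue ``\dots an infinite set $E\subseteq{\mathbb{Z}}_+$ such that $T$ satisfies $P_1(\epsilon,m)$ and $P_2(\epsilon,m)$ for every $m\in E$'', as one reconstructs from the way the proof of Theorem 4.4 invokes it. So your proposal can only be judged on its own terms. On those terms your skeleton is the right one and is essentially Veech's: ergodicity of the Zorich acceleration, recurrence to a positive-measure set of balanced length data intersected with a cylinder of the Rauzy diagram forcing a strictly positive cocycle block within a bounded window, and the identification of $P_1(\epsilon,m)$ with a lower bound on the levels at which the discontinuities $\beta_i$ sit in the Rokhlin tower over $I^m$.

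The genuine gap is exactly the step you flag and then defer to ``Veech's analysis in [15]--[17]'': as written, your treatment of the tower levels of the $\beta_i$ is not a proof but a citation --- the same citation the paper already makes --- and your description of the mechanism (each $\beta_i$ ``occupies level $0$ of the coarser tower at the generation where it last belonged to the cut set'') is not the correct bookkeeping, since the $\beta_i$ themselves never lie in $I^m$ for large $m$; it is their first backward entrance points that matter. The step can in fact be closed elementarily. Under Keane's condition the first backward entrance point of $\beta_i$ into $I^m$ is one of the discontinuities $\beta^m_l=\sum_{j\le l}\lambda^m_j$ of the induced map on $I^m$. If every entry of $B^{(m,m')}$ is at least $1$, then $\lambda^m_j=\sum_l B^{(m,m')}_{jl}\lambda^{m'}_l\ge|\lambda^{m'}|$ for every $j$, so no $\beta^m_l$ lies in $I^{m'}=[0,|\lambda^{m'}|)$; hence the backward orbit of each $\beta_i$ must complete at least one full backward return to $I^m$ before it can enter $I^{m'}$, which costs at least $\min_j h^m_j$ iterates and places $\beta_i$ at level at least $\min_j h^m_j$ of the tower over $I^{m'}$. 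To convert $\min_j h^m_j$ into $c\,|\lambda|/|\lambda^{m'}|$ you further need the heights $h^m_j$ to be mutually comparable, which balance of the \emph{lengths} at time $m$ does not provide; you must also impose a strictly positive block $B^{(m'',m)}$ over a bounded window \emph{before} time $m$, since then $h^m_j\ge\sum_l h^{m''}_l$ while $h^m_j\le C\sum_l h^{m''}_l$ for all $j$. With $\Delta^{*}$ taken to be the positive-measure set of balanced data lying in a cylinder that forces positive blocks on both sides, your recurrence argument then closes the proof; without some such argument the proposal reduces to the external reference it was meant to replace.
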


\begin{Theorem} For a full measure set $\Omega$ of IETs, $\Omega\subseteq{\mathbb{R}}^n_+\times S^0_n$, $n\geq 2$ and for all $g=(g_1, g_2,\dots, g_n)\in G^n$ the following property takes place.
\
Let $\phi(x):[0, |\lambda|]\rightarrow G$ be a simple function, constructed by $g$.   Assume that for $T\in\Omega$ and for a unitary representation $\Theta: G\rightarrow U(d)$, $d\geq 2$  the equation 

\begin{equation} 
F(Tx)=\Theta(\phi(x))F(x)
\end{equation}
\

has a nonzero solution $F\in L^2(X, {\mathbb{C}}^d)$. Denote $({\lambda}^m, {\pi}^m, g^m)= {\mathcal{R}}^m(\lambda, \pi, g)$.  Then there exists an infinite set $E(T)\subseteq{\mathbb{Z}}_+$ and a sequence of vectors $\{w^m\}\in{\mathbb{C}}^d, ||w^m||=1$, such that
 $||\Theta(g_k^m)w^m-w^m||\rightarrow 0$, $1\leq k\leq n$, for $m\in E(T)$. 
\end{Theorem}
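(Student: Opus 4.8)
The plan is to push the equation $F\circ T=(\Theta\circ\phi)\cdot F$ down to the tower of Rauzy--Veech induced systems, to show that the very existence of $F$ forces the normalized averages of $F$ over the induced intervals $I^m_k$ to concentrate at near--unit vectors, and then to distill from these averages one vector almost fixed by all of $\Theta(g^m_1),\dots,\Theta(g^m_n)$ simultaneously. First I would normalize: take $|\lambda|=1$, enlarge $\Omega$ so that every $T\in\Omega$ is uniquely ergodic (hence $\|F(Tx)\|=\|F(x)\|$ a.e., so $\|F(\cdot)\|$ is $T$-invariant and may be rescaled to $1$), and observe that by the definition $\mathcal R=(\Upsilon,\Gamma)$ through the Rauzy maps $A,B$ the first return map of $T_\phi$ to $I^m\times G$ is the simple $G$-extension $(T^m)_{\phi^m}$ attached to $(\lambda^m,\pi^m,g^m)$; thus $F|_{I^m}$ satisfies $F(T^mx)=\Theta(g^m_k)F(x)$ for a.e.\ $x\in I^m_k$ and $F|_{I^m}\not\equiv0$ by unique ergodicity. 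Writing $r^m_k$ for the return time of $I^m_k$ and $c^m_{k,j}\in G$ for the partial products ($c^m_{k,0}=e$, $c^m_{k,r^m_k}=g^m_k$), one has $F(T^jx)=\Theta(c^m_{k,j})F(x)$ for $x\in I^m_k$, $0\le j\le r^m_k$, with each $T^j|_{I^m_k}$ a translation.

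The core step is a concentration estimate. Put $v^m_k=\tfrac1{\lambda^m_k}\int_{I^m_k}F$. Since $\Theta$ is unitary and $T^j|_{I^m_k}$ is a translation, the average of $F$ over a Rohlin floor $T^j(I^m_k)$ equals $\Theta(c^m_{k,j})v^m_k$, of norm $\|v^m_k\|$, so the conditional expectation of $F$ onto the floor partition $\xi_m=\{T^j(I^m_k):1\le k\le n,\ 0\le j<r^m_k\}$ satisfies
\begin{equation}
\bigl\|\E(F\mid\xi_m)\bigr\|_{L^2}^2=\sum_{k=1}^n r^m_k\lambda^m_k\,\|v^m_k\|^2,\qquad \sum_{k=1}^n r^m_k\lambda^m_k=1 .
\end{equation}
The cells of $\xi_m$ are intervals of length $\le|\lambda^m|\to0$, so $\E(F\mid\xi_m)\to F$ in $L^2$ and hence $\sum_k r^m_k\lambda^m_k\bigl(1-\|v^m_k\|^2\bigr)\to0$. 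Along the infinite set $E(T)$ of the Veech Theorem, $P_2(\epsilon,m)$ gives $\lambda^m_k\ge\epsilon|\lambda^m|$ and $P_1(\epsilon,m)$ gives a rigid Rohlin tower over $I^m$ of height $b\ge\epsilon/|\lambda^m|$, whence $r^m_k\ge b$ and $r^m_k\lambda^m_k\ge\epsilon^2$ for every $k$; combined with the previous limit this forces $\|v^m_k\|\to1$ for each $k$ as $m\to\infty$ in $E(T)$, i.e.\ $F$ is $L^2(I^m)$-close to the step function $G_m=\sum_k\mathbf 1_{I^m_k}v^m_k$.

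Finally I would build the common almost invariant vector. Changing variables $y=T^mx$ in the induced equation shows that $F$ is equally $L^2(I^m)$-close (with the same error) to $\widetilde G_m=\sum_k\mathbf 1_{T^m(I^m_k)}\Theta(g^m_k)v^m_k$, so comparing $G_m$ with $\widetilde G_m$ yields
\begin{equation}
\sum_{k,k'}\bigl|I^m_{k'}\cap T^m(I^m_k)\bigr|\;\bigl\|v^m_{k'}-\Theta(g^m_k)v^m_k\bigr\|^2\longrightarrow0 .
\end{equation}
Passing to a subsequence of $E(T)$ along which $\pi^m$ is constant and $\lambda^m/|\lambda^m|$, $g^m$, the unit vectors $v^m_k$ and the rescaled overlaps all converge, the limiting relations read $v_{*,k'}=\Theta(g_{*,k})v_{*,k}$ whenever the limiting overlap is positive; since $\Theta(g_{*,k})v_{*,k}$ is then a convex combination of unit vectors $v_{*,k'}$, strict convexity of the Euclidean ball forces those $v_{*,k'}$ to coincide, and (with $\pi^m$ irreducible and the lengths non-degenerate by $P_2$) the departure/arrival overlap graph is connected, so all $v_{*,k}$ equal one unit vector $v_*$ fixed by every $\Theta(g_{*,k})$. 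Taking $w^m=v^m_1/\|v^m_1\|$ along this subsequence $E(T)$ one gets $w^m\to v_*$ and $\Theta(g^m_k)w^m\to\Theta(g_{*,k})v_*=v_*$, hence $\|\Theta(g^m_k)w^m-w^m\|\to0$ for all $k$, as claimed.

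The hard part is the last step: turning the weighted smallness above into a \emph{single} vector almost fixed by all $\Theta(g^m_k)$. The limiting/compactness argument needs the overlap graph to stay connected in the limit, and this is not automatic from irreducibility and $P_2$ alone (perfectly balanced but algebraically degenerate limit data can disconnect it); one must therefore further thin $E(T)$ so that $\lambda^m/|\lambda^m|$ avoids the null set of degenerate configurations — which is possible because of equidistribution of the Rauzy--Veech orbit — while still keeping the Veech properties, and one must control the overlap coefficients that could a priori vanish in the limit. By contrast the concentration estimate, though it is the conceptual crux of the lemma, reduces cleanly to $L^2$-martingale convergence together with the Veech tower-height and balance bounds $P_1,P_2$.
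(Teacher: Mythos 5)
Your concentration step (the martingale/conditional‑expectation computation showing $\sum_k r^m_k\lambda^m_k(1-\|v^m_k\|^2)\to 0$ and hence $\|v^m_k\|\to 1$ via the bound $r^m_k\lambda^m_k\ge\epsilon^2$ from $P_1,P_2$) is sound, but the final gluing step has a genuine gap, which you yourself flag and do not close. The relations you extract are $v_{*,k'}=\Theta(g_{*,k})v_{*,k}$ only for those pairs $(k,k')$ whose overlap weight $|T^m(I^m_k)\cap I^m_{k'}|/\lambda^m_k$ stays bounded below along the subsequence. If the induced IET nearly permutes the subintervals as sets --- i.e.\ each $T^m(I^m_k)$ essentially coincides with a single $I^m_{\sigma(k)}$ --- then every out‑neighbourhood is a singleton, strict convexity gives nothing, and chasing the relations around a cycle of $\sigma$ only shows that $v_{*,k}$ is fixed by a \emph{product} $\Theta(g_{*,k_j})\cdots\Theta(g_{*,k_1})$, not by each $\Theta(g^m_k)$ individually, which is what the theorem asserts. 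Ruling this out requires a quantitative non‑degeneracy of the overlap pattern holding \emph{simultaneously} with $P_1(\epsilon,m)$ and $P_2(\epsilon,m)$ along an infinite set of times; your appeal to "equidistribution of the Rauzy--Veech orbit" is not enough as stated, since Veech's theorem only provides some infinite set $E$, and intersecting it with the (full‑measure, but time‑dependent) set of non‑degenerate renormalization times needs a positive‑density or recurrence‑to‑a‑compact‑good‑set version of that theorem that you would have to prove.

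The paper's proof avoids this difficulty entirely by never introducing separate averages $v^m_k$. It uses the $P_1$ Rokhlin tower over the whole interval $I^m$ (interval floors $T^jI^m$, $0\le j<b$, of total measure $\ge\epsilon$) together with a Lusin‑type approximation lemma to find \emph{one} floor on which $F$ is $L^1$‑close to a single constant unit vector; transporting back to the base by the unitary cocycle (constant on $I^m$ since $j<b$) gives one vector $w^m$ with $\int_{I^m}\|F-w^m\|\,dx<\delta|I^m|$. Then for each $k$ the return relation $F(T^{l^m_k}x)=\Theta(g^m_k)F(x)$ on $I^m_k$, plus a Chebyshev estimate and $P_2$ to find a point $x\in I^m_k$ at which both $F(x)$ and $F(T^{l^m_k}x)$ are $\sqrt{\delta}$‑close to $w^m$, yields $\|\Theta(g^m_k)w^m-w^m\|\le 2\sqrt{\delta}$ for all $k$ with the \emph{same} $w^m$. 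You should either adopt this single‑constant approximation or supply the missing strengthening of Veech's theorem; as written, the last step of your argument does not go through.
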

\

\begin{proof}

Let us fix $\delta>0$ and let $\Omega$ be a full measure set of ergodic IETs satisfying the conclusion of Theorem 4.3. Pick $T\in\Omega$.  As $\Theta$ is a unitary representation then from equation 8 it follows that $||F(Tx)||=||F(x)||$. As $T$ is ergodic then without loss of generality we may assume that $||F(x)||=1, x\in[0, 1]$.
\

\

 Let $E\in{\mathbb{Z}}_+$  be an infinite set such that for each $m\in E$ , $T$ satisfies  $P_1(\epsilon, m)$ and $P_2(\epsilon, m)$.  If $m\in E$ and $I^m=[0, |{\lambda}^m|]$ then $P_1(\epsilon,m)$ implies  that $T^kI^m$ is an interval for $0\leq k<b$ ( $b$ depends on $m$) and also that $|{\cup}_{k=0}^{b-1}T^kI^m|\geq\epsilon|\lambda|$. As $m\rightarrow\infty$, $|I^m|\rightarrow 0$; therefore, if $m\in E$ is sufficiently large, then, by Lemma 7.1 there exist $k$ and $w\in{\mathbb{C}}^d, ||w||=1$ such that $0\leq k<b$ and
\

\begin{equation}
{\int}_{T^kI^m}||F(x)-w||dx<\delta|I^m|
\end{equation}
\

As $F(T^kx)=\Theta(\phi(T^{k-1}x))...\Theta(\phi(x))F(x)$ and since $k<b$ then the unitary operator $\Theta(\phi(T^{k-1}x))...\Theta(\phi(x))$  is independent on $x\in I^m$. It follows that there exists $w^m\in{\mathbb{C}}^d, ||w^m||=1$ such that

\begin{equation}
{\int}_{I^m}||F(x)-w^m||dx<\delta|I^m|
\end{equation}
\

From equation 10 it follows that the set $\{x\in I^m: ||F(x)-w^m||\geq\sqrt{\delta}\}$ has measure at most $\sqrt{\delta}|I^m|$.
\

\

Let $l_k^m$ denote the first return time of $I_k^m$ into $I^m$. By definition of $\mathcal{R}$ we have the relation $F(T^{l_k^m}x)=\Theta(g_k^m)F(x)$ for $x\in I_k^m$, for $1\leq k\leq n$. If $1\leq k\leq n$ and if there is an $x\in I_k^m$ such that $||F(x)-w^m||\leq{\sqrt{\delta}}$ and $F(T^{l_k^m}x)-w^m||\leq{\sqrt{\delta}}$ then $||\Theta(g_k^m)w^m-w^m||\leq 2\sqrt{\delta}$. The existence of such an $x$ is guaranteed by $P_2(\epsilon, m)$ if $\delta$ is small enough.

\end{proof}

\begin{Lemma} Let $\Theta: G\rightarrow U(d)$ be a $d$-dimensional unitary irreducible representation of a compact connected Lie group $G$ and $d\geq2$. Let $n\geq 2$ be a positive integer and $S\subseteq G^n$ be a set of $n$-tuples $g=(g_1,...,g_n)$ such that there exists a vector $w\in{\mathbb{C}}^d, ||w||=1$ such that $\Theta(g_k)w=w$ for $1\leq k\leq n$. Then $S$ is a compact set of measure zero with respect to the Haar measure $\nu$ on $G^n$.
\end{Lemma}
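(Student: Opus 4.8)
I would prove the two assertions separately, the compactness being routine and the null‑set statement being where irreducibility and $d\ge 2$ are used. Write $M(g)$ for the $(nd)\times d$ matrix obtained by stacking $\Theta(g_1)-I,\dots,\Theta(g_n)-I$; then $g=(g_1,\dots,g_n)\in S$ if and only if $M(g)$ has rank $<d$, since a nontrivial common kernel of the $\Theta(g_k)-I$ is the same thing as a common fixed unit vector. As rank is lower semicontinuous and $G^n$ is compact, $S$ is closed, hence compact. (Equivalently, if $g^{(m)}\to g$ with $g^{(m)}\in S$, extract a convergent subsequence from the corresponding common fixed unit vectors and use continuity of $\Theta$.)

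For the measure‑zero statement the plan is to use an analyticity dichotomy. Since the matrix coefficients of $\Theta$ are real‑analytic on $G$, each of the finitely many $d\times d$ minors of $M(g)$ is a real‑analytic function on $G^n$, and $S$ is exactly their common zero set, hence a real‑analytic subset of $G^n$. A real‑analytic subset of a connected real‑analytic manifold is either the whole manifold or of measure zero, so it will be enough to exhibit a single $n$‑tuple $(g_1,\dots,g_n)\in G^n$ for which $\Theta(g_1),\dots,\Theta(g_n)$ have no common nonzero fixed vector.

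To produce such a tuple I would invoke the classical fact that a compact connected Lie group is topologically generated by two elements. Applying it to $\Theta(G)$, choose $g_1,g_2\in G$ with $\overline{\langle \Theta(g_1),\Theta(g_2)\rangle}=\Theta(G)$ and set $g_3=\dots=g_n=e$ (this is where $n\ge 2$ enters). Any vector fixed by $\Theta(g_1)$ and $\Theta(g_2)$ is fixed by the closed subgroup they generate, i.e. by all of $\Theta(G)$; since $\Theta$ is irreducible with $d\ge 2$ there is no such nonzero vector, so $\bigcap_{k=1}^n\ker(\Theta(g_k)-I)=\{0\}$ and $(g_1,\dots,g_n)\notin S$, whence $\nu(S)=0$. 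The one genuinely nontrivial ingredient is the two‑generator fact: the naive observation that $\Theta(G)$ has no invariant vector only yields a tuple outside $S$ once $n$ is large enough (shrink the common fixed space one generator at a time), and so does not cover $n=2$. I expect this to be the main point a referee would want spelled out.

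As a more self‑contained alternative avoiding the generation fact, I would stratify by orbit type. The isotropy groups $H_w=\mathrm{Stab}_G(w)$, $w\in S^{2d-1}$, fall into finitely many conjugacy classes $K_1,\dots,K_N$ (finiteness of orbit types for the smooth action of the compact group $G$ on the compact manifold $S^{2d-1}$), each $K_j$ a proper closed subgroup because $d\ge 2$ rules out a $G$‑fixed unit vector. Then $S\subseteq\bigcup_{j=1}^N\{(hk_1h^{-1},\dots,hk_nh^{-1}):h\in G,\ k_i\in K_j\}$, and the $j$‑th set is the image of $\Phi_j\colon G\times K_j^n\to G^n$, $\Phi_j(h,k_1,\dots,k_n)=(hk_1h^{-1},\dots,hk_nh^{-1})$. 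This map is invariant under the free $K_j$‑action $k\cdot(h,k_1,\dots,k_n)=(hk^{-1},kk_1k^{-1},\dots,kk_nk^{-1})$, hence factors through a manifold of dimension $\dim G+(n-1)\dim K_j$, which is strictly less than $n\dim G=\dim G^n$ because $\dim K_j<\dim G$. So each of the $N$ sets is the smooth image of a manifold of strictly smaller dimension, hence null, and $S$ is a finite union of null sets.
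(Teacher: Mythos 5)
Your proof is correct, but it reaches the measure-zero statement by a different route than the paper. The paper's argument is the most economical one: it observes that $S$ is contained in $S'\times G^{n-2}$, where $S'\subseteq G^2$ is the set of pairs admitting a common fixed unit vector, and then invokes the classical fact that \emph{almost every} pair $(g_1,g_2)\in G^2$ topologically generates $G$; for such a pair a common fixed vector would be $G$-invariant, contradicting irreducibility, so $S'$ (hence $S$) is null. Your first argument uses the same generation phenomenon but only in its weaker, existential form (one topologically generating pair suffices), paying for this with the real-analytic dichotomy: since $S$ is the common zero set of the $d\times d$ minors of $M(g)$, it is a proper analytic subset of the connected manifold $G^n$ as soon as a single tuple outside $S$ is exhibited, hence null. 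Your second argument is the most self-contained of the three: it needs no generation fact at all, only finiteness of orbit types and the observation that every stabilizer is a proper (hence lower-dimensional) closed subgroup of the connected group $G$, and as a bonus it gives an explicit upper bound $\dim G+(n-1)\max_j\dim K_j<n\dim G$ on the dimension of $S$ --- the kind of quantitative information about the exceptional set that the introduction laments is unavailable for nonabelian $G$. All three arguments use $d\geq 2$ together with irreducibility in the same place (no nonzero $G$-fixed vector), and you correctly isolate $n\geq 2$ and the two-generator fact as the crux that a naive coordinate-by-coordinate reduction would miss.
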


\begin{proof} The compactness of $S$ is an immediate consequence of compactness of $G$ and compactness of the unit sphere $S^{2d-1}=\{w\in {\mathbb{C}}^d | ||w||=1\}$.
We move on to prove that $S$ has zero measure. It is enough to prove that a full measure set of pairs $(g_1, g_2)\in G^2$ satisfies the property:  there \textbf{does not exist} a vector $w\in{\mathbb{C}}^d,||w||=1$ such that $\Theta(g_1)w=w$ and $\Theta(g_2)w=w$. 
\

\

It is a classical result that for any compact connected Lie group there is a set of pairs $P\in G^2$ of a full measure, such that any pair $g=(g_1, g_2)\in P$ generates a dense subgroup. For such a generating pair existence of $w\in{\mathbb{C}}^d,||w||=1$ such that $\Theta(g_1)w=w$ and $\Theta(g_2)w=w$ would imply that for any $g\in G$, $\Theta(g)w=w$ and this contradicts irreducibility of $\Theta$.

\end{proof}

\begin{Lemma} Assume $\Theta:G\rightarrow U(d)$ is a $d$-dimensional unitary irreducible representation of $G$, $d\geq 2$ and $g^m=(g_1^m,..., g_n^m)\in G^n$ is a sequence of $n$-tuples, such that there is sequence of vectors $w^m\in{\mathbb{C}}^n, ||w^m||=1$, satisfying $||\Theta(g_k^m)w^m-w^m||\rightarrow 0$, for $1\leq k\leq n$. Then $d(g^m,S)\rightarrow 0$.
\end{Lemma}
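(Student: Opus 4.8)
The plan is to argue by contradiction using the compactness of $S$ established in Lemma 7.2. Suppose $d(g^m,S)\not\to 0$. Then there is an $\varepsilon_0>0$ and a subsequence $m_j$ with $d(g^{m_j},S)\geq\varepsilon_0$ for all $j$. Since $G^n$ is compact, pass to a further subsequence along which $g^{m_j}\to g^\ast=(g_1^\ast,\dots,g_n^\ast)\in G^n$; by continuity of the metric, $d(g^\ast,S)\geq\varepsilon_0$, so in particular $g^\ast\notin S$. The goal is to derive a contradiction by showing $g^\ast\in S$, i.e. that there exists a unit vector $w^\ast\in\mathbb{C}^d$ with $\Theta(g_k^\ast)w^\ast=w^\ast$ for all $1\leq k\leq n$.

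To produce $w^\ast$, first note that the unit sphere $S^{2d-1}\subseteq\mathbb{C}^d$ is compact, so the sequence $w^{m_j}$ (of unit vectors) has a convergent subsequence; passing to it, assume $w^{m_j}\to w^\ast$ with $\|w^\ast\|=1$. Now fix $k$. We have the triangle-inequality estimate
\begin{equation}
\|\Theta(g_k^\ast)w^\ast-w^\ast\|\leq\|\Theta(g_k^\ast)w^\ast-\Theta(g_k^{m_j})w^{m_j}\|+\|\Theta(g_k^{m_j})w^{m_j}-w^{m_j}\|+\|w^{m_j}-w^\ast\|.
\end{equation}
The middle term tends to $0$ by hypothesis. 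The last term tends to $0$ by construction. For the first term, write $\|\Theta(g_k^\ast)w^\ast-\Theta(g_k^{m_j})w^{m_j}\|\leq\|\Theta(g_k^\ast)w^\ast-\Theta(g_k^\ast)w^{m_j}\|+\|\Theta(g_k^\ast)w^{m_j}-\Theta(g_k^{m_j})w^{m_j}\|$; the first piece equals $\|w^\ast-w^{m_j}\|$ since $\Theta(g_k^\ast)$ is unitary, hence $\to 0$, and the second piece is bounded by $\|\Theta(g_k^\ast)-\Theta(g_k^{m_j})\|_{\mathrm{op}}$, which tends to $0$ because $\Theta:G\to U(d)$ is continuous and $g_k^{m_j}\to g_k^\ast$. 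Therefore $\|\Theta(g_k^\ast)w^\ast-w^\ast\|=0$ for each $k$, so $w^\ast$ is a common fixed vector and $g^\ast\in S$, contradicting $d(g^\ast,S)\geq\varepsilon_0>0$.

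The only subtlety — and the one place one must be slightly careful — is the chain of subsequence extractions: one must extract the subsequence realizing $d(g^m,S)\geq\varepsilon_0$, then a convergent subsequence of the $g^m$'s, then a convergent subsequence of the corresponding $w^m$'s, and check that the hypothesis $\|\Theta(g_k^m)w^m-w^m\|\to 0$ survives restriction to a subsequence (it does, trivially). The rest is the continuity of $\Theta$ together with unitarity, which makes all the cross terms collapse. I do not expect any genuine obstacle here; the statement is essentially a compactness-and-continuity packaging of Lemma 7.2.
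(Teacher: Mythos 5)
Your proposal is correct and follows essentially the same route as the paper: contradiction, extraction of convergent subsequences of $g^m$ and $w^m$ by compactness of $G^n$ and of the unit sphere, and a triangle-inequality decomposition of $\Theta(g_k^\ast)w^\ast-w^\ast$ collapsed using unitarity and continuity of $\Theta$, forcing the limit into $S$. The only cosmetic difference is that you split the error into three terms (one of which you subdivide) while the paper writes a four-term telescoping sum, and you make the use of continuity of $\Theta$ explicit where the paper leaves it implicit.
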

 Here $S\subseteq G^n$ is defined in Lemma 4.1 and \textbf{d} is metric on $G^n$ induced from any biinvariant Riemannian metric on $G$.
 
\begin{proof}
Assume by contradiction that $d(g^m,S)\nrightarrow 0$. By passing to subsequence we may assume that $d(g^m,S)\geq\epsilon$ for some $\epsilon>0$. As the set $S_{\epsilon}=\{g\in G^n | d(g,S)<\epsilon\} $ is clearly open, then $G^n\backslash S_{\epsilon}$ is compact. By passing to subsequence we may assume that there is an $n$-tuple $g\in G^n\backslash S_{\epsilon}$, such that $g^m\rightarrow g$.
\

\

 Moreover as $||w^m||=1$ and a unit sphere ${\textbf{S}}^{2d-1}\in{\mathbb{C}}^d$ is compact we may assume, one more time passing to subsequence, that there is a vector $w\in {\mathbb{C}}^d$, $||w||=1$, such that $w^m\rightarrow w$. Then 
$\Theta(g_k)w-w=(\Theta(g_k)w-\Theta(g_k^m)w)+(\Theta(g_k^m)w-\Theta(g_k^m)w^m)+(\Theta(g_k^m)w^m-w^m)+(w^m-w)$. Using unitarity of $\Theta$ and triangle inequality we see that the righthandside of the latter identity goes to zero which implies that $\Theta(g_k)w=w$. But this means that $g\in S$ which is not possible as $g\in G^n\backslash S_{\epsilon}$.
\end{proof} 

\begin{Lemma} Assume that $T^m: X\rightarrow X$ is a sequence of measure preserving automorphisms of a probability space $(X,\mu)$ and $A\subseteq X$ is a measurable subset. Let $Y$ be a set of points which \textbf{eventually stay in $A$}, or more formally $\forall y\in Y$ $\exists$ $m(y)\in {\mathbb{Z}}_+$ such that $\forall m\geq m(y) : T^m(y)\in A$. Then $\mu(Y)\leq\mu(A)$.

\end{Lemma}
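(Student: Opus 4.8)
The plan is to reduce the statement to the elementary fact that a measure–preserving map pulls back $A$ to a set of the same measure, and to dispose of the ``eventually'' quantifier by an exhaustion argument. First I would introduce, for each $N\in\mathbb{Z}_+$, the auxiliary set
\[
Y_N=\bigcap_{m\geq N}(T^m)^{-1}(A).
\]
Each $T^m$ is measurable (indeed bimeasurable, being an automorphism of the measure space), so $(T^m)^{-1}(A)$ is measurable, and hence so is the countable intersection $Y_N$. By the very definition of $Y$ — a point $y$ lies in $Y$ precisely when there is an index $N=m(y)$ beyond which all $T^m(y)$ belong to $A$ — we have $Y=\bigcup_{N=1}^{\infty}Y_N$, so $Y$ is itself measurable; moreover the $Y_N$ form an increasing chain $Y_1\subseteq Y_2\subseteq\cdots$.

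Next I would invoke continuity of $\mu$ along the increasing union to get $\mu(Y)=\lim_{N\to\infty}\mu(Y_N)$. To bound each term, fix $N$ and choose any $m\geq N$; then $Y_N\subseteq (T^m)^{-1}(A)$, and since $T^m$ preserves $\mu$ we obtain $\mu(Y_N)\leq\mu\bigl((T^m)^{-1}(A)\bigr)=\mu(A)$. This bound is uniform in $N$, so letting $N\to\infty$ yields $\mu(Y)\leq\mu(A)$. Note that nothing here uses that the $T^m$ are iterates of a single map or share any algebraic structure; an arbitrary sequence of measure–preserving automorphisms suffices.

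There is essentially no hard step; the only point needing a word of care is the measurability of $Y$, which is exactly why one introduces the sets $Y_N$ rather than arguing with $Y$ directly (the latter being a priori only an abstract union indexed by the uncountable family of ``stabilization times'' $m(y)$). If one preferred not to address measurability of $Y$ at all, the identical computation gives $\mu^{*}(Y)\leq\mu(A)$ for the outer measure, which is all that the intended applications require.
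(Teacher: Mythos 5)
Your proof is correct and follows essentially the same route as the paper: both exhaust $Y$ by the increasing measurable sets of points that stay in $A$ from time $N$ onward (the paper writes these as partial unions of its disjoint pieces $Y_p$, which coincide with your $Y_N$) and then pass to the limit using measure preservation. The only cosmetic difference is that you bound $\mu(Y_N)$ via the preimage $(T^m)^{-1}(A)$ while the paper pushes forward by $T^m$; your variant has the small advantages of working for non-invertible measure-preserving maps and of making the measurability of $Y$ explicit.
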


\begin{proof}
For each non-negative integer $p$ we define the set $Y_p\in X$ as follows: 
\

$Y_p=\{y\in X|$ $ 1) \forall m\geq p: T^m(y)\in A ($ here we assume that $T^0(x)=x);$ $2)$ Either $p=0$ or $T^{p-1}(y)\notin A\}$. Informally speaking $Y_p$ is a set of points, which stay in $A$ since the time $p$ , but not since time $p-1$. Clearly the sets $Y_p$ do not intersect for $0\leq p<\infty$ and $Y=\bigcup\limits_{p=0}^{\infty}Y_p$. 
\

Now   $T^m(\bigcup\limits_{p=0}^{m}Y_p)\subseteq A$ by definition of the sets $Y_p$. As $T^m$ preserves $\mu$ we have that 
$\mu(\bigcup\limits_{p=0}^{m}Y_p)=\mu(T^m(\bigcup\limits_{p=0}^{m}Y_p))\leq\mu(A)$. As $Y=\bigcup\limits_{p=0}^{\infty}Y_p$ we have that $\mu(Y)=\lim\mu(\bigcup\limits_{p=0}^{m}Y_p)\leq\mu(A)$ Q.E.D.  
\end{proof}

\begin{Theorem}Let $d\geq 2$ and $\Theta:G\rightarrow U(d)$ be an irreducible unitary representation of $G$. Let $n\geq 2$. Then for almost all triples $(\lambda,\pi,g)\in{\Delta}_{n-1}\times S^0_n\times G^n$ the equation

\begin{equation}
F(Tx)=\Theta(\phi(x))F(x)
\end{equation}

\

has only a trivial solution $F(x)=0\in L^2([0, 1], {\mathbb{C}}^d)$
\end{Theorem}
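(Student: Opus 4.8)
The plan is to combine the three ingredients assembled just above: Theorem 4.4 (which says that a nonzero $L^2$ solution forces $d(g^m,S)\to 0$ along an infinite set $E(T)$), Lemma 4.1 (which says $S\subseteq G^n$ is compact of $\nu$-measure zero), and Lemma 4.3 (the ``eventually staying in $A$'' measure estimate). First I would fix the irreducible representation $\Theta$ of dimension $d\ge 2$ and, for a parameter $\rho>0$, introduce the open $\rho$-neighborhood $S_\rho=\{g\in G^n\mid d(g,S)<\rho\}$. Since $S$ is compact of measure zero, $\nu(S_\rho)\to 0$ as $\rho\downarrow 0$; fix any target $\eta>0$ and choose $\rho$ with $\nu(S_\rho)<\eta$. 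The goal is to show that the set $B_\Theta$ of $(\lambda,\pi,g)$ for which equation (11) has a nonzero solution has $\nu$-measure at most $\eta$; letting $\eta\to 0$ then gives $\nu(B_\Theta)=0$.

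Next I would use the skew-product structure $\mathcal{R}(\lambda,\pi,g)=(\Upsilon(\lambda,\pi),\Gamma(\lambda,\pi)g)$ and the fact that $\Upsilon$ (more precisely the Rauzy-Veech-Zorich speed-up, which carries a finite invariant measure) together with the Rauzy maps $A,B$ preserve the relevant measure on the parameter space; by Lemma 3.1 the fiber maps preserve $\nu$ on $G^n$, so $\mathcal{R}$ preserves the (finite, after Zorich acceleration) product measure, hence its normalization $\mu$ is an invariant probability measure equivalent to Lebesgue$\times$Haar on $\Delta_{n-1}\times S^0_n\times G^n$. Now suppose $(\lambda,\pi,g)\in B_\Theta\cap(\Omega\times G^n)$, where $\Omega$ is the full-measure Veech set from Theorem 4.3. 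By Theorem 4.4 there is an infinite set $E(T)\subseteq\mathbb{Z}_+$ with $d(g^m,S)\to 0$ along $E(T)$, where $g^m$ is the $G^n$-component of $\mathcal{R}^m(\lambda,\pi,g)$; in particular $g^m\in S_\rho$ for all sufficiently large $m\in E(T)$. The subtlety is that $E(T)$ need not be all of $\mathbb{Z}_+$, so $g^m$ need not \emph{eventually} lie in $S_\rho$ for \emph{every} $m$; this is exactly where I would need either (i) to replace $\mathcal{R}$ by the induced renormalization on the return to the times in $E(T)$ — but $E(T)$ depends measurably on the point, not on a fixed subset — or, more robustly, (ii) to argue that $d(g^m,S)\to 0$ along $E(T)$ already forces $d(g^m,S)\to 0$ along \emph{all} of $\mathbb{Z}_+$, using that between consecutive elements of $E(T)$ the map on $G^n$ is a fixed isometry-composition (an elementary Nielsen word) and that the constraint $\|\Theta(g^m_k)w^m-w^m\|$ small propagates — or (iii) to apply Lemma 4.3 to the \emph{Zorich-accelerated} renormalization $\hat{\mathcal{R}}$, for which one shows (from the Veech property $P_1,P_2$ holding along a positive-density set) that the full-measure set $\Omega$ can be taken so that every $T\in\Omega$ has $E(T)$ of the form ``all sufficiently large iterates of $\hat{\mathcal{R}}$''. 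Assuming one of these reductions, the conclusion is: for $\mu$-a.e.\ $(\lambda,\pi,g)\in B_\Theta$, the orbit $\{\hat g^m\}$ eventually stays in the set $A=\Delta_{n-1}\times S^0_n\times S_\rho$.

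Then I would apply Lemma 4.3 with $X=\Delta_{n-1}\times S^0_n\times G^n$, $\mu$ the invariant probability above, $T^m=\hat{\mathcal{R}}^m$, $A$ as above, and $Y=B_\Theta$ (up to a $\mu$-null set), obtaining $\mu(B_\Theta)\le\mu(A)=\nu(S_\rho)<\eta$. Since $\mu$ is equivalent to Lebesgue$\times$Haar and $\eta>0$ was arbitrary, $B_\Theta$ has Lebesgue$\times$Haar measure zero, which is the claim. The main obstacle I anticipate is precisely the passage from ``$d(g^m,S)\to 0$ along the infinite set $E(T)$'' to ``$\hat g^m$ eventually lies in $S_\rho$ for all large $m$'', i.e.\ reconciling the pointwise-dependent set $E(T)$ with the fixed-set hypothesis of Lemma 4.3; I expect this to be handled by working with the Zorich acceleration and choosing $\Omega$ (shrinking it within full measure) so that the Veech times are cofinal among the accelerated steps, but the bookkeeping of how the Nielsen-word factors of $\Gamma$ act between Veech times, and checking that the almost-invariance $\|\Theta(g^m_k)w^m-w^m\|\to 0$ is not destroyed off $E(T)$, is the delicate point. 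Everything else — invariance of the measure, measure-zero-ness of $S$, and the final counting via Lemma 4.3 — is routine given the lemmas already proved.
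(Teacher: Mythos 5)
Your overall strategy (Theorem 4.4 $\Rightarrow$ Lemma 4.2 $\Rightarrow$ a neighborhood $S_\rho$ of the null compact set $S$ $\Rightarrow$ Lemma 4.3) is the right one, but the obstacle you flag at the end — reconciling the point-dependent set $E(T)$ with the fixed-sequence hypothesis of Lemma 4.3 — is real \emph{in your setup} and none of your three proposed workarounds is carried out; (ii) in particular is doubtful, since the almost-invariant vectors $w^m$ are extracted only at the Veech times where $P_1(\epsilon,m)$ and $P_2(\epsilon,m)$ hold, and there is no mechanism propagating the estimate to intermediate times, while (iii) asserts a cofinality property of the Veech times that Theorem 4.3 does not provide. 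The difficulty is an artifact of your decision to run the argument on the whole product space $\Delta_{n-1}\times S^0_n\times G^n$ with an invariant measure for a Zorich acceleration of $\mathcal{R}$. The paper avoids it entirely by working fiberwise: fix $(\lambda,\pi)\in\Omega$. Then both the sequence of Rauzy maps $\Gamma^m:G^n\rightarrow G^n$ and the infinite set $E(T)$ are determined by the base IET alone and are the \emph{same} for every $g$ in the fiber $G^n$. Lemma 4.3 is stated for an arbitrary sequence of measure-preserving automorphisms of a probability space — not for iterates of a single map — so you may apply it on the fiber $(G^n,\nu)$ to the subsequence $(\Gamma^{m_j})_{m_j\in E(T)}$ with $A=S_p$: every bad $g$ eventually stays in $S_p$ along this fixed subsequence, hence the bad set in the fiber has $\nu$-measure at most $\nu(S_p)$, which tends to $\nu(S)=0$ as $p\to\infty$ by compactness of $S$. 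Integrating over $(\lambda,\pi)\in\Omega$ (Fubini) finishes the proof. No invariant measure for $\mathcal{R}$ or its acceleration is needed, and measure preservation of the fiber maps is exactly Lemma 3.1.

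In short: replace your global application of Lemma 4.3 by a fiberwise one along the subsequence $E(T)$, and the gap disappears; as written, your proof is incomplete at precisely the step you identify as delicate.
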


\begin{proof} Assume that for some triple $(\lambda,\pi,g)\in{\Delta}_{n-1}\times S^0_n\times G^n$ there exists a nonzero solution $F(x)$ to the equation (11). Then by Theorem 4.4. there exists a sequence of vectors $w^m\in{\mathbb{C}}^d, ||w^m||=1$, such that
 $||\Theta(g_k^m)w^m-w^m||\rightarrow 0$, for $1\leq k\leq n$. Then Lemma 4.2 implies that $d(g_m,S)\rightarrow 0$.
 \
 
 It is enough then to prove that for any sequence ${\Gamma}^m:G^n\rightarrow G^n$ consisting of Rauzy maps $A$ and $B$, the set
 $D=\{g\in G^n| d({\Gamma}^m(g), S)\rightarrow 0\}$ has measure zero. 
 \
 
 \
 
 Choose a positive integer $p$ and consider a set 
 $S_p=\{g\in G^n| d(g, S)<1/p\}$. Then clearly the set $D$ is eventually in $S_p$ under the sequence ${\Gamma}^m$. So by Lemma 4.3 for any $p$, $\nu(D)\leq\nu(S_p)$. As set $S$ is compact it implies that $S=\bigcap S_p$. As $S_p$ is a monotone sequence of sets, $\nu(S_p)\rightarrow\nu(S)=0$ and so $\nu(D)=0$.
\end{proof}

\subsection{Adapted Avila-Forni argument for representations of dimension one.}

In order to apply Keynes-Newton criterion to one-dimensional representations of $G$ we will need the following theorem by Avila and Forni$[1]$.

\begin{Theorem}$\{$Hausdorff dimension of exceptional set$\}$
\

 For a full measure set of IETs $(\lambda, \pi)\in {\Delta}_{n-1}\times S_n^0$, $n\geq 3$ there is a set $W=W(\lambda, \pi)\subseteq{\mathbb{R}}^n$ of Hausdorff dimension at most $g(\pi)$ such that for all vectors 
$h=(h_1,...,h_n), h\in{\mathbb{R}}^n\backslash W$ the equation
\begin{equation}
F(Tx)=\phi(x)F(x)
\end{equation}
\

has a only a trivial solution $f(x)=0\in L^2([0,1],\mathbb{C})$. 
\end{Theorem}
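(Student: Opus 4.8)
Since Theorem 4.6 is quoted directly from the work $[1]$ of Avila and Forni, the plan is to recall, and then import, the structure of their argument. Identify $U(1)$ with $\mathbb R/\mathbb Z$ and write the simple function determined by $h=(h_1,\dots,h_n)\in\mathbb R^n$ as $\phi(x)=e^{2\pi i h_k}$ for $x\in I_k$; the Rauzy maps $A$ and $B$, read through $\gamma$, then act on the phase vector $h$ by unimodular integer matrices, yielding a linear cocycle $Z^{(m)}\in GL_n(\mathbb Z)$ over $\Upsilon$ such that $Z^{(m)}h$ is the phase vector of ${\mathcal R}^m(\lambda,\pi,g)$. The first step is the $d=1$ case of Theorem 4.4: running its proof with $\Theta=\gamma$ a nontrivial character — $|f|$ is $T$-invariant hence $\equiv1$, $f$ is almost constant on the tiny interval $I^m$, and this near-constancy is propagated along the first-return times $l_k^m$ via $P_1(\epsilon,m)$ and $P_2(\epsilon,m)$ — shows that a nonzero solution $f\in L^2$ of $f(Tx)=\phi(x)f(x)$ forces $\operatorname{dist}(Z^{(m)}h,\mathbb Z^n)\to0$ along the Veech subsequence $E(T)$. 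Hence the exceptional set is contained in
\[
D(\lambda,\pi)=\Bigl\{h\in\mathbb R^n:\ \operatorname{dist}\bigl(Z^{(m)}h,\mathbb Z^n\bigr)\to0\ \text{along an infinite subset of }\mathbb Z_+\Bigr\},
\]
and it is enough to prove $\dim_H D(\lambda,\pi)\le g(\pi)$ for a.e.\ $(\lambda,\pi)$. (That $D(\lambda,\pi)$ is $\nu$-null already follows from the argument of Theorem 4.5 with the compact null set $(\ker\gamma)^n\subset G^n$ in place of $S$; the additional content of Theorem 4.6 is the finer bound on the Hausdorff dimension.)

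The dimension bound is where renormalization dynamics enters. By Oseledets' theorem together with Forni's non-vanishing and Avila--Viana's simplicity of the Lyapunov spectrum of the Rauzy--Veech--Zorich cocycle, for a.e.\ $(\lambda,\pi)$ the cocycle $\{Z^{(m)}\}$ possesses a stable subspace $E^s(\lambda,\pi)$ of dimension $g(\pi)$, spanned by the $g(\pi)$ strictly negative Lyapunov exponents, which are paired by the symplectic form with the $g(\pi)$ positive ones inside the $2g(\pi)$-dimensional Hodge part; the remaining $n-2g(\pi)$ exponents vanish. On $E^s$ one has $\|Z^{(m)}h\|\to0$ exponentially, so $E^s\subseteq D(\lambda,\pi)$, already supplying dimension $g(\pi)$. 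For the portion of $D(\lambda,\pi)$ transverse to $E^s$ I would run the Avila--Forni covering estimate: along the subsequence $D(\lambda,\pi)\subseteq\{h:\operatorname{dist}(Z^{(m)}h,\mathbb Z^n)<\epsilon_m\}$, and since $Z^{(m)}$ is unimodular and contracts only along a neighbourhood of $E^s$ while on the remaining $n-g(\pi)$ directions it either expands or acts as an integer cocycle that escapes (or has a finite orbit in) the lattice, this set is covered by a number of boxes growing only polynomially in $\|Z^{(m)}\|$, each of diameter $O(\epsilon_m)$ along the contracting directions and exponentially small transversally; feeding the almost sure exponential growth rate of $\|Z^{(m)}\|$ — controlled, along a.e.\ renormalization orbit, by integrability of the Rauzy--Veech--Zorich cocycle and a large-deviation estimate — into the $s$-dimensional Hausdorff premeasure shows it stays bounded for every $s>g(\pi)$. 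Therefore $\dim_H W\le\dim_H D(\lambda,\pi)\le g(\pi)$.

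The hard part is precisely this quantitative covering step. The Oseledets splitting is only non-uniformly hyperbolic, so one must work either with Pesin-type blocks or, as in $[1]$, with the first-return dynamics of Rauzy--Veech--Zorich renormalization along times at which the cocycle is uniformly controlled; one then needs uniform control of the growth of $\|Z^{(m)}\|$ and of the angles between the Oseledets subspaces in order to sum a convergent Hausdorff series, and the whole estimate must be uniform in the combinatorics $\pi$ so that the exponent is exactly $g(\pi)$ and the conclusion holds on a full-measure set of $(\lambda,\pi)$. This is the technical core of $[1]$, which I would invoke as a black box. I note finally that only the upper bound on $D(\lambda,\pi)$ is needed for Theorem 4.6 as stated: the converse construction of $L^2$ solutions from the renormalization condition, the second half of the analysis in $[1]$, is not required here.
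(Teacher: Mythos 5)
The paper offers no proof of this statement at all: it is imported verbatim from Avila--Forni $[1]$ (the paper introduces it with ``we will need the following theorem by Avila and Forni $[1]$''), so the intended justification is citation, exactly as in the final step of your proposal. Your reconstruction of the internal argument of $[1]$ --- reduction, via the Veech properties $P_1$, $P_2$, to the weak-stable set of the integral Rauzy--Veech--Zorich cocycle acting on the phase vector $h$, followed by an Oseledets/covering estimate bounding the Hausdorff dimension of that set by $g(\pi)$ --- is a reasonable sketch of that reference, and since you ultimately invoke its technical core as a black box, your treatment is consistent with (and strictly more informative than) what the paper provides.
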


In Theorem 4.6 $g(\pi)$ is a genus of compact surface which one can construct, using IET $(\lambda, \pi)$, and the property of interest to us is that $n\geq 2g(\pi)$ for $n\geq 2$.
\begin{Theorem}
Let $n\geq 3$ and $a_1,...,a_n\in\mathbb{C}:|a_k|=1,1\leq k\leq n$. Let function $\phi:[0, 1]\rightarrow\mathbb{C}$ be defined by
$\phi(x)=a_k$ if $x\in I_k$, for $1\leq k\leq n$.Then for almost all IETs $(\lambda, \pi)\in{\Delta}_{n-1}\times S_n^0$, and almost all $a_1,...,a_n$ and under condition $|F(x)|=1$, the equation

\begin{equation}
F(Tx)=\alpha \phi(x)F(x)
\end{equation}

has only trivial solutions $\alpha=1$, and $F(x)=$ constant 
\end{Theorem}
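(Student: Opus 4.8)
\emph{Proof strategy.} The plan is to deduce Theorem 4.7 from the Avila--Forni Theorem 4.6 by an elementary Fubini argument. The point is that the extra rotation $\alpha$ merely shifts the ``frequency vector'' of the cocycle along the diagonal direction $\mathbf 1=(1,\dots,1)$, and collapsing that one direction is affordable exactly because $g(\pi)\le n/2\le n-2<n-1$ when $n\ge 3$.

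First I would fix coordinates. Write $\alpha=e^{2\pi i s}$, $a_k=e^{2\pi i t_k}$ and $t=(t_1,\dots,t_n)$. Then $\alpha\phi$ is the simple $S^1$-valued function built from the vector $t+s\mathbf 1$, so for a fixed IET $T=(\lambda,\pi)$ the equation $F(Tx)=\alpha\phi(x)F(x)$ is precisely the equation of Theorem 4.6 with frequency vector $h=t+s\mathbf 1$. If $F\in L^2$ is a nonzero solution then $|F(Tx)|=|F(x)|$ (since $|\alpha\phi|\equiv 1$), hence $|F|$ is a.e.\ constant because a.e.\ IET is ergodic; thus ``nonzero $L^2$ solution'' and ``solution with $|F|=1$'' coincide, and Theorem 4.6 applies verbatim. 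Since the equation depends on $h$ only modulo $\mathbb Z^n$, the genuine exceptional set $W_0(\lambda,\pi)=\{h\in\mathbb R^n:\text{the equation has a nonzero solution}\}$ is $\mathbb Z^n$-periodic and, being contained in the set $W$ of Theorem 4.6 (for a.e.\ $(\lambda,\pi)$), has Hausdorff dimension at most $g(\pi)$. Hence its image $\widetilde W\subseteq\mathbb T^n=\mathbb R^n/\mathbb Z^n$ still has $\dim_H\widetilde W\le g(\pi)$.

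Next I would push $\widetilde W$ down the diagonal circle. Let $\Delta=\{s\mathbf 1+\mathbb Z^n:s\in\mathbb R\}\subseteq\mathbb T^n$ and let $q:\mathbb T^n\to\mathbb T^n/\Delta\cong\mathbb T^{n-1}$ be the quotient homomorphism, which is smooth, hence Lipschitz. Then $q(\widetilde W)$ has Hausdorff dimension at most $g(\pi)\le\lfloor n/2\rfloor\le n-2<n-1$ for every $n\ge 3$, and hence is Lebesgue-null in $\mathbb T^{n-1}$. Disintegrating Haar measure on $\mathbb T^n$ along the circle fibres of $q$, the saturation $q^{-1}(q(\widetilde W))=\widetilde W+\Delta$ is then Lebesgue-null in $\mathbb T^n$. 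Since $(t+\Delta)\cap\widetilde W\neq\emptyset$ if and only if $t\in\widetilde W+\Delta$, we conclude that for a.e.\ $t\in\mathbb T^n$ we have $t+s\mathbf 1\notin W_0(\lambda,\pi)$ for \emph{every} $s\in\mathbb R$. Transporting by $t\mapsto(e^{2\pi i t_1},\dots,e^{2\pi i t_n})$ (which carries Lebesgue measure on $\mathbb T^n$ to Haar measure on $(S^1)^n$), this says exactly that for a.e.\ $(a_1,\dots,a_n)$ and every $\alpha$ with $|\alpha|=1$ the equation $F(Tx)=\alpha\phi(x)F(x)$ has no nonzero $L^2$ solution; the only remaining candidate, $\alpha=1$ with $F$ constant, would force every $a_k=1$ and hence does not occur for a.e.\ $(a_1,\dots,a_n)$. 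Ranging over the full-measure set of $(\lambda,\pi)$ from Theorem 4.6 yields the claim for a.e.\ $(\lambda,\pi,a_1,\dots,a_n)$.

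The crux, and the only place the hypothesis $n\ge 3$ is used beyond Theorem 4.6 itself, is the dimension bookkeeping: absorbing $\alpha$ into the cocycle costs exactly one torus dimension, and $q(\widetilde W)$ stays null in $\mathbb T^{n-1}$ only thanks to $g(\pi)\le\lfloor n/2\rfloor\le n-2<n-1$. I do not expect any analytic obstacle here; the depth of the statement lies entirely inside Theorem 4.6. The two routine points worth checking are: (i) Theorem 4.6 a priori only supplies \emph{some} set $W$ of dimension $\le g(\pi)$ containing the true exceptional set, so one should pass to the genuine (automatically $\mathbb Z^n$-periodic) set $W_0\subseteq W$ before projecting to $\mathbb T^n$; and (ii) one may enlarge $q(\widetilde W)$ to a $G_\delta$ set of the same Hausdorff dimension to keep all sets Borel when disintegrating Haar measure along the fibres of $q$.
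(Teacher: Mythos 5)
Your proposal is correct and follows essentially the same route as the paper: both absorb $\alpha$ into a diagonal shift $h\mapsto h+t(1,\dots,1)$ of the frequency vector and then use the Avila--Forni bound $\dim_H W\le g(\pi)$ together with $g(\pi)+1<n$ to conclude that the diagonal saturation of the exceptional set is Lebesgue-null. The only immaterial difference is that the paper bounds $\dim_H\bigl(W+\mathbb{R}(1,\dots,1)\bigr)\le g(\pi)+1<n$ directly, whereas you project along the diagonal onto $\mathbb{T}^{n-1}$ and apply Fubini; your extra remarks on periodicity, the reduction to $|F|=1$, and the non-occurrence of the constant solution merely fill in details the paper leaves implicit.
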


\begin{proof}
If $\phi:[0, 1]\rightarrow\mathbb{C}$ is defined by $\phi(x)=a_k=e^{2\pi i h_k}$, $h_k\in\mathbb{R}$, then the function $\alpha\phi(x)$ is defined by $\alpha\phi(x)=e^{2\pi i(h_k+t)}$, for some number $t\in\mathbb{R}$, such that $\alpha=e^{2\pi i t}$.
\

Let us define the set $\overline{W}=\{W+\mathbb{R}(1,...,1)\}=\{x\in {\mathbb{R}}^n| x=h+t(1,...,1)$, for some $h\in W$ and $t\in\mathbb{R}\}$.
As the Hausdorff dimension of $W$ is bounded by $g(\pi)$ then the Hausdorff dimension of $\overline{W}$ is bounded by $g(\pi)+1$ and so less than $n$. That implies that the Lebesgue measure of $\overline{W}$ is zero and the proof is complete.

\end{proof}

\begin{Theorem}

Let $\Theta: G\rightarrow U(1)$ be a non-trivial representation of $G$. Then for almost all triples $(\lambda,\pi,g)\in{\Delta}_{n-1}\times S^0_n\times G^n$ the following is true. For \textbf{all} $\alpha\in\mathbb{C}, |\alpha|=1$ the equation:

\begin{equation}
f(Tx)=\alpha\Theta(\phi(x))f   (x)
\end{equation}

\

has only a trivial solution $f(x)=0\in L^2([0, 1], \mathbb{C})$

\end{Theorem}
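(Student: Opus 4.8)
The plan is to reduce the one-dimensional case to the Avila--Forni result (Theorem~4.7) by the same logical scheme used for higher-dimensional representations in Theorem~4.5, with the key simplification that a one-dimensional unitary representation factors through a \emph{character}. First I would record the elementary fact that a nontrivial continuous homomorphism $\Theta\colon G\to U(1)$ kills the commutator subgroup $[G,G]$, and hence — since $G$ is compact \emph{connected} — factors through the torus quotient $G/\overline{[G,G]}\cong \mathbb{T}^r$ for some $r\geq 1$. Thus there is a continuous surjection $p\colon G\to\mathbb{T}^r$ and a nontrivial character $\chi$ of $\mathbb{T}^r$ with $\Theta=\chi\circ p$. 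Writing $p(g_k)=e^{2\pi i h_k}$ coordinate-wise, the cocycle $\Theta(\phi(x))$ over the IET is literally of the form $e^{2\pi i \langle c,h(x)\rangle}$ for a fixed integer vector $c\neq 0$, i.e. it is an abelian $\mathbb{R}$-valued (indeed $\mathbb{T}$-valued) piecewise constant cocycle of exactly the kind treated in Theorem~4.7 after a linear change of coordinates.

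Next I would fix the IET $(\lambda,\pi)$ in the full-measure Avila--Forni set of Theorem~4.6 (so also $n\geq 3$; the rotation case $n=2$ is handled separately as in Theorem~1.1), and push forward the Haar measure on $G^n$ to Lebesgue-class measure on the torus $(\mathbb{T}^r)^n$ via $p^n$. Under this pushforward the exceptional set for $(\lambda,\pi)$ becomes contained in the preimage under a fixed linear map of the set $\overline{W}=W+\mathbb{R}(1,\dots,1)$ from Theorem~4.7; since $\dim_H\overline{W}\le g(\pi)+1<n$ we get Lebesgue measure zero in $(\mathbb{T}^r)^n$, and because $p^n$ is a submersion (a covering onto $\mathbb{T}^r$ composed with projections) with Haar measure pulling back to Haar measure, the preimage in $G^n$ has $\nu$-measure zero. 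This shows that for a.e. $g$ and a.e.\ such IET, for \emph{all} $\alpha$ with $|\alpha|=1$ equation~(14) has no nonzero $L^2$ solution: indeed if $f$ were a nonzero solution then, as $\Theta$ is unitary, $|f|$ is $T$-invariant hence a.e.\ constant, so we may normalize $|f|=1$ and apply Theorem~4.7 to the $\mathbb{T}^r$-valued cocycle to force $\alpha=1$ and $f$ constant — but a nonzero constant $f$ forces $\Theta(\phi(x))\equiv 1$, i.e.\ $\langle c,h(x)\rangle\in\mathbb{Z}$ for all $x$, which fails for $\nu$-a.e.\ $g$ since $c\neq 0$.

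I would then integrate over the ($\sigma$-compact) set of IETs and over the countably many irreducible characters of $G$: since $G$ is a compact connected Lie group it has only countably many equivalence classes of irreducible representations, so the union over all nontrivial $\Theta\colon G\to U(1)$ of the exceptional sets is still $\nu$-null, and a Fubini argument over ${\Delta}_{n-1}\times S^0_n$ (using that $S^0_n$ is finite) yields the claimed full-measure conclusion for almost all triples $(\lambda,\pi,g)$ simultaneously.

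The main obstacle I anticipate is bookkeeping rather than conceptual: making the reduction $\Theta=\chi\circ p$ genuinely uniform — i.e.\ controlling all the countably many characters $\chi$ at once — and checking carefully that the linear change of coordinates from $h=(h_1,\dots,h_n)\in\mathbb{T}^n$ to the actual argument $\langle c,h\rangle$ does not create a measure-zero set of bad directions that somehow escapes Theorem~4.7; one must verify that for \emph{each} nonzero integer functional $c$ the affine subspace structure of $\overline{W}$ is still hit transversally, or equivalently observe that Theorem~4.7 already gives the conclusion for the single coordinate cocycle and that composing with a fixed integer linear map only shrinks the relevant exceptional set. A secondary point of care is the interplay between the normalization $|f|=1$ (legitimate by ergodicity of $T$) and the statement of Theorem~4.7, which is phrased under exactly that normalization.
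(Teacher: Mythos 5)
Your proposal is correct and is essentially the paper's own argument: the paper likewise reduces to the Avila--Forni statement (Theorem 4.7) for the tuple $(\Theta(g_1),\dots,\Theta(g_n))\in U(1)^n$ and pulls back the full-measure good set under $\rho=\Theta\times\dots\times\Theta:G^n\to U(1)^n$, which pushes Haar measure to Haar measure. Your detour through the torus quotient $G/\overline{[G,G]}$ and the worry about uniformity over countably many characters are harmless but unnecessary here, since $\Theta$ is fixed in the statement and a nontrivial character of a connected compact group already surjects onto $U(1)$.
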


\begin{proof}
Given a triple $(\lambda,\pi,g)\in{\Delta}_{n-1}\times S^0_n\times G^n$ define a function $\Xi:[0,1]\rightarrow U(1)$ as $\Xi(x)=\Theta(\phi(x))$. By Theorem 4.7 there is a full measure set $P\in U(1)\times...\times U(1)$ such that for any $\alpha$ the equation
\begin{equation}
f(Tx)=\alpha\Xi(x)f(x)
\end{equation}

has only a trivial solution $f(x)=0$. 
\

The projection map $\rho=\Theta\times\dots\times\Theta: G^n\rightarrow {[U(1)]}^n$ is a locally trivial fiber bundle, and so ${\rho}^{-1}(P)$ has a full measure. The proof is complete.
\end{proof}

Given an IET $T$ of $n$ intervals and $g\in G^n$ we construct a simple function $\phi$ and a simple $G$-extension $T_{\phi}$.

\begin{Theorem} Let $n\geq 3$. For almost all triples $(\lambda, \pi, g)\in {\Delta}_{n-1}\times S^0_n\times G^n$ the $G$-extension $T_{\phi}:[0, 1]\times G\rightarrow [0, 1]\times G$ is weakly mixing.

\end{Theorem}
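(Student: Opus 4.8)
The plan is to combine the Keynes--Newton criterion of weak mixing (Theorem 4.2) with the two ``no nonzero solution'' results already established: Theorem 4.5 for higher-dimensional representations and Theorem 4.8 for one-dimensional representations. First I would recall that for $n \geq 3$ a typical IET $T = (\lambda, \pi)$ is itself weakly mixing; this is a classical result (and is implicit in the setup surrounding Theorem 4.3, since the exceptional set is of measure zero and rotations require $n=2$). Fix such a full-measure set of weakly mixing IETs, and intersect it with the full-measure sets coming from Theorems 4.5 and 4.8.

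The delicate point is the quantifier structure: the criterion requires that for \emph{every} irreducible unitary representation $\Theta$ of dimension $d \geq 2$ equation (6) has no nonzero solution, and for \emph{every} nontrivial $\gamma : G \to U(1)$ and \emph{every} $\alpha$ on the unit circle equation (7) has no nonzero solution. Theorem 4.5 and Theorem 4.8 each handle a single representation on a full-measure set of triples $(\lambda, \pi, g)$, but $G$ has countably many equivalence classes of irreducible representations (it is a compact Lie group, hence $\widehat{G}$ is countable). So the second step is to enumerate $\widehat{G} = \{\Theta_1, \Theta_2, \dots\}$, let $d_j = \dim \Theta_j$, apply Theorem 4.5 to each $\Theta_j$ with $d_j \geq 2$ and Theorem 4.8 to each $\Theta_j$ with $d_j = 1$ (the nontrivial ones; the trivial representation is irrelevant since a nonzero solution there would just contradict ergodicity of $T$, already excluded), and take the countable intersection of the resulting full-measure sets. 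The intersection is still of full measure.

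The third step is to observe that a solution $F \in L^2([0,1], \mathbb{C}^d)$ of (6) for an abstract irreducible $\Theta$ of dimension $d$ is, after choosing a unitary equivalence $\Theta \cong \Theta_j$, exactly a solution for $\Theta_j$; isomorphic representations give isomorphic twisted cohomology, so killing solutions for one representative in each equivalence class suffices. Similarly, every nontrivial $\gamma : G \to U(1)$ with the extra rotation parameter $\alpha$ is covered by Theorem 4.8, which already builds in the ``for all $\alpha$'' clause. Hence on the full-measure set constructed above, both conditions of Theorem 4.2 hold for $T_\phi$, and since $T$ is weakly mixing, $T_\phi$ is weakly mixing.

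I expect the main obstacle to be purely bookkeeping rather than conceptual: making sure the countable intersection argument is airtight, in particular that the full-measure set in Theorems 4.5 and 4.8 can be taken uniformly over the choice of representative in each equivalence class of $\widehat G$, and that the passage from ``$\Theta$ abstract'' to ``$\Theta$ a fixed representative'' does not secretly reintroduce an uncountable family. One should also note explicitly that for $n \geq 3$ the case $T$ a rotation does not arise (an IET of $n \geq 3$ intervals that happens to be measurably isomorphic to a rotation lies in a measure-zero set), so no separate ergodic-but-not-weakly-mixing case is needed here. With these points addressed the proof is a short assembly of the preceding theorems.
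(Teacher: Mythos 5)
Your proposal is correct and follows essentially the same route as the paper: Avila--Forni for weak mixing of the base IET, then the Keynes--Newton criterion assembled from Theorems 4.5 and 4.8. The paper states this in two sentences and leaves implicit the countable-intersection over $\widehat{G}$ that you spell out; your added bookkeeping is exactly the right way to make that step airtight.
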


\begin{proof} By the result of Avila and Forni$[1]$ almost all $T=(\lambda, \pi)$ are weakly mixing for $n\geq 3$.  The Keynes-Newton criterion of weak mixing 4.2 for $T_{\phi}$ for a typical $\phi$ then immediately follows from Theorems 4.5 and 4.8.   
\end{proof}

\subsection{General U(1)-extensions and the case of two intervals.}

As Avila and Forni $[1]$ do not treat the case of two intervals, we have to make this case separately. In order to do it we first prove a general theorem 4.10 of independent interest. 
\

\

Let $T:X\rightarrow X$ be a measure preserving ergodic automorphism 
of a probability space $(X, \mu)$ and let $X=X_1\sqcup\dots\sqcup X_n$ be a finite partition of $X$ on measurable sets and ${\mathbb{T}}^n=\{z=(z_1,\dots, z_n)\in{\mathbb{C}}^n | |z_k|=1, 1\leq k\leq n\}$ . Using the partition of $X$ and $z$ we construct a
\textit{simple function} ${\phi}_z: X\rightarrow U(1)$ by formula ${\phi}_z(x)=z_k$, if $x\in X_k$.

\begin{defn}

 $z\in {\mathbb{T}}^n$ is a \textbf{generalized eigenvalue} if there exists  nonzero $f(x)\in L^2(X, \mathbb{C})$ such that:

\begin{equation}
f(Tx)={\phi}_{z}(x)f(x)
\end{equation}
and such an $f$ is called a \textbf{generalized eigenfunction}. 
\end{defn}

\begin{Lemma} Generalized eigenvalues form a multiplicative subgroup of ${\mathbb{T}}^n$.
\end{Lemma}

\begin{proof} If $f_z(x)$ is a generalized eigenfunction for generalized eigenvalue $z$ and $f_w(x)$ is a generalized eigenfunction for generalized eigenvalue $w$, then $f_z(x)f_w(x)$ is a generalized eigenfunction for $zw$ and $\overline{f_z(x)}$ is a generalized eigenfunction for
$\overline{z}$. Here if $z=(z_1,\dots, z_n)$ and $w=(w_1,\dots, w_n)$  then $zw=(z_1w_1,\dots, z_nw_n)$ and $\overline{z}=(\overline{z_1},\dots, \overline{z_n})$ 
\end{proof}

\begin{Theorem}Let $T:X\rightarrow X$ be a measure-preserving transformation of the probability space $(X,\mu)$. Let also $X=X_1\sqcup\dots\sqcup X_n$ be a finite partition of $X$ on measurable sets. For an element $z=(z_1,\dots, z_n)\in {\mathbb{T}}^n$ define 
${\phi}_z:X\rightarrow\mathbb{T}$ by formula ${\phi}_z(x)=z_k$ for $x\in X_k$, $1\leq k\leq n$. Then for almost all $z\in {\mathbb{T}}^n$ with respect to the Lebesgue measure on ${\mathbb{T}}^n$ the cohomological equation:

\begin{equation}
f(Tx)={\phi}_z(x)f(x)
\end{equation}

has only a trivial solution $f(x)=0\in L^2(X, \mathbb{C})$
\end{Theorem}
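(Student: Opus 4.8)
The plan is to show that the set of "bad" $z \in \mathbb{T}^n$ — those for which (18) has a nonzero solution — is a countable union of proper cosets of a fixed proper closed subgroup, hence Lebesgue-null. First I would invoke Lemma 4.4: the set $\mathcal{E}$ of generalized eigenvalues is a multiplicative subgroup of $\mathbb{T}^n$. So it suffices to prove that $\mathcal{E}$, or at least its closure, is a proper subgroup of $\mathbb{T}^n$; combined with the standard fact that a proper closed subgroup of a torus (more generally, any measurable proper subgroup) has Haar/Lebesgue measure zero, this would finish the argument — provided I can also control non-closed subgroups, which I handle below.

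The key structural observation is that a generalized eigenfunction $f$ for $z$ satisfies $|f(Tx)| = |f(x)|$, so after normalizing (using ergodicity of $T$, which I may assume — note the statement omits it, but it is surely intended, since without ergodicity the claim can fail, e.g. $T = \mathrm{id}$) we get $|f| \equiv 1$. Then $f: X \to \mathbb{T}$ and the equation reads $f(Tx)\overline{f(x)} = \phi_z(x)$, i.e. $\phi_z$ is a measurable coboundary twisted into a fixed form. The essential point is that the map $z \mapsto f_z$ can be chosen to depend "measurably/homomorphically enough" that $\mathcal{E}$ is a Borel subgroup of $\mathbb{T}^n$: indeed, the set of pairs $(z, f) \in \mathbb{T}^n \times L^2(X,\mathbb{C})$ (with $\|f\|_2 = 1$) satisfying (18) is Borel, and its projection to $\mathbb{T}^n$ is $\mathcal{E}$ — an analytic, hence universally measurable, set that is a subgroup by Lemma 4.4. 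A measurable proper subgroup of $\mathbb{T}^n$ has measure zero (Steinhaus), so the only alternative to the theorem is $\mathcal{E} = \mathbb{T}^n$, i.e. \emph{every} $z$ is a generalized eigenvalue. I would rule this out by a dimension/Fubini argument: if $\mathcal{E} = \mathbb{T}^n$ then in particular the diagonal circle $\{(\alpha,\dots,\alpha) : \alpha \in \mathbb{T}\}$ consists of eigenvalues, meaning $T$ has every $\alpha \in \mathbb{T}$ as an $L^2$-eigenvalue — impossible for an ergodic transformation of a standard probability space, whose point spectrum is countable. Hence $\mathcal{E} \neq \mathbb{T}^n$, and we are done.

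The main obstacle I anticipate is the measurability of $\mathcal{E}$ as a subset of $\mathbb{T}^n$: a priori the projection of a Borel set is only analytic, and one must be slightly careful to conclude it is Lebesgue-measurable (which it is, analytic sets being universally measurable) before applying the Steinhaus/proper-subgroup dichotomy. An alternative route that avoids descriptive set theory entirely: show directly that $\overline{\mathcal{E}}$ (closure in $\mathbb{T}^n$) is a proper \emph{closed} subgroup. For this one argues that if $z^{(j)} \to z$ with eigenfunctions $f_j$, $\|f_j\|_2 = 1$, one does not in general get convergence of $f_j$, so $\mathcal{E}$ itself need not be closed — which is exactly why the measurable-subgroup argument is the cleaner path. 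I would therefore present the descriptive-set-theoretic version as primary: $\mathcal{E}$ is analytic and a subgroup; if it had positive measure it would be all of $\mathbb{T}^n$ by Steinhaus's theorem applied to its difference set; but that contradicts countability of the point spectrum of the ergodic map $T$ via restriction to the diagonal; therefore $\mathcal{E}$ is null, proving the theorem.
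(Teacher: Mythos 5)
Your proposal is correct and follows essentially the same route as the paper: the eigenvalue set is a subgroup (Lemma 4.4), it is measurable, a measurable subgroup of ${\mathbb{T}}^n$ is either null or everything, and the latter is excluded by restricting to the diagonal and using countability of the point spectrum. The only difference is cosmetic: you justify measurability via analyticity/universal measurability of a projection, whereas the paper proves the eigenvalue set is actually Borel (Lemma 6.3 in the Appendix) and applies the dichotomy to Borel subgroups.
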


\begin{proof}

For given ${z\in\mathbb{T}}^n$ the existence of nonzero solution $f(x)\in L^2(X, \mathbb{C})$ of the cohomological equation:

\begin{equation}
f(Tx)={\phi}_z(x)f(x)
\end{equation}

means that $z$ is a generalized eigenvalue. By Lemma 4.4 the set $K$ of generalized eigenvalues is a subgroup of ${\mathbb{T}}^n$. By Lemma 6.3 $K$ is Borel. It is a classical fact that any Borel subgroup of ${\mathbb{T}}^n$ either has zero measure or coincides with ${\mathbb{T}}^n$.  If $K$ has zero measure, the proof is over. If $K={\mathbb{T}}^n$ this in particular implies that the \textit{diagonal subgroup} $\Delta=\{ z=(z_1,\dots, z_n)\in{\mathbb{T}}^n | z_1=\dots=z_n  \}$ is a subgroup of $K$. But for any $z=(\alpha,\dots, \alpha)\in\Delta$ the generalized eigenfunction ${\phi}_z(x)=\alpha$ and the cohomological equation (18) becomes

\begin{equation}
f(Tx)=\alpha f(x)
\end{equation}

which in turn implies that any $\alpha\in U(1)$ lies in the discrete spectrum of $T$, which is impossible as  the discrete spectrum of $T$ is at most countable.

\end{proof}

As an interesting immediate corollary of Theorem 4.10 ( and Keynes-Newton criterion of ergodicity) we have the following theorem of independent interest:

\begin{Theorem} Let $T$ be any ergodic IET of $n\geq 2$ intervals. Then for typical simple function $\phi: [0, 1]\rightarrow U(1)$ the $G$-extension $T_{\phi}:[0, 1]\times U(1)\rightarrow [0, 1]\times U(1)$ is ergodic.

\end{Theorem}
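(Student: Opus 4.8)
The plan is to verify the two conditions of the Keynes--Newton criterion of ergodicity (Theorem 4.1) for the $U(1)$-extension $T_\phi$, where $T$ is an arbitrary ergodic IET of $n\geq 2$ intervals and $\phi=\phi_z$ is the simple function built from a typical $z\in\mathbb{T}^n$. Since $G=U(1)$ is abelian, there are no irreducible representations of dimension $d\geq 2$, so condition $1)$ of the criterion is vacuous and only condition $2)$ needs checking. The non-trivial unitary representations of $U(1)$ are the characters $\gamma_p(w)=w^p$ for $p\in\mathbb{Z}\setminus\{0\}$, so condition $2)$ amounts to showing that for typical $z$ and every nonzero integer $p$, the cohomological equation $f(Tx)=\gamma_p(\phi_z(x))f(x)=z_1^p,\dots$ (i.e.\ with the simple function $\phi_{z^p}$, where $z^p=(z_1^p,\dots,z_n^p)$) has no nonzero $L^2$ solution.

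First I would observe that Theorem 4.10, applied with $X=[0,1]$, the base transformation the given ergodic IET $T$, and the partition $X=I_1\sqcup\dots\sqcup I_n$ into the exchanged intervals, says precisely that the set of generalized eigenvalues $K\subseteq\mathbb{T}^n$ has Lebesgue measure zero (the alternative $K=\mathbb{T}^n$ is excluded because $T$, being an ergodic IET, has at most countable discrete spectrum). So for a full measure set of $z\in\mathbb{T}^n$, the equation $f(Tx)=\phi_z(x)f(x)$ itself has only the trivial solution. Next I would handle all the powers simultaneously: for each nonzero integer $p$, the map $z\mapsto z^p=(z_1^p,\dots,z_n^p)$ is a measure-preserving (finite-to-one, locally bi-Lipschitz) covering of $\mathbb{T}^n$, hence the preimage under this map of the full measure set from Theorem 4.10 is again full measure. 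Intersecting these countably many full measure sets over $p\in\mathbb{Z}\setminus\{0\}$ yields a full measure set of $z$ for which $f(Tx)=\phi_{z^p}(x)f(x)$ has no nonzero solution for any $p\neq 0$, which is exactly condition $2)$.

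Finally I would conclude by invoking the Keynes--Newton criterion of ergodicity (Theorem 4.1): since condition $1)$ is empty for the abelian group $U(1)$ and condition $2)$ holds for the chosen full measure set of $z$, the $U(1)$-extension $T_{\phi_z}$ is ergodic. Note that this argument does not require $T$ to be weakly mixing, only ergodic, so it covers the case $n=2$ (irrational rotations) as well as $n\geq 3$, which is the point of stating this theorem separately.

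The main subtlety, rather than obstacle, is bookkeeping: one must be careful that ``typical simple function $\phi$'' means typical $z\in\mathbb{T}^n$ in the normalized Lebesgue (equivalently Haar) measure, and that passing to the countable intersection over the characters $\gamma_p$ preserves full measure --- this is where the $p$-th power covering argument is used. There is no analytic difficulty here; Theorem 4.10 already contains the only genuinely nontrivial input (the Borel-subgroup dichotomy on $\mathbb{T}^n$ together with the countability of the discrete spectrum of an ergodic IET), so the present theorem is essentially a clean corollary.
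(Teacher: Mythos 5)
Your proposal is correct and follows exactly the route the paper intends: the paper states this result as an immediate corollary of Theorem 4.10 combined with the Keynes--Newton criterion of ergodicity, with condition $1)$ vacuous for the abelian group $U(1)$. The only detail you supply that the paper leaves implicit is the reduction of all nontrivial characters $\gamma_p$ to the $p=1$ case via the measure-preserving power map $z\mapsto z^p$ on $\mathbb{T}^n$ and a countable intersection, which is a correct and necessary bookkeeping step.
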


Here as usually the word \textit{typical} means Haar measure on $U(1)$. In Theorem 4.10 the \textit{ergodicity} of $T$ is the precise requirement, which clearly can not be weakened. It is an interesting question if ergodicity of an IET $T$ is enough to guarantee the ergodicity of a typical $G$-extension over $T$ in case of a general compact connected Lee group $G$. Let now $T$ be an interval exchange of $2$ intervals, characterized by parameter $\lambda\in [0, 1)$.

\begin{Theorem}

Let $\gamma: G\rightarrow U(1)$ be a nontrivial representation of $G$. Then for almost all triples $(\lambda, g_1, g_2)\in [0, 1)\times G^2$ the equation:

\begin{equation}
f(Tx)=\gamma(\phi(x))f(x)
\end{equation}

\

has only a trivial solution $f(x)=0\in L^2([0, 1], \mathbb{C})$

\end{Theorem}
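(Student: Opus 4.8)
The plan is to reduce Theorem 4.12 to the case of an irrational rotation and then handle that case directly, since Theorem 4.10 and hence Theorem 4.11 already cover the ergodic case. First I would observe that an IET of two intervals with parameter $\lambda$ is exactly the rotation $x \mapsto x + \lambda \pmod 1$: it is ergodic precisely when $\lambda$ is irrational, and for rational $\lambda$ it is periodic and the extension is never ergodic, so nothing needs proving there. Thus I may restrict to irrational $\lambda$, for which $T$ is an ergodic rotation, and apply Theorem 4.11 (or directly Theorem 4.10) to conclude that for almost every $(g_1,g_2)$ the simple $U(1)$-valued function $\Xi(x) = \gamma(\phi(x))$ — which takes the two values $\gamma(g_1)$ on $I_1$ and $\gamma(g_2)$ on $I_2$ — is such that equation (21) has only the trivial solution, as long as $(\gamma(g_1),\gamma(g_2))$ lies outside the measure-zero exceptional set in $\mathbb{T}^2$ produced by Theorem 4.10.

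The second step is to transfer the genericity from $\mathbb{T}^2 = U(1)\times U(1)$ back to $G^2$, exactly as in the proof of Theorem 4.8: the map $\rho = \gamma \times \gamma : G^2 \to U(1)\times U(1)$ is a locally trivial fiber bundle (because $\gamma: G \to U(1)$ is a continuous nontrivial homomorphism of compact connected Lie groups, hence a submersion onto its image, and $\gamma$ surjects onto $U(1)$ since $U(1)$ has no proper closed connected subgroups of positive dimension and the image of $\gamma$ is a nontrivial closed connected subgroup). Consequently the preimage under $\rho$ of a full-measure subset of $U(1)^2$ has full measure in $G^2$. Combining this with the first step gives: for almost every $\lambda \in [0,1)$ (namely all irrational $\lambda$) and almost every $(g_1,g_2) \in G^2$, equation (21) has only the trivial solution, which is the assertion of the theorem after noting that a full-measure set of $\lambda$ times a full-measure set of $(g_1,g_2)$ is full measure in $[0,1) \times G^2$ by Fubini.

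I would organize the write-up as: (i) dispose of the rational case; (ii) for irrational $\lambda$ invoke Theorem 4.11 applied to the auxiliary $U(1)$-valued simple function $\Xi$, pulling the exceptional set in $\mathbb{T}^2$ back through $\rho$; (iii) conclude by Fubini. The only subtlety worth spelling out is why Theorem 4.10/4.11 applies here even though $\alpha$ does not appear: equation (21) is (18) with $\phi_z = \Xi$ and $z = (\gamma(g_1),\gamma(g_2))$, and Theorem 4.10 says that for a.e. such $z$ there is no nonzero solution, so we need $(\gamma(g_1),\gamma(g_2))$ to avoid the Theorem 4.10 exceptional null set.

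The main obstacle I anticipate is not the rotation case itself — that is essentially immediate from Theorem 4.11 — but making precise the claim that $\rho = \gamma\times\gamma$ is a fiber bundle, in particular that $\gamma$ is surjective onto $U(1)$ and that preimages of null sets under it are null. This is the same point invoked in Theorem 4.8, so I would either cite that proof verbatim or, to be safe, add one sentence: $\ker\gamma$ is a closed normal subgroup, $G/\ker\gamma \cong U(1)$ as Lie groups, and the quotient map is a principal $\ker\gamma$-bundle, so it carries Haar measure on $G$ to Haar measure on $U(1)$ and pulls null sets back to null sets; the same then holds for $\gamma\times\gamma$ on $G^2$. With that in hand the proof is three short paragraphs.
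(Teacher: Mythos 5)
Your proposal is correct and follows essentially the same route as the paper: restrict to irrational $\lambda$, apply Theorem 4.10 to the auxiliary $U(1)$-valued simple function $\Xi=\gamma\circ\phi$, and pull the full-measure set in $U(1)\times U(1)$ back through the locally trivial fiber bundle $\rho=\gamma\times\gamma$. The extra justification you supply for the surjectivity of $\gamma$ and the null-set-preserving property of $\rho$ is a welcome elaboration of a point the paper leaves implicit, but it is not a departure from the paper's argument.
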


\begin{proof}
Given a triple $(\lambda, g_1, g_2)\in [0, 1)\times G^2$  with irrational $\lambda$ define a function $\Xi:[0,1]\rightarrow U(1)$ as $\Xi(x)=\gamma(\phi(x))$. By Theorem 4.10 there is a full measure set $P\in U(1)\times...\times U(1)$ such that for any $\alpha$ the equation
\begin{equation}
f(Tx)=\Xi(x)f(x)
\end{equation}

has only a trivial solution $f(x)=0\in L^2([0, 1], \mathbb{C})$. As the projection map $\rho: G^n\rightarrow {[U(1)]}^n$ is a locally trivial fiber bundle, so ${\rho}^{-1}(P)$ has a full measure. The proof is complete.
\end{proof}
\

For an interval exchange $T$ of 2 intervals, characterized by parameter $\lambda\in [0, 1)$ and a pair $g=(g_1, g_2)\in G^2)$ we construct a simple function $\phi$ and $T_{\phi}$.
\begin{Theorem} For almost all pairs $(\lambda, g)\in [0, 1)\times G^2$ the $G$-extension $T_{\phi}$ is ergodic.

\end{Theorem}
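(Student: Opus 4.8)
The plan is to deduce this from the two--interval instances of the machinery already assembled, via the Keynes--Newton ergodicity criterion (Theorem 4.1). First I would record that an interval exchange of two intervals with parameter $\lambda$ is, after the obvious reparametrization of $[0,|\lambda|]$, a circle rotation by an angle determined by $\lambda$; hence for almost every $\lambda\in[0,1)$ the base map $T$ is an irrational rotation and in particular ergodic. Let $\Omega_0\subseteq[0,1)\times G^2$ denote the corresponding full--measure set of pairs $(\lambda,g)$. Note that here one can only hope for ergodicity and not weak mixing, since an irrational rotation has nontrivial discrete spectrum --- which is exactly what the statement claims. It then remains to arrange that both conditions of Theorem 4.1 hold for almost every $(\lambda,g)$.

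For condition (1), concerning irreducible representations $\Theta\colon G\to U(d)$ with $d\geq 2$, I would invoke Theorem 4.5 with $n=2$; this is legitimate since that theorem is stated for all $n\geq 2$, and for $n=2$ the permutation is the single transposition, fixed under Rauzy--Veech induction, so the argument there (ruling out the exceptional set $D$ for every sequence of Rauzy maps $A,B$ on $G^2$) applies verbatim. Because $S^0_2$ is a one--point set, ``almost all triples in $\Delta_1\times S^0_2\times G^2$'' is the same as ``almost all pairs in $[0,1)\times G^2$'', so for each fixed such $\Theta$ we get a full--measure set of $(\lambda,g)$ carrying no nonzero solution of $F(Tx)=\Theta(\phi(x))F(x)$. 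Since a compact connected Lie group has only countably many equivalence classes of irreducible unitary representations, and equivalent representations yield equivalent cohomological equations, intersecting these full--measure sets over one representative of each class with $d\geq 2$ produces a full--measure set $\Omega_1$ on which condition (1) holds. For condition (2), concerning nontrivial characters $\gamma\colon G\to U(1)$, I would simply cite Theorem 4.12, which is precisely the two--interval statement required: for each nontrivial $\gamma$ it gives a full--measure set of $(\lambda,g)\in[0,1)\times G^2$ with no nonzero solution of $f(Tx)=\gamma(\phi(x))f(x)$. The characters of $G$ form the countable character group of the torus $G/\overline{[G,G]}$, so intersecting over the countably many nontrivial $\gamma$ yields a full--measure set $\Omega_2$.

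The conclusion would then be immediate: on $\Omega_0\cap\Omega_1\cap\Omega_2$, a set of full measure, the base $T$ is ergodic and both hypotheses of the Keynes--Newton criterion are satisfied, so $T_\phi$ is ergodic. All of the genuinely analytic work is carried by Theorems 4.5 and 4.12, so there is no substantial obstacle; the only steps worth double--checking are the bookkeeping items --- that $n=2$ really falls within the scope of Theorems 4.5 and 4.12, and that the two countable intersections above are legitimate, which rests on the standard fact that a compact connected Lie group has only countably many irreducible representations and only countably many one--dimensional characters.
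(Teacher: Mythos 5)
Your proposal is correct and follows essentially the same route as the paper: the paper's proof likewise observes that $T$ is ergodic for typical $\lambda$ and then deduces the Keynes--Newton criterion from Theorems 4.5 and 4.12. You merely make explicit the bookkeeping (the $n=2$ case of Theorem 4.5 and the countable intersection over irreducible representations and characters) that the paper leaves implicit.
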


\begin{proof} $T$ is ergodic for typical $\lambda$.  The Keynes-Newton criterion of ergodicity 4.1 for $T_{\phi}$ for a typical $\phi$ then immediately follows from Theorems 4.5 and 4.12.   
\end{proof}

\textbf{Remark}. In the assumptions of Theorem 4.12 the irrationality of $\alpha$ ( i.e ergodicity of $T$) is a necessary and sufficient condition. It is Theorem 4.5 dealing with higher-dimensional representations of $G$ does not allow to use only irrationality of 
$\alpha$ and instead makes us use a weaker assumption that $\alpha$ is typical. We strongly believe that the conclusion of Theorem 4.13 in fact holds for all irrational $\alpha$.

\subsection{Convergence to Haar measure for IET-generated random walk on G}
Given an ergodic transformation $T:X\rightarrow X$ of a probability space $X$, arbitrary partition of $X$ on $n$ measurable sets $X=X_1\sqcup X_2\sqcup\dots\sqcup X_n$ and a group $G$ one can define the $T$-generated random walk on $G$.  Namely for $x\in X$ and $g=(g_1, g_2, \dots, g_n)\in G^n$  one first creates an infinite word $w_x=w^1w^2\dots w^k\dots$ in the alphabet $\{1, 2,\dots, n\}$ by coding the trajectory of $x$ with respect to the partition 
$X=X_1\sqcup X_2\sqcup\dots\sqcup X_n$.  Then one creates an infinite sequence $\{a_x^k\}$ of elements of $G$, using $w^x$ and $g$ . Namely $a_x^k=g_{w^k}\cdot g_{w^{k-1}}\cdots g_{w^1}$. One also has a sequence of finitely supported measures 
${\mu}_x^k$ on $G$ by averaging Dirac measures along $\{a_x^k\}$, more precisely: ${\mu}_x^k=\frac{1}{k}({\delta}_{a_x^1}+{\delta}_{a_x^2}+\dots +{\delta}_{a_x^k})$.

An ergodic IET $T$ gives a natural partition of $[0, 1]$ on intervals $I_1,\dots, I_n$ and so construction above applies.  As an immediate consequence of ergodicity of $T_{\phi}$ for typical simple $G$-extension we have the following theorem.

\begin{Theorem} Let $G$ be a compact connected Lie group. Let $n\geq 2$ and $g=(g_1, g_2, \dots, g_n)\in G^n$. For almost all triples $(\lambda, \pi, g)\in {\Delta}_{n-1}\times S_n^0\times G^n$ and for almost all $x\in [0, 1]$ an IET-generated sequence ${\mu}_x^k$ converges to the normalized Haar measure $\mu$ on $G$.
\end{Theorem}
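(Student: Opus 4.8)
The plan is to deduce Theorem 4.14 directly from the ergodicity results for typical simple $G$-extensions (Theorems 4.9 and 4.13) together with the Birkhoff ergodic theorem, exactly in the spirit of the remark preceding the statement. First I would fix a triple $(\lambda,\pi,g)$ in the full-measure set where $T_\phi$ is ergodic: for $n\geq 3$ this is Theorem 4.9, and for $n=2$ it is Theorem 4.13, so the union covers all $n\geq 2$. The key observation linking the random walk to the $G$-extension is that the partial product $a_x^k=\phi(T^{k-1}x)\cdots\phi(x)$ is precisely the group component of $T_\phi^k(x,e)$; that is, $T_\phi^k(x,e)=(T^kx,\,a_x^k)$. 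Hence for any continuous test function $h\in C(G)$,
\[
\int_G h\,d\mu_x^k \;=\; \frac{1}{k}\sum_{j=1}^{k} h(a_x^j)\;=\;\frac{1}{k}\sum_{j=1}^{k} H\bigl(T_\phi^{\,j}(x,e)\bigr),
\qquad H(y,u):=h(u).
\]

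The main step is then to apply the Birkhoff ergodic theorem to the ergodic system $(X\times G,\mu\times\nu,T_\phi)$ and the integrable observable $H$. Ergodicity gives, for $(\mu\times\nu)$-almost every $(x,u)$,
\[
\frac{1}{k}\sum_{j=1}^{k} H\bigl(T_\phi^{\,j}(x,u)\bigr)\;\longrightarrow\;\int_{X\times G} H\,d(\mu\times\nu)\;=\;\int_G h\,d\nu .
\]
The remaining issue is that the statement asks for convergence starting from the \emph{specific} fibre point $u=e$, whereas Birkhoff only gives a conclusion for almost every $(x,u)$. To fix the fibre, I would use the fact that $T_\phi$ commutes with right translations $R_v:(x,u)\mapsto(x,uv)$ on $X\times G$, which preserve $\mu\times\nu$; since $a_x^k$ for base point $(x,v)$ equals the base-point-$(x,e)$ product times nothing on the left (the cocycle only depends on $x$), one checks that the Birkhoff average from $(x,v)$ equals that from $(x,e)$ for \emph{the same} $x$. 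Therefore the set of $x$ for which the average from $(x,e)$ converges to $\int h\,d\nu$ has, by Fubini applied to the full-measure Birkhoff set, full $\mu$-measure; intersecting over a countable dense family $\{h_i\}\subseteq C(G)$ and invoking that weak-$*$ convergence of probability measures on the compact group $G$ is tested on such a countable family, we get $\mu_x^k\to\nu$ weakly for $\mu$-a.e.\ $x$. Finally, applying Fubini once more over the full-measure parameter set of triples $(\lambda,\pi,g)$ yields the claim for almost all $(\lambda,\pi,g,x)$, i.e.\ the statement.

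The main obstacle I anticipate is the fibre-reduction argument just sketched: one must be careful that the cocycle $a_x^k$ is built from $\phi$ alone and acts by left multiplication, while the invariance used to move the base point is right multiplication, so the two operations genuinely commute and the Birkhoff average is independent of the $G$-coordinate of the starting point. Once that is pinned down, everything else is a routine application of Birkhoff plus Fubini plus the metrizability of the weak-$*$ topology on $\mathcal{P}(G)$; no new analytic input beyond the ergodicity theorems already proved is required. (One should also note that the measure $\nu$ is denoted $\mu$ in the statement of Theorem 4.14 by a harmless abuse; it is the normalized Haar measure on $G$.)
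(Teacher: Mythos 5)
Your proposal is correct and follows exactly the route the paper intends: the paper gives no written proof of this theorem, asserting only that it is an ``immediate consequence'' of the ergodicity of $T_\phi$ established in Theorems 4.9 and 4.13, and your Birkhoff-plus-Fubini argument with the identification $T_\phi^k(x,e)=(T^kx,a_x^k)$ is precisely the standard way to make that assertion rigorous. One small correction to your fibre-reduction step: the Birkhoff average of $H(y,u)=h(u)$ started at $(x,v)$ is $\frac{1}{k}\sum_{j}h(a_x^j v)$, which is \emph{not} equal to the average started at $(x,e)$ for a fixed test function $h$; what is true is that the empirical measures from $(x,v)$ are the right-translates by $v$ of the measures $\mu_x^k$, so convergence to Haar for one is equivalent to convergence to Haar for the other by right-invariance of $\nu$, and the conclusion for a.e.\ $x$ then follows by Fubini as you describe.
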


\section{Typical simple G-extensions are not measurably cohomologous.}
\subsection{ \textit{Cohomological equivalence of extensions}}

\begin{defn} Let $\phi:X\rightarrow G$ and $\psi: X\rightarrow G$ be two measurable functions from probability space $(X,\mu)$ to the compact Lie group $G$. Let also $T:X\rightarrow X$ be an ergodic transformation. The extensions $T_{\phi}$ and $T_{\psi}$ are said to be \textbf{measurably cohomologous} if there exists a measurable function $f:X\rightarrow G$ such that the equation 

\begin{equation}
f(Tx)\cdot\phi(x)=\psi(x)\cdot f(x)
\end{equation}

holds almost everywhere on $X$.  
\end{defn}

Existence of a measurable cohomology implies that $T_{\phi}$ and $T_{\psi}$ are isomorphic as measure-preserving maps. More precisely the  map 
$F: X\times G\rightarrow X\times G$, given by equation $F(x,g)=(x, f(x)g)$ satisfies $F\circ T_{\phi}= T_{\psi}\circ F$. 

\begin{Lemma} Let $G$ be a compact connected Lie group with Haar measure $\nu$ and $\overline{S}=\{(g, h) | h=aga^{-1}$ for some $a\in G\}$. Then $\overline{S}$ is a compact subset of $G\times G$ of measure zero.

\end{Lemma}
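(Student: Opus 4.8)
The plan is to treat compactness and nullity separately, reducing the nullity claim to a dimension count on conjugacy classes via Fubini.

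\textbf{Compactness.} First I would consider the continuous map $c\colon G\times G\to G\times G$, $c(a,g)=(g,aga^{-1})$. Its image is exactly $\overline{S}$, so $\overline{S}$ is the continuous image of the compact space $G\times G$, hence compact; in particular it is closed, and therefore Borel, which is all that is needed to invoke Fubini below.

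\textbf{Reduction by slicing.} For fixed $g\in G$ the slice $\overline{S}_g=\{h\in G\mid (g,h)\in\overline{S}\}$ is precisely the conjugacy class $C_g=\{aga^{-1}\mid a\in G\}$. Since $\overline{S}$ and all its slices are closed, hence measurable, Fubini's theorem gives
\[
(\nu\times\nu)(\overline{S})=\int_G\nu(C_g)\,d\nu(g),
\]
so it suffices to show $\nu(C_g)=0$ for every $g\in G$.

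\textbf{Conjugacy classes are null.} The set $C_g$ is the orbit of $g$ under the smooth conjugation action of $G$, hence is diffeomorphic to $G/Z_G(g)$ and, $G$ being compact, is an embedded submanifold of $G$ of dimension $\dim G-\dim Z_G(g)$. Every element of a compact connected Lie group lies in some maximal torus $T$; as $T$ is abelian, $T\subseteq Z_G(g)$, so $\dim Z_G(g)\geq\dim T\geq 1$ (a positive-dimensional compact connected Lie group has a nontrivial maximal torus; the degenerate case $G=\{e\}$ is vacuous here, since then $G$ admits no nontrivial representations). Thus $C_g$ has strictly positive codimension in $G$, and any submanifold of positive codimension is Haar-null; hence $\nu(C_g)=0$, and the displayed identity yields $(\nu\times\nu)(\overline{S})=0$.

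I do not expect a serious obstacle: the only point requiring care is the codimension estimate, namely the fact that the centralizer $Z_G(g)$ always contains a maximal torus, which is exactly what forces $C_g$ to have strictly positive codimension and hence measure zero. The measurability bookkeeping for Fubini is handled automatically by the compactness of $\overline{S}$ and of its slices.
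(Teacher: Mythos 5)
Your proof is correct, and its skeleton --- compactness as a continuous image of $G\times G$, then Fubini over the slices $\overline{S}_g=C_g$, then nullity of each conjugacy class --- is the same as the paper's. Where you diverge is in the crucial step, the proof that $\nu(C_g)=0$. The paper writes $g=\exp(X)$ with $X\neq 0$ for $g\neq e$ (using surjectivity of $\exp$ on a compact connected group), observes that the self-map $u\mapsto ugu^{-1}$ of $G$ is constant along the curves $t\mapsto u\exp(tX)$, concludes that every point of $G$ is a critical point, and invokes Sard's theorem to make the image null. You instead identify $C_g$ with the compact homogeneous space $G/Z_G(g)$ embedded in $G$ and bound its codimension below by $\dim Z_G(g)\geq\dim T\geq 1$ via the maximal torus theorem. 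The two arguments are close in spirit --- both ultimately exhibit a positive-dimensional subgroup of the centralizer (the paper's one-parameter subgroup $\exp(tX)$ lies in $Z_G(g)$ as well) --- but yours trades Sard's theorem for the orbit--stabilizer manifold structure together with the fact that an embedded submanifold of positive codimension is Haar-null, and it handles $g=e$ uniformly rather than as an implicit special case. Both proofs need $G$ nontrivial (for trivial $G$ the statement fails outright), which you correctly flag as the degenerate case.
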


\begin{proof} The compactness of $\overline{S}$ obviously follows from compactness of $G$. It is enough to prove that $\nu(\overline{S})=0$. Let us fix any $g\in G$. If we prove that $\overline{S}_g=\{aga^{-1}, a\in G\}$ as a subset of $G$ has measure zero, then Fubini theorem applied to $G\times G$ would complete the proof.
\

\

For a fixed element $g\in G$ consider a smooth map $f:G\rightarrow G$ given by $f(u)=ugu^{-1}$. As $G$ is compact, there is an element 
$X\in\mathfrak{g}$, the Lie algebra of $G$, such that $g=exp(X)$. Clearly if $g\neq 1$ then $X\neq 0$ and $exp(X)exp(tX)=exp(tX)exp(X)$ for any $t\in\mathcal{R}$. Now pick any $u\in G$ and consider the sequence of elements $u_n=u\cdot exp(X/n)$. Then $u_n\rightarrow u$ and $f(u_n)=f(u)$. This implies that any $u\in G$ is a critical value of $f$. By Sard's Theorem $\nu(f(G))=\nu(\overline{S}_g)=0$.  

\end{proof}
Let $g=(g_1, \dots, g_n)\in G^n$ , $h=(h_1,\dots, h_n)\in G^n$, and let also 
\

$S=\{(g, h)\in G^n\times G^n | (h_1, g_1)\in\overline{S}\}$. Trivial consequence of Lemma 5.1 is that $S\in G^n\times G^n$ is a compact set of measure zero.

\begin{Lemma} For any biinvariant metric \textbf{d} on $G$ let 
$\{(g^m, h^m)\}\in G^n\times G^n$, $\{a^m\}\in G$ be sequences of elements such that $d(a^mg^m_1{(a^m)}^{-1}, h^m_1)\rightarrow 0.$ Then $d((g^m, h^m), S)\rightarrow 0$. 
\end{Lemma}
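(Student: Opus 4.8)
The plan is to mimic closely the compactness argument used for Lemma 4.2, where the statement $\|\Theta(g_k^m)w^m - w^m\|\to 0$ was upgraded to $d(g^m,S)\to 0$ by passing to subsequences. Here the role of the unitary representation is played by the conjugation action, and the role of the fixed unit vector $w$ is played by the auxiliary sequence $\{a^m\}$. Assume for contradiction that $d((g^m,h^m),S)\not\to 0$; passing to a subsequence we may assume $d((g^m,h^m),S)\geq\epsilon$ for some $\epsilon>0$, so the pairs $(g^m,h^m)$ lie in the compact set $(G^n\times G^n)\setminus S_\epsilon$, where $S_\epsilon$ is the open $\epsilon$-neighborhood of $S$. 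By compactness of $G^n\times G^n$, after a further subsequence $(g^m,h^m)\to(g,h)$ with $(g,h)\notin S_\epsilon$, in particular $(h_1,g_1)\notin\overline{S}$, i.e.\ $h_1$ is \emph{not} conjugate to $g_1$.

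Next I would exploit compactness of $G$ itself to extract a convergent subsequence $a^m\to a\in G$. Then I would estimate, exactly as in Lemma 4.2, using the triangle inequality and the fact that conjugation $x\mapsto axa^{-1}$ and left/right translations are isometries for the biinvariant metric $d$:
\begin{equation}
d(a g_1 a^{-1}, h_1) \leq d(a g_1 a^{-1}, a g_1^m a^{-1}) + d(a g_1^m a^{-1}, a^m g_1^m (a^m)^{-1}) + d(a^m g_1^m (a^m)^{-1}, h_1^m) + d(h_1^m, h_1).
\end{equation}
The first and fourth terms tend to zero since $g_1^m\to g_1$, $h_1^m\to h_1$ and conjugation/identity are continuous; the third term tends to zero by hypothesis; and the second term tends to zero because multiplication $G\times G\times G\to G$, $(a,x,b)\mapsto a x b$, is (uniformly) continuous on the compact group $G$ and $(a^m,g_1^m,(a^m)^{-1})\to(a,g_1,a^{-1})$. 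Hence $d(ag_1a^{-1},h_1)=0$, so $h_1=ag_1a^{-1}$, which means $(h_1,g_1)\in\overline{S}$ and therefore $(g,h)\in S$ — contradicting $(g,h)\notin S_\epsilon$.

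The only mild subtlety, and the step I would be most careful about, is the joint continuity of the "twisted" translation used to kill the second term: one needs $d(a g_1^m a^{-1}, a^m g_1^m (a^m)^{-1})\to 0$, which does not follow from $a^m\to a$ alone applied to a fixed argument, but does follow from uniform continuity of the ternary product map on the compact group $G$ (or equivalently from writing the difference as $a^m(\cdot)(a^m)^{-1}$ versus $a(\cdot)a^{-1}$ and bounding via the modulus of continuity, uniform over the compact set $\{g_1^m\}\cup\{g_1\}$). Everything else is a verbatim repetition of the subsequence-extraction scheme already established for Lemmas 4.1 and 4.2, now with $\overline{S}$, $S$ in place of the fixed-vector sets, so no new ideas beyond compactness are required.
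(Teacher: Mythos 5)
Your proposal is correct and is exactly the argument the paper intends: the paper's proof of this lemma consists of the single line ``The proof is identical to the proof of Lemma 4.2,'' and your subsequence-extraction scheme (compactness of $G^n\times G^n$ and of $G$, triangle inequality, biinvariance of $d$) is precisely that proof transplanted to the conjugation setting, with the extra convergent subsequence $a^m\to a$ playing the role of $w^m\to w$. The point you flag about uniform continuity of the ternary product on the compact group is handled correctly and is the only place where the adaptation is not literally verbatim.
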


\begin{proof} The proof is identical to the proof of Lemma 4.2.
\end{proof}

\begin{Theorem} For a full measure set $\Omega$ of IETs, $\Omega\subseteq{\mathbb{R}}^n_+\times S^0_n$, $n\geq 2$ and for all $(g,h)\in G^n\times G^n$ the following property takes place.
\
Let $\phi(x):[0, |\lambda|]\rightarrow G$ and $\psi(x):[0, |\lambda|]\rightarrow G$ be simple functions, constructed by $(g,h)$. Assume that for $T\in\Omega$ the equation 

\begin{equation} 
f(Tx)\cdot\phi(x)=\psi(x)\cdot f(x)
\end{equation}
\

has a measurable solution $f:[0, 1]\rightarrow G$. Denote $({\lambda}^m, {\pi}^m, g^m)= {\mathcal{R}}^m(\lambda, \pi, g)$ and $({\lambda}^m, {\pi}^m, h^m)= {\mathcal{R}}^m(\lambda, \pi, h)$.  Then there exists an infinite set $E(T)\subseteq{\mathbb{Z}}_+$ and a sequence $a^m\in G$, such that $d(a^m\cdot g^m_1\cdot {(a^m)}^{-1},h^m_1)\rightarrow 0$, for $m\in E(T)$.
  
\end{Theorem}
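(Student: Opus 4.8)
The plan is to transcribe the proof of Theorem 4.4, replacing the $\mathbb{C}^d$-valued eigenfunction by the $G$-valued transfer function $f$ and the left-multiplication action $w\mapsto\Theta(g)w$ on $\mathbb{C}^d$ by the two-sided action $X\mapsto h\,X\,g^{-1}$ on matrices. First I would fix a faithful unitary embedding $G\hookrightarrow U(N)\subseteq M_N(\mathbb{C})$, so that $f:[0,1]\to G$ may be viewed as a bounded function into $\mathbb{C}^{N^2}$ with $\|f(x)\|\equiv 1$; on the compact set $G$ the restriction of the Euclidean norm is bi-Lipschitz equivalent to any bi-invariant metric $d$, and both norms are invariant under left and right translation by elements of $G$. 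Rewriting the cohomological equation as $f(Tx)=\psi(x)f(x)\phi(x)^{-1}$ and iterating gives $f(T^kx)=\Psi_k(x)f(x)\Phi_k(x)^{-1}$, where $\Phi_k(x)=\phi(T^{k-1}x)\cdots\phi(x)$ and $\Psi_k(x)=\psi(T^{k-1}x)\cdots\psi(x)$; by the very definition of $\mathcal{R}$, for $x$ in the first subinterval $I_1^m$ of $I^m$ and $l_1^m$ its first return time into $I^m$ one has $\Phi_{l_1^m}(x)=g_1^m$ and $\Psi_{l_1^m}(x)=h_1^m$.

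Now take $\Omega$ to be the Veech full-measure set of Theorem 4.3 (it depends only on $n$, not on $(g,h)$), fix $T\in\Omega$ together with its infinite set $E$ of indices $m$ for which $P_1(\epsilon,m)$ and $P_2(\epsilon,m)$ hold, and fix a small $\delta>0$. For $m\in E$, $P_1$ gives an interval $I^m=[0,|\lambda^m|]$ whose translates $T^jI^m$ ($0\le j<b$, $b\ge\epsilon|\lambda|/|\lambda^m|$) avoid every discontinuity $\beta_i$ and cover a set of measure $\ge\epsilon|\lambda|$; in particular $\phi,\psi$ are constant on each $T^jI^m$ and $T^k|_{I^m}$ is a single translation for $k\le b$. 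Applying Lemma 7.1 to $f$ viewed in $\mathbb{C}^{N^2}$ over the family $\{T^jI^m\}_{j<b}$, and replacing the approximate value by its nearest point in $G$ (which at most doubles the $L^1$ error, since $f(x)\in G$), I get, for all large $m\in E$, some $0\le k<b$ and $a\in G$ with $\int_{T^kI^m}\|f(x)-a\|\,dx<\delta|I^m|$. Pulling this back along the measure-preserving translation $T^k|_{I^m}$ and using that $\Phi_k\equiv\Phi^m$, $\Psi_k\equiv\Psi^m$ on $I^m$, bi-invariance of the norm yields $\int_{I^m}\|f(x)-a^m\|\,dx<\delta|I^m|$ with $a^m:=(\Psi^m)^{-1}a\,\Phi^m\in G$.

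From here the endgame is exactly that of Theorem 4.4: by Chebyshev the set $B^m=\{x\in I^m:\|f(x)-a^m\|\ge\sqrt{\delta}\}$ has measure $\le\sqrt{\delta}|I^m|$; since $T^{l_1^m}$ maps $I_1^m$ isometrically into $I^m$, both $B^m\cap I_1^m$ and $(T^{l_1^m}|_{I_1^m})^{-1}(B^m)$ have measure $\le\sqrt{\delta}|I^m|$, whereas $|I_1^m|\ge\epsilon|I^m|$ by $P_2$; hence if $\delta<(\epsilon/2)^2$ there is $x\in I_1^m$ with $\|f(x)-a^m\|<\sqrt{\delta}$ and $\|f(T^{l_1^m}x)-a^m\|<\sqrt{\delta}$. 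Substituting $f(T^{l_1^m}x)=h_1^m f(x)(g_1^m)^{-1}$ and using bi-invariance twice gives $\|a^m g_1^m(a^m)^{-1}-h_1^m\|<2\sqrt{\delta}$, hence $d(a^m g_1^m(a^m)^{-1},h_1^m)\le C\sqrt{\delta}$. Letting $\delta$ run through a sequence tending to $0$ and extracting for each $\delta$ a sufficiently large $m\in E$ produces the required infinite set $E(T)$ and sequence $a^m\in G$, by Lemma 5.2 (this is the cohomological analogue of Lemma 4.2) these $a^m$ then force $d((g^m,h^m),S)\to 0$ in the later applications.

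The step I expect to be the main obstacle is the $G$-valued near-constancy input — that a merely measurable $f:[0,1]\to G$ must be $L^1$-close to some constant element of $G$ on one of the translates $T^jI^m$. This is exactly what forces the detour through an embedding $G\hookrightarrow U(N)$ so as to reuse the scalar Lemma 7.1; the alternative is to prove it directly by covering $G$ with finitely many small balls, applying the Lebesgue density theorem to their $f$-preimages, and combining with the covering bound $|\bigcup_{j<b}T^jI^m|\ge\epsilon|\lambda|$ and $|I^m|\to 0$ as in the scalar case. Once that is in hand the only genuine change from Theorem 4.4 is cosmetic: the isometry group acting on the target is now $G\times G$ via $(a,b)\cdot X=aXb^{-1}$, and the fixed-vector condition "$\Theta(g_k^m)w^m=w^m$" of Theorem 4.4 is replaced by the conjugacy condition "$a^m g_1^m(a^m)^{-1}=h_1^m$" that appears asymptotically here.
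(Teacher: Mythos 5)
Your proposal follows the paper's proof essentially verbatim: Veech properties $P_1,P_2$ on the set $\Omega$ of Theorem 4.3, the simple approximation lemma on a translate $T^kI^m$, pulling back to $I^m$ via bi-invariance and the constancy of the Birkhoff products of $\phi,\psi$, Chebyshev plus $P_2$ to find $x\in I_1^m$ with both $f(x)$ and $f(T^{l_1^m}x)$ near a common value, and then the conjugacy estimate. The only (immaterial) differences are that the paper takes $a^m=f(x)$ rather than the approximate constant $w^m$, and that your detour through an embedding $G\hookrightarrow U(N)$ is unnecessary since the paper's approximation lemma in the Appendix is already stated for maps into an arbitrary compact Riemannian manifold.
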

\

\begin{proof}

Let us fix $\delta>0$ and let $\Omega$ be a full measure set of ergodic IETs satisfying the conclusion of Theorem 4.3. Pick $T\in\Omega$ and a pair $T_{\phi}$ and $T_{\psi}$ and assume that there exists a measurable function $f:X\rightarrow G$ satisfying equation $(23)$. Let $E\in{\mathbb{Z}}_+$  be an infinite set such that for each $m\in E$ , $T$ satisfies  $P_1(\epsilon, m)$ and $P_2(\epsilon, m)$.  If $m\in E$ and $I^m=[0, |{\lambda}^m|]$ then $P_1(\epsilon,m)$ implies  that $T^kI^m$ is an interval for $0\leq k<b$ ( $b$ depends on $m$) and also that $|{\cup}_{k=0}^{b-1}T^kI^m|\geq\epsilon|\lambda|$. As $m\rightarrow\infty$, $|I^m|\rightarrow 0$; therefore, if $m\in E$ is sufficiently large, then, by Lemma 6.1 there exist $k$ and $w\in G$ such that $0\leq k<b$ and

\begin{equation}
{\int}_{T^kI^m}d(f(x), w) dx<\delta|T^kI^m|
\end{equation}

Iterating equation (23) along the piece of trajectory $x, Tx, \dots, T^{n-1}x$ we obtain an equation 

\begin{equation}
f(T^nx)\cdot\phi(T^{n-1}x)\cdots\phi(x)=\psi(T^{n-1}x)\cdots\psi(x)\cdot f(x)
\end{equation}

\

Since $k<b$ the products $\phi(T^{k-1}x)\cdots\phi(x)$ and $\psi(T^{k-1}x)\cdots\psi(x)$ are independent on $x\in I^m$.  As $d$ is a bi-invariant metric it follows that there exists $w^m\in G$ such that

\begin{equation}
{\int}_{I^m} d(f(x), w^m)dx<\delta|I^m|
\end{equation}
\

Let $l_k^m$ denote the first return time of $I_k^m$ into $I^m$. We have the equation:

\begin{equation}
 f(T^{l_k^m}x)\cdot\phi(T^{l_k^m-1}x)\cdots\phi(x)=\psi(T^{l_k^m-1}x)\cdots\psi(x)\cdot f(x)
\end{equation}

which can be rewritten as :

\begin{equation}
 f(T^{l^k_m}x)\cdot g_m^k=h_m^k\cdot f(x)
\end{equation}
\

and correspondingly:

\begin{equation}
 (f(T^{l_k^m}x)\cdot {f(x)}^{-1})\cdot (f(x) \cdot g^m_k \cdot {f(x)}^{-1})=h^m_k
\end{equation}
\

From equation 24 it follows that the set $\{x\in I^m: d(f(x), w^m)\geq\sqrt{\delta}\}$ has measure at most $\sqrt{\delta}|I^m|$. The property $P_2(\epsilon, m)$ guarantees that if $\delta$ is small enough then there is an $x\in I^m$ such that 
$d(f(T^{l_k^m}x), w^m)<\sqrt{\delta}$ and $d(f(x), w^m)<\sqrt{\delta}$ which implies $d(f(T^{l_k^m}x), f(x))<2\sqrt{\delta}$ 

\

As $d$ is a bi-invariant metric on $G$ and $d(f(T^{l_k^m}x), f(x))\leq 2\sqrt{\delta}$ for chosen $x$ then by choosing $k=1$ and $a_m=f(x)$ we complete the proof.

\end{proof}
\

Let $n\geq 2$. As usual having an irreducible IET $T$ and elements $g, h\in G^n$ we construct simple functions $\phi$, $\psi$.

\begin{Theorem} For almost all triples $(T, \phi, \psi)$ the $G$-extensions $T_{\phi}$ and $T_{\psi}$ are not measurably cohomologous.

\end{Theorem}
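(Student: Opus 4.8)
The plan is to combine the cohomological analogue of Theorem 4.4, namely Theorem 5.1, with the measure-zero/eventual-containment machinery developed in Lemmas 4.3, 5.1 and 5.2, exactly as Theorem 4.5 was deduced from Theorem 4.4 and Lemma 4.2. First I would argue that it suffices to prove the statement for a fixed permutation $\pi$ and for $n\geq 2$, working in ${\Delta}_{n-1}\times S^0_n\times G^n\times G^n$ with the product of Lebesgue and Haar measures; the word ``typical triple $(T,\phi,\psi)$'' is interpreted via this measure, where $\phi,\psi$ are the simple functions built from $g,h\in G^n$ respectively.

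The main argument proceeds by contradiction. Suppose $T_{\phi}$ and $T_{\psi}$ are measurably cohomologous for a positive-measure set of parameters $(\lambda,\pi,g,h)$. By Theorem 5.1, for a full-measure set $\Omega$ of IETs and every pair $(g,h)\in G^n\times G^n$, the existence of a measurable $f$ solving $(23)$ forces an infinite set $E(T)\subseteq{\mathbb{Z}}_+$ and elements $a^m\in G$ with $d(a^m g_1^m (a^m)^{-1}, h_1^m)\to 0$ along $m\in E(T)$, where $(g^m,h^m)=({\Gamma}^m(\lambda,\pi)g,{\Gamma}^m(\lambda,\pi)h)$ are the images under iterated Rauzy cocycle maps. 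By Lemma 5.2 this implies $d\bigl(({\Gamma}^m g,{\Gamma}^m h), S\bigr)\to 0$ along $E(T)$, where $S=\{(g,h)\in G^n\times G^n\mid (h_1,g_1)\in\overline{S}\}$ is the compact measure-zero set from Lemma 5.1. Hence it is enough to show that for every sequence of maps ${\Lambda}^m:G^n\times G^n\to G^n\times G^n$ of the form $({\Gamma}^m,{\Gamma}^m)$ — products of the measure-preserving Rauzy maps $A,B$ applied simultaneously in both coordinates — the set $D=\{(g,h)\mid d({\Lambda}^m(g,h),S)\to 0\}$ has $\nu\times\nu$-measure zero. For this I would introduce $S_p=\{(g,h)\mid d((g,h),S)<1/p\}$; then $D$ eventually lies in $S_p$ under the sequence ${\Lambda}^m$, so Lemma 4.3 (each ${\Lambda}^m$ is measure preserving since $A,B$ are, by Lemma 3.1) gives $\nu(D)\le\nu(S_p)$; since $S$ is compact, $S=\bigcap_p S_p$, the $S_p$ are decreasing, and $\nu(S_p)\to\nu(S)=0$, so $\nu(D)=0$. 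Applying Fubini over the base $(\lambda,\pi)$ then yields that the set of bad parameters has measure zero, the desired contradiction.

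One remaining point to address is that the Keynes–Newton-type reduction is not quite needed here: unlike weak mixing, cohomological non-equivalence is read off directly from the single obstruction of Theorem 5.1, so no separate treatment of one-dimensional versus higher-dimensional representations is required, and the case $n=2$ is automatically included. I would also remark that the argument only uses the first coordinate $(h_1,g_1)$ of the conjugacy obstruction, which is all that Theorem 5.1 delivers and all that is needed since Lemma 5.1 already forces a measure-zero set from that single pair.

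The hard part is essentially bookkeeping rather than a genuine obstacle: one must make sure that the ``simultaneous'' cocycle $(g,h)\mapsto({\Gamma}(\lambda,\pi)g,{\Gamma}(\lambda,\pi)h)$ is indeed measure preserving on $G^n\times G^n$ (immediate from Lemma 3.1, since it is a product of two copies of the same $\nu$-preserving map) and that the set $D$ really is ``eventually in $S_p$'' in the precise sense required by Lemma 4.3 — i.e. that $d((g,h),S)\to 0$ along a subsequence $E(T)$ that depends on the point must be upgraded, via a further subsequence argument exactly as in the proof of Theorem 4.5, to eventual containment for the truncated sequence. Since the Rauzy path (the sequence of choices $A$ or $B$) is itself a function of $(\lambda,\pi)$ only, after fixing $(\lambda,\pi)$ the sequence ${\Lambda}^m$ is deterministic and Lemma 4.3 applies verbatim; this is the step where one must be slightly careful but it mirrors the already-completed Theorem 4.5 line for line.
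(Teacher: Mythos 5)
Your proposal follows essentially the same route as the paper's own proof: contradiction via Theorem 5.1, passage to the compact measure-zero set $S$ through Lemmas 5.1 and 5.2, and then the eventual-containment argument with the sets $S_p$ and Lemma 4.3 applied to the (measure-preserving) products of Rauzy maps. Your additional care about restricting to the subsequence $E(T)$ (which depends only on $(\lambda,\pi)$, so the sequence of maps is deterministic once the base point is fixed) and about Fubini over the base is a welcome tightening of details the paper leaves implicit, but it is not a different argument.
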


\begin{proof} 

Assume that for some quadruple $(\lambda,\pi,g)\in{\Delta}_{n-1}\times S^0_n\times G^n\times G^n$ there exists a nonzero solution $f(x)$ to the equation (22). Then by Theorem 5.1 there exists an infinite set $E\subseteq{\mathbb{Z}}_+$ and a sequence $a^m\in G$, such that $d(a^m\cdot g^m_1\cdot {(a^m)}^{-1},h^m_1)\rightarrow 0$, for $m\in E$.  Then Lemma 5.2 implies that $d((g^m, h^m), S)\rightarrow 0$.
 \
 
 It is enough then to prove that for any sequence ${\Gamma}^m:G^n\times G^n\rightarrow G^n\times G^n$ consisting of direct squares of Rauzy maps $A\times A$ and $B\times B$, the set
 $D=\{(g, h)\in G^n\times G^n | d({\Gamma}^m(g), S)\rightarrow 0\}$ has measure zero. 
 \
 
 \
 
 Choose a positive integer $p$ and consider a set 
 $S_p=\{(g, h)\in G^n\times G^n | d((g, h), S)<1/p\}$. Then clearly the set $D$ is eventually in $S_p$ under the sequence ${\Gamma}^m$. So by Lemma 4.3 for any $p$, $\nu(D)\leq\nu(S_p)$. As the set $S$ is compact it implies that $S=\bigcap S_p$. As $S_p$ is a monotone sequence of sets, $\nu(S_p)\rightarrow\nu(S)=0$ and so $\nu(D)=0$.

\end{proof}

\subsection{Open problems}

\textbf{Problem 1.} Does the analog of Theorem 4.11 hold for any ergodic IET $T$ and any compact connected Lie group $G$? Namely if $T$ is an ergodic IET, is it true that for typical simple function $\phi$ the $G$-extension $T_{\phi}$ is ergodic?
\

\

\textbf{Problem 2.} Does the analog of Theorem 4.9 hold for any weakly-mixing IET $T$? Namely if $T$ is a weakly-mixing IET, is it true that for typical simple function $\phi$ the $G$-extension $T_{\phi}$ is weakly-mixing?
\

\

\textbf{Problem 3.} If the Lie group $G$ is noncompact, then there is no finite Haar measure. However there could be finite covolume lattices in unimodular Lie group $G$ and the question of typical weak-mixing/ ergodicity makes sense. For example is it true that for typical IET $T$ and typical simple function $\phi:[0, 1]\rightarrow SL(d,R)$ the skew-product $T_{\phi}:[0, 1]\times SL(d, \mathbb{R})/SL(d,\mathbb{Z})\rightarrow [0, 1]\times SL(d, \mathbb{R})/SL(d, \mathbb{Z})$ is ergodic? The question is intriguing as the Extended Rauzy-Veech Renormalization $\mathcal{R}$ is well-defined for noncompact Lie groups, however there is no known analog of Keynes-Newton criterion due to the presence of infinite-dimensional irreducible representations.
\

\

\textbf{Problem 4.} Does the analog of Theorem 5.2 hold for any Lie group $G$, not necessarily compact? The argument used in this paper can not be directly generalized due to the absence in general of a biinvariant metric \textbf{d} on $G$, however the left and right-invariant  metrics always exist and it might happen that some delicate refinement of the present argument would still yield the result.
\

\

\textbf{Problem 5.}  Does the analog of Theorem 5.2 actually hold for any ergodic transformation $T$? Namely let $G$ be a compact connected Lie group and $T:X\rightarrow X$ be an ergodic automorphism of the probability space $(X, \mu)$. Let also 

$X=X_1\sqcup\dots\sqcup X_n$ be a finite partition of $X$ onto measurable sets and $\phi: X\rightarrow G$ and $\psi:X\rightarrow G$ be \textit{simple functions} with respect to the partition. Is it true that  for typical pair $\phi$ and $\psi$ with respect to the Haar measure on $G$ the $G$-extensions $T_{\phi}$ and $T_{\psi}$ are not measurably cohomologous? If $G=U(1)$ this statement immediately follows from Theorem 4.10.

\newpage
\section{Appendix.}
\subsection{Simple approximation.}
\begin{defn} Let $M$ be a compact smooth manifold and $[0, 1]=I_1\sqcup\dots\sqcup I_n$ be a finite partition of the interval $[0, 1]$ on subintervals. A function $f:[0, 1]\rightarrow M$ is called \textbf{simple} if it is constant on each of the intervals 
$I_k$, $1\leq k\leq n$.

\end{defn}

\begin{Lemma} Let $M$ be a compact manifold with metric $d$ induced by some Riemannian metric $g$ and let $\phi: [0, 1]\rightarrow M$ be a measurable function and $0<\mu\leq 1$ be a fixed number. Then for any $\gamma>0$ there exists $\delta>0$ such that for any partition $\tau$ of the interval $[0, 1]$ on subintervals with $diam(\tau)<\delta$ and for any set $X\subseteq [0, 1]$, $\mu(X)\geq\mu$ consisting of elements of $\tau$ there exists an interval $J\in\tau$, $J\subseteq X$, and $c\in M$ such that ${\int}_{J}d(\phi(x), c)dx<\gamma|J|$
\end{Lemma}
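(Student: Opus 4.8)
The plan is to combine Lusin's theorem with a pigeonhole (averaging) argument over the blocks of the partition. Since $M$ is compact, the metric $d$ is bounded; write $D=\operatorname{diam}(M,d)$, and observe that when $\gamma\geq D$ the conclusion is trivial (any $c$ works), so we may assume $\gamma<D$. Throughout I write $|\cdot|$ for Lebesgue measure on $[0,1]$, consistent with the $|J|$ appearing in the statement.

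First I would apply Lusin's theorem to the measurable map $\phi:[0,1]\to M$: setting $\eta:=\gamma\mu/(4D)$, there is a compact set $K\subseteq[0,1]$ with $|[0,1]\setminus K|<\eta$ such that $\phi|_K$ is continuous, hence uniformly continuous. Uniform continuity then supplies $\delta_0>0$ with $d(\phi(x),\phi(y))<\gamma/2$ whenever $x,y\in K$ and $|x-y|<\delta_0$. I claim $\delta:=\delta_0$ is the required number.

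The heart of the argument is the following averaging step. Let $\tau$ be a partition with $\operatorname{diam}(\tau)<\delta$ and let $X$ be a union of blocks of $\tau$ with $|X|\geq\mu$. I would argue that at least one block $J\subseteq X$ satisfies $|J\setminus K|\leq\frac{\gamma}{2D}|J|$: otherwise, since the blocks of $X$ are pairwise disjoint and exhaust $X$, summing the reversed inequality over all of them would give $|X\setminus K|>\frac{\gamma}{2D}|X|\geq\frac{\gamma\mu}{2D}>\eta$, contradicting $X\setminus K\subseteq[0,1]\setminus K$. For such a block $J$, note $|J\cap K|\geq(1-\frac{\gamma}{2D})|J|>0$ (using $\gamma<D$), so $J\cap K\neq\varnothing$; pick $x_0\in J\cap K$ and put $c=\phi(x_0)$. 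Splitting $\int_J d(\phi(x),c)\,dx$ over $J\cap K$ and $J\setminus K$, applying uniform continuity on the first piece (legitimate since $\operatorname{diam}J<\delta_0$ and $x_0\in K$) and the crude bound $d\leq D$ on the second, yields a total strictly below $\frac{\gamma}{2}|J|+D\cdot\frac{\gamma}{2D}|J|=\gamma|J|$, as desired.

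The only point requiring care is that the block $J$ must be chosen inside $X$ while still being predominantly inside the Lusin set $K$; this is exactly what the averaging over blocks delivers, and it is what forces the measure deficit $\eta$ to be taken small in proportion to both $\mu$ and $\gamma/D$. Beyond this I do not anticipate any obstacle — the remaining estimates are routine.
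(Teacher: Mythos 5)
Your proof is correct, and it follows the same broad strategy as the paper (Lusin's theorem plus an averaging/pigeonhole argument over the blocks of $\tau$ contained in $X$), but the implementation differs in a way worth noting. The paper uses Lusin's theorem (after embedding $M$ in $\mathbb{R}^n$) to produce a \emph{simple} function $f$ with $\int_0^1 d(\phi(x),f(x))\,dx<\gamma\mu$, runs the pigeonhole on the quantity $\int_J d(\phi,f)$, and takes $c$ to be the constant value of $f$ on the good block $J$; this last step tacitly assumes that $J$ lies inside a single interval of constancy of $f$, which is not automatic since $\tau$ is an arbitrary partition of small mesh and its blocks may straddle the breakpoints of $f$ (one can repair this by discarding the boundedly many straddling blocks, whose total measure is $O(\delta)$, but the paper does not address it). You instead use Lusin's theorem to get uniform continuity of $\phi$ on a compact set $K$ of nearly full measure, run the pigeonhole on the quantity $|J\setminus K|/|J|$, and take $c=\phi(x_0)$ for a point $x_0\in J\cap K$. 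This sidesteps the alignment issue entirely, since uniform continuity is available on \emph{every} short block meeting $K$, and your bookkeeping (the choice $\eta=\gamma\mu/(4D)$, the split of $\int_J$ over $J\cap K$ and $J\setminus K$, and the reduction to $\gamma<D$, which is justified simply because the conclusion is monotone in $\gamma$) is all in order. The cost is the extra constant $D=\operatorname{diam}(M)$ in the estimates, which is harmless here because $M$ is compact.
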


Here $\mu(X)$ means the Lebesgue measure of the set $X$.

\begin{proof} By isometric embedding of $M$ into appropriate ${\mathbb{R}}^n$ and applying Lusin's theorem we may assume that for small enough $\delta>0$ there is a simple function $f:[0, 1]\rightarrow M$ such that 
${\int}_0^1d(\phi(x), f(x))dx<\gamma\mu$. Assume that for each interval $J\in\tau$, $J\subseteq X$ we have that ${\int}_{J}d(\phi(x), f(x))dx\geq\gamma|J|$. But then summing up for all such $J$ we have that
${\int}_{X}d(\phi(x), f(x))dx\geq\gamma\mu(X)=\gamma\mu$ which gives a contradiction.
\

That implies that there is an interval $J\in\tau$, $J\subseteq X$, such that ${\int}_{J}d(\phi(x), f(x))dx<\gamma|J|$. As $f(x)$ is a simple function, $f(x)=c$ on $J$ and so the conclusion follows.
\end{proof}

\subsection{Borel measurability of the generalized spectrum.}
\begin{defn}$\{$Generalized eigenvalue$\}$. Let $H_1, H_2,\dots H_n $ be separable Hilbert spaces and let 
  $T: H_1 \oplus H_2 \oplus\cdots\oplus H_n\rightarrow H_1 \oplus H_2 \oplus\cdots\oplus H_n $ be a bounded linear operator. Then a vector 
  $\lambda=(\lambda_1,\lambda_2,\dots , \lambda_n)\in {\mathbb{C}}^n$ is called a \textbf{generalized eigenvalue} if there exists 
  $x=(x_1, x_2,\dots , x_n) \in H_1 \oplus H_2 \oplus\cdots\oplus H_n$, $x\neq 0$ such that $Tx=({\lambda}_1 x_1, {\lambda}_2 x_2,\dots, {\lambda}_n x_n)$
\end{defn}

\begin{defn}$\{$Generalized point spectrum$\}$.  The \textbf{generalized point spectrum} $Sp(T)\subseteq  {\mathbb{C}}^n$ of $T$ is a set of all of its generalized eigenvalues.
\end{defn}

We now make a general characterization of the nontriviality of $Ker(T)$, where $T\in B(H)$ is a bounded operator on the separable Hilbert Space $H$.

\begin{Lemma} Assume that $P\subseteq U(H)$ is any dense countable subset of the unit sphere $U(H)=\{x\in H | ||x||=1\}\subseteq H$ and $T\in B(H)$. Then $Ker(T)\neq 0$ if and only if the following condition takes place. There exists an element $a\in P$, $a\neq 0$ and such that for any $k\in{\mathbb{Z}}_+$ there exists $b_k\in P$ such that:
\

\

$1) ||a-b||\leq \frac{1}{2}$ 
\

\

$2) T(b_k)\leq\frac{1}{k}$

\end{Lemma}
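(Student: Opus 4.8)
The plan is to characterize nontriviality of $\mathrm{Ker}(T)$ by a quantified condition that only references points of the countable dense set $P$, so that the kernel becomes a Borel condition when $T$ varies. First I would prove the ``only if'' direction. Suppose $\mathrm{Ker}(T)\neq 0$; pick a unit vector $v\in\mathrm{Ker}(T)$, so $Tv=0$. Since $P$ is dense in $U(H)$, choose $a\in P$ with $\|a-v\|<\tfrac14$; then in particular $\|a-v\|\le\tfrac12$. For each $k\in\mathbb{Z}_+$ I would produce $b_k\in P$ by again using density: pick $b_k\in P$ with $\|b_k-v\|$ so small that simultaneously $\|a-b_k\|\le\|a-v\|+\|v-b_k\|\le\tfrac12$ and $\|T(b_k)\|=\|T(b_k)-T(v)\|\le\|T\|\,\|b_k-v\|\le\tfrac1k$. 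This is possible because $\|T\|<\infty$, so shrinking $\|b_k - v\|$ below $\tfrac1{k\|T\|}$ (and also keeping it small enough for the first inequality) does the job. Hence the stated condition holds.

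For the ``if'' direction, suppose the condition holds with witness $a\in P$ and vectors $b_k\in P$ satisfying (1) and (2). The key observation is that $\|b_k\|=1$ for all $k$ (they lie on the unit sphere), so $\{b_k\}$ is a bounded sequence; by weak compactness of the closed unit ball of the separable Hilbert space $H$, some subsequence $b_{k_j}$ converges weakly to a vector $x\in H$. Since $T$ is bounded, $T b_{k_j}\rightharpoonup Tx$ weakly; but $\|T b_{k_j}\|\le\tfrac1{k_j}\to0$, so $T b_{k_j}\to 0$ in norm, hence $Tx=0$. It remains to check $x\neq 0$. Here I would use condition (1): each $b_k$ satisfies $\|a-b_k\|\le\tfrac12$, so $\mathrm{Re}\langle a,b_k\rangle = \tfrac12(\|a\|^2+\|b_k\|^2-\|a-b_k\|^2)\ge\tfrac12(1+1-\tfrac14)=\tfrac78$. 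Passing to the weak limit, $\mathrm{Re}\langle a,x\rangle\ge\tfrac78>0$ (the inner product against the fixed vector $a$ is weakly continuous), so $x\neq 0$. Therefore $x$ is a nonzero element of $\mathrm{Ker}(T)$.

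The main obstacle is the ``if'' direction: a naive attempt to take a norm limit of the $b_k$ fails, since the hypotheses only control $\|Tb_k\|$ and not any Cauchy-type behavior of $\{b_k\}$ themselves — indeed the $b_k$ need not converge in norm at all. The resolution is to pass to a weak limit, using separability of $H$ to extract a weakly convergent subsequence from the bounded sequence $\{b_k\}$, then exploit that $T$ is weak-to-weak continuous while $Tb_k\to 0$ strongly, and finally that the lower bound (1) survives under weak limits because pairing against the fixed vector $a$ is weakly continuous. Conditions (1) and (2) are engineered precisely so that (2) forces the weak limit into $\mathrm{Ker}(T)$ and (1) forces it to be nonzero. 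One should also note the constant $\tfrac12$ in (1) is not sharp — any constant strictly less than $\sqrt2$ would serve — but $\tfrac12$ is convenient and is what later applications will use.
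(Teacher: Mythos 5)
Your proof is correct and follows essentially the same route as the paper: the nontrivial direction is handled by extracting a weak limit of the $b_k$ from the unit ball, using weak-to-weak continuity of $T$ together with $\|Tb_k\|\to 0$ to place the limit in $\mathrm{Ker}(T)$, and pairing against the fixed vector $a$ to show the limit is nonzero (the paper merely asserts $\langle b,a\rangle\ge\tfrac12$ where you compute the sharper $\tfrac78$). The converse direction, which you spell out via density and boundedness of $T$, is the one the paper dismisses as immediate.
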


\begin{proof} As \textbf{if} direction of the Lemma is immediate, we only prove the \textbf{only if} direction. Consider the sequence $\{b_k\}$ satisfying (1) and (2). As $U(H)$ is \textit{weakly compact} then by taking a subsequence we may assume that 
for some $b\in H$, $b_k\rightarrow b$ \textit{weakly}. Now $b\neq 0$ because if $\Psi: H\rightarrow\mathbb{C}$ is a linear functional given by formula $\Psi(x)=\langle x, a\rangle$, then condition (1) implies that 
$\Psi(b)=\lim\Psi(b_k)\geq\frac{1}{2}$.
\

\

As any $T\in B(H)$ is a continuous linear map in the \textit{weak} topology on $H$, $T(b_k)\rightarrow T(b)$ \textit{weakly} and as $T(b_k)\rightarrow 0$ \textit{strongly} by condition (2) it follows that $T(b)=0$ as the strong convergence implies weak convergence on $H$.

\end{proof}

\begin{Lemma}

For any bounded operator $T$, $Sp(T)$ is a Borel subset of ${\mathbb{C}}^n$.

\end{Lemma}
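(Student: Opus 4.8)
The plan is to express $Sp(T)$ as a countable Boolean combination of closed subsets of ${\mathbb{C}}^n$, which makes it Borel automatically. First I would reduce membership in $Sp(T)$ to the nonvanishing of a single kernel: for $\lambda=(\lambda_1,\dots,\lambda_n)\in{\mathbb{C}}^n$ let $D_\lambda\in B(H)$ be the diagonal operator $D_\lambda(x_1,\dots,x_n)=(\lambda_1x_1,\dots,\lambda_nx_n)$ and put $T_\lambda=T-D_\lambda$, so that directly from the definition of a generalized eigenvalue, $\lambda\in Sp(T)$ if and only if $Ker(T_\lambda)\neq 0$. The one elementary point to record is that for each fixed $b=(b_1,\dots,b_n)\in H$ the map ${\mathbb{C}}^n\to H$, $\lambda\mapsto T_\lambda(b)=T(b)-(\lambda_1b_1,\dots,\lambda_nb_n)$, is continuous (in fact Lipschitz, since $\|D_\lambda b-D_\mu b\|^2=\sum_i|\lambda_i-\mu_i|^2\|b_i\|^2$), hence $\lambda\mapsto\|T_\lambda(b)\|$ is a continuous function on ${\mathbb{C}}^n$.

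Next I would fix, once and for all, a countable dense subset $P\subseteq U(H)$ of the unit sphere, which exists because $H$, being a finite direct sum of separable Hilbert spaces, is separable; note that every element of $P$ is automatically nonzero. Applying the preceding Lemma to the operator $T_\lambda$, separately for each $\lambda$, gives the characterization: $\lambda\in Sp(T)$ if and only if there is $a\in P$ such that for every $k\in{\mathbb{Z}}_+$ there is $b\in P$ with $\|a-b\|\leq 1/2$ and $\|T_\lambda(b)\|\leq 1/k$. Writing $E_{b,k}=\{\lambda\in{\mathbb{C}}^n:\|T_\lambda(b)\|\leq 1/k\}$, unwinding the three quantifiers turns this into the set identity
\[
Sp(T)=\bigcup_{a\in P}\ \bigcap_{k=1}^{\infty}\ \bigcup_{\substack{b\in P\\ \|a-b\|\leq 1/2}}E_{b,k}.
\]

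Finally, by the continuity observed in the first step each $E_{b,k}$ is a closed subset of ${\mathbb{C}}^n$, hence Borel; and since $P$ is countable, every union and intersection in the displayed formula ranges over a countable index set, so $Sp(T)$ is Borel. I do not expect a genuine obstacle here: the only steps that need a moment's care are the continuity of $\lambda\mapsto\|T_\lambda(b)\|$, the bookkeeping that converts the nested $\exists a\,\forall k\,\exists b$ of the preceding Lemma into the displayed countable union/intersection/union, and the remark that separability of $H$ supplies the countable dense set $P$.
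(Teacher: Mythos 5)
Your proof is correct and follows essentially the same route as the paper: reduce to $Ker(T_\lambda)\neq 0$, invoke the preceding lemma with a countable dense subset of the unit sphere, and write $Sp(T)$ as a countable union--intersection--union of sets cut out by the continuous functions $\lambda\mapsto\|T_\lambda(b)\|$. The only cosmetic difference is that you use non-strict inequalities (closed sets $E_{b,k}$) where the paper uses strict ones (open sets $S_{pkq}$); both yield Borel sets and the quantifier bookkeeping is identical.
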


\begin{proof} For $\lambda\in{\mathbb{C}}^n$ let $T_{\lambda}:H_1 \oplus H_2 \oplus\cdots\oplus H_n\rightarrow H_1 \oplus H_2 \oplus\cdots\oplus H_n $ be defined by formula
$T_{\lambda}(x_1,\dots, x_n)=T(x)-({\lambda}_1 x_1,\dots, {\lambda}_n x_n)$. In these notations $Sp(T)=\{\lambda\in{\mathbb{C}}^n | Ker(T_{\lambda})\neq 0\}$.
\

\

Let us now choose some dense sequence $\{b_k\}\subseteq U(H)$. 
For a triple $(p, k, q)\in {\mathbb{Z}}_+^3$ consider the set $S_{pkq}=\{\lambda\in{\mathbb{C}}^n: ||b_p-b_q||<\frac{1}{2}$ and $||T_{\lambda}(b_q)||<\frac{1}{k}\}$.
 By Lemma 6.2 we can write:
 \

\

 $Sp(T)=\{ \lambda\in {\mathbb{C}}^n |\exists p\forall k\exists q$ such that: $||b_p-b_q||<\frac{1}{2}$ and $||T_{\lambda}(b_q)||<\frac{1}{k}\}$,
 \
 
 \
 
  which can be rewritten as $Sp(T)=\bigcup\limits_{p}\bigcap\limits_{k}\bigcup\limits_{q} S_{pkq}$. As $S_{pkq}\subseteq{\mathbb{C}}^n$ is clearly open, the conclusion of Lemma follows.
\end{proof}

\newpage

\end{document}